      \def\sC{{\mathfrak C}}
\def\sD{{\mathfrak D}}      
   \def\sH{{\mathfrak H}}   
      \def\sL{{\mathfrak L}}
\def\sM{{\mathfrak M}}   \def\sN{{\mathfrak N}}
      \def\dC{{\mathbb C}}
\def\dD{{\mathbb D}}
   \def\dN{{\mathbb N}}   
      \def\dR{{\mathbb R}}
   \def\cB{{\mathcal B}}   
\def\cD{{\mathcal D}}      
   \def\cH{{\mathcal H}}   
      \def\cL{{\mathcal L}}
\def\cM{{\mathcal M}}   \def\cN{{\mathcal N}}
   \def\bB{{\mathbf B}}
\def\cRS{\mathcal{RS}}
\def\half{{\frac{1}{2}}}
\def\ran{{\rm ran\,}}
\def\cran{{\rm \overline{ran}\,}}
\def\dom{{\rm dom\,}}
\def\mul{{\rm mul\,}}
\def\cdom{{\rm \overline{dom}\,}}
\def\clos{{\rm clos\,}}
\def\dim{{\rm dim\,}}
\def\codim{{\rm codim\,}}
\def\uphar{{\upharpoonright\,}}
\def\f{\varphi}
\newtheorem{theorem}{Theorem}[section]
\newtheorem{lemma}[theorem]{Lemma}
\newtheorem{proposition}[theorem]{Proposition}
\newtheorem{corollary}[theorem]{Corollary}
\newtheorem{definition}[theorem]{Definition}
\newtheorem{remark}[theorem]{Remark}
\numberwithin{equation}{section}
\def\wt{\widetilde}
\begin{document}
\title[Shorting, parallel addition and form sums]
{Shorting, parallel addition and form sums of nonnegative selfadjoint linear relations}
\author[Yu.M.~Arlinski\u{\i}]{Yu.M.~Arlinski\u{\i}}
\address{Volodymyr Dahl East Ukrainian National University \\
pr. Central 59-A, Severodonetsk, 93400, Ukraine}
\email{yury.arlinskii@gmail.com}

\subjclass[2010]{47A06, 47A64, 47A20}

\keywords{Nonnegative selfadjoint linear relation, shorting, parallel addition, harmonic mean, arithmetic--harmonic mean, selfadjoint extension}

\vskip 1truecm
\thispagestyle{empty}
\baselineskip=12pt


\begin{abstract}
We extend the operations of shorting and parallel addition from the cone of bounded nonnegative selfadjoint operators in a Hilbert space to the set of all nonnegative selfadjoint linear relations. New properties of these operations and connections with the Cayley transforms are established. It is shown that for a pair of nonnegative selfadjoint linear relations there exist, in general, two arithmetic--harmonic means. Applications of the arithmetic, harmonic, and arithmetic--harmonic means to the theory of nonnegative selfadjoint extensions of nonnegative symmetric linear relations are given.
\end{abstract}
\maketitle


\section{Introduction}
We will use the following notations: $\dom A$, $\ran A$, and $\ker A$ are the domain, the range, and the kernel of a linear operator/linear relation $A$, $\cran A$ and $\clos{\cL}$ denote the closure of $\ran A$ and of the set $\cL$, respectively. By ${\rm s}-\lim$ we denote the strong limit of operators and ${\rm s-R-\lim}$ means the strong resolvent limit of operators/linear relations \cite{BHSW2010, Ka}. The Banach space of all bounded operators acting between Hilbert spaces $\cH_1$ and $\cH_2$ is denoted by $\bB(\cH_1,\cH_2)$  and $\bB(\cH):=\bB(\cH,\cH)$.
  The cone of all bounded self-adjoint nonnegative operators in a complex Hilbert space $\cH$ is denoted by $\bB^+(\cH)$. If $A_1,$ $A_2$ are two bounded selfadjoint operators in $\cH$ and the difference $A_1-A_2$ belongs to $\bB^+(\cH)$, then we write $A_1\ge A_2$.
If $\cL$ is a subspace (closed linear manifold) in $\cH$, then $P_\cL$ is the orthogonal projection in $\cH$ onto $\cL$, and $\cL^\perp\stackrel{def}{=}\cH\ominus\cL$.
$\dN$ is the set of natural numbers, $\dN_0:=\dN\cup\{0\}$, $\dC$ is the field of complex numbers, $\dR_+:=[0,\infty)$.
\vskip 2 cm
Let $\cH$ be a complex Hilbert space and let $S\in \bB^+(\cH)$. It was discovered by M.G.~Kre\u{\i}n \cite{Kr} that for an arbitrary subspace $\cL$ of $\cH$ the set
\begin{equation}
\label{maxprob}
\Xi(S,\cL):=\left\{\,\wt S\in\bB^+(\cH):\,
    \wt S\le S, \, {\ran}\wt S\subseteq{\cL}\,\right\}
\end{equation}
has a maximal element $S_{\cL}$.
The operation $\left<S,\cL\right>\mapsto S_\cL$ was applied in \cite{Kr} to the problem of a description contractive selfadjoint extensions of a non-densely defined Hermitian contraction.
The following representations of $S_\cL$ and associated quadratic form were established in \cite{Kr}:
\begin{equation}\label{Krform1}
 S_{\cL}=S^\half P_{\sM}S^\half,\; \left(S_{\cL}f, f\right)=\inf\limits_{\f\in {\cL^\perp}}\left\{\left(S(f + \varphi),f +
 \varphi\right)\right\},
\quad  f\in\cH,
\end{equation}
 where $P_{\sM}$ is the orthogonal projection in $\cH$ onto the subspace $\sM=\left\{f\in\cH:S^\half f\in\cL\right\}.$
Besides \cite{Kr}
\begin{equation}\label{shintr}
 {\ran}S_{\cL}^\half={\ran}S^\half\cap{\cL},\; S_{\cL}=0 \iff  \ran S^\half\cap \cL=\{0\}.
\end{equation}
A bounded selfadjoint operator
$S$ admits the block operator matrix representation w.r.t. the orthogonal decomposition $\cH=\cL^\perp\oplus\cL$:
\[
S=\begin{bmatrix}S_{11}&S_{12}\cr S^*_{12}&S_{22}
\end{bmatrix}:\begin{array}{l}\cL^\perp\\\oplus\\\cL \end{array}\to
\begin{array}{l}\cL^\perp\\\oplus\\\cL \end{array},
\]
where $S^*_{11}=S_{11},$ $S^*_{22}=S_{22}.$
It is well known (see e.g. \cite{Shmul}) that
\begin{equation}\label{POZ}
S\ge 0\Longleftrightarrow \left\{\begin{array}{l}S_{11}\ge 0,\; \ran S_{12}\subset\ran S^\half_{11},\\
S_{22}-\left(S^{[-\half]}_{11}S_{12}\right)^*\left(S^{[-\half]}_{11}S_{12}\right)\ge 0
\end{array}\right.,
\end{equation}
and the operator $S_\cL$ is given by the block matrix
\begin{equation}
\label{shormat1}
S_\cL=\begin{bmatrix}0&0\cr
0&S_{22}-
\left(S^{[-\half]}_{11}S_{12}\right)^*\left(S^{[-\half]}_{11}S_{12}\right)\end{bmatrix},
\end{equation}
 where $S^{[-\half]}_{11}$ is the Moore-Penrose pseudo-inverse.
 If $S^{-1}_{11}\in\bB(\cL^\perp)$,  then the operator $S_{22}-
\left(S^{[-\half]}_{11}S_{12}\right)^*\left(S^{[-\half]}_{11}S_{12}\right)$ takes the form $S_{22}-S^*_{12}S^{-1}_{11}S_{12}$ and is said to be the \textit{Schur complement} of $S$ w.r.t. $\cL$ \cite{Zhang}.
From \eqref{shormat1} it follows that
\[
S_\cL=0\iff \ran S_{12}\subset\ran
S^\half_{11}\quad\mbox{and}\quad S_{22}=S^*_{12}S^{-1}_{11}S_{12}.
\]
The function $\bB^+(\cH)\ni S\mapsto S_\cL$ has the following property \cite{Shmul}: in the strong convergence sense
\begin{equation}\label{ljfd1}
\bB^+(\cH)\supset\{S_n\}\searrow S_\infty\Longrightarrow (S_n)_\cL\searrow (S_\infty)_\cL
\end{equation} 

For the case of a nonnegative $S$ acting on a finite-dimensional Hilbert space $\cH$, the operator $S_\cL$ in \eqref{shormat1}
was called by W.N.~Anderson in \cite{And} the \textit{shorted operator} due to the its connection with electrical network theory. It was shown \cite[Theorem 1]{And} that if the shorted operator is defined by such a way, then it is the maximal element of the set $\Xi(S,\cL)$ defined in \eqref{maxprob}. Basic properties of $S_{\cL}$ were studied in \cite{And, AD, AT,FW, Kr, KrO, Morley, NA, P, P_2014, Shmul}.

 The
\textit{parallel sum} $B:G$ for $B,G\in \bB^+(\cH)$  for the case of finite-dimensional $\cH$ was defined in \cite{AD} as follows
\[
B:G=B(B+G)^{[-1]}G,
\]
where $(B+G)^{[-1]}$ is the Moore-Penrose pseudo-inverse. In the general case of $\dim\cH\le\infty$ the operator $B:G$ is of the form
\begin{equation}\label{bouinv}
B:G=(B^{-1}+G^{-1})^{-1},
\end{equation}
if both $B$ and $G$ have bounded inverses and if this is not the case, then
\begin{multline*}
B:G={\rm s}-\lim\limits_{\varepsilon\downarrow 0}(B+\varepsilon I):(G+\varepsilon I)={\rm s}-\lim\limits_{\varepsilon\downarrow 0}\left((B+\varepsilon I)^{-1}+(G+\varepsilon I)^{-1}\right)^{-1}\\
={\rm s}-\lim\limits_{\varepsilon\downarrow 0}\,
B\left(B+G+\varepsilon I\right)^{-1}G.
\end{multline*}

In \cite{FW} the following important property of the parallel sum is established:
\begin{equation}\label{khfy}
\ran(B:G)^{\half}=\ran B^\half\cap\ran G^\half.
\end{equation}

The quadratic form $((B:G)h,h)$ admits the representation
\[
\left((B:G)\f,\f\right)
=\inf_{f,g \in \cH}\left\{\,\left(Bf,f\right)+\left(Gg,g\right):\,
   \f=f+g \,\right\} \ ,
\]
see \cite{AT, Ando1976, FW,PSh}. It follows that $B:G\le B$ and $B:G\le G$.
As is known \cite{PSh}, $B:G$ can be calculated as follows
\[
B:G=B-\left((B+G)^{[-1/2]}B\right)^*\left((B+G)^{[-1/2]}B\right),
\]
i.e., the parallel addition can be expressed \cite{AT} by means of the shorting:
$$
B:G=\left(\begin{bmatrix}B+G&B\cr B&B \end{bmatrix}\right)_{\{0\}\oplus\cH}\uphar {\{0\}\oplus\cH}.$$
The latter representation leads to the following statement, see \cite[Theorem 2.5]{PSh}:
\begin{multline}\label{ljbfd2}
\bB^+(\cH)\supset\{B_n\}, \{G_n\},\;B_n\searrow B_\infty,\;G_n\searrow G_\infty\quad \mbox{in the strong convergence sense}\\
\Longrightarrow {\rm s}-\lim\limits_{n\to\infty}(B_n:G_n)=B_\infty:G_\infty.
\end{multline}
Another connections between shorting and parallel addition were established in \cite{AT} and are given by the following equalities
\begin{equation}\label{interr}
B_\cL={\rm s}-\lim\limits_{t\uparrow +\infty}(B:tP_\cL),
\end{equation}
\begin{equation}\label{totjlyj}
(B:G)_\cL=B_\cL:G=B:G_\cL=B_\cL: G_\cL.
\end{equation}

Various applications of shorting and parallel addition in complex analysis, operator theory, probability and statistics, numerical analysis are scattered in the literature. For matrices some of applications can be found in the books \cite{MBM,Zhang}.
Generalizations of the shorting and parallel additions to the cases of bounded operators, acting  between two Hilbert spaces, on linear spaces, and on Kre\u{\i}n spaces, are given in  \cite{ACS, CMM, FGK, MP}.
The operation of parallel addition has been extended to quadratic forms in \cite{HSdeS2009, HSdeS2010, Tarcsay2013, Tarcsay2015}.

In this article the shorting and parallel addition are defined and studied for nonnegative selfadjoint linear relations (l.r. for short) and, in particular, for unbounded nonnegative selfadjoint operators. We define the shorting $A_\cL$ similarly to Kre\u{\i}n's definition \eqref{maxprob} for a bounded nonnegative selfadjoint operator. The parallel addition of two nonnegative selfadjoint l.r. we define as in \eqref{bouinv}, replacing the sum by the \textit{form sum}.
Recall that the form sum $A\dot+B$ of nonnegative selfadjoint l.r. $A$ and $B$ is the nonnegative selfadjoint l.r. associated with the closed quadratic form
${\mathfrak g}[\f]=||A^\half \f||^2+||B^\half \f||^2$, $\f\in \dom A^{\half}\cap\dom B^{\half}$ \cite{FM,HSSW06,Ka}. We show, see Theorem \ref{ythdj}, that the inclusion
$\ran A^\half+\ran B^\half\subseteq\ran(A\dot+B)^{\half}$ holds.
In the sequel the nonnegative selfadjoint l.r. $(A\dot+B)/2$ will be called the \textit{arithmetic mean} of $A$ and $B$.
The \textit{harmonic mean} $h(A,B)$ of $A$ and $B$ (in  \cite{K-A} the notation $A!B$ was used) is defined as follows
$$h(A,B)=2(A:B)=\left(\half(A^{-1}\dot +B^{-1})\right)^{-1}.$$
For $A,B\in\bB^+(\cH)$ the inequality $h(A,B)\le (A+B)/2$ is valid \cite{Ando1978}.

For nonnegative selfadjoint l.r. we show that equalities \eqref{shintr}, \eqref{khfy}, \eqref{totjlyj} remain valid and some new properties of shorting and parallel additions we establish in Theorems \ref{shortrel}, \ref{shortre2}, \ref{Frmsm}.

There is a one-to-one correspondence between nonnegative selfadjoint l.r. $L$ and selfadjoint contractions $T$ given by the Cayley transform:
\begin{equation}\label{rtkbtr}
\begin{array}{l}
L\mapsto {{\mathfrak C}}(L)=T:=-I+2(I+L)^{-1}\in [-I, I],\\[3mm]
[-I, I]\ni T\mapsto {{\mathfrak C}}(T)=L:= -I+2(I+T)^{-1}=\left\{\{(I+T)f, (I-T)f\}:f\in\cH\right\},\\[3mm]
\qquad\qquad {{\mathfrak C}}({{\mathfrak C}}(L))=L,\; {{\mathfrak C}}(L^{-1})=-T=-{{\mathfrak C}}(L).
\end{array}
\end{equation}
In Theorem \ref{shortrel} and Theorem \ref{vspom2} we establish the following equalities, connecting the Cayley transforms:
\[
\begin{array}{l}
{{\mathfrak C}}(A_\cL)=I-(I-{{\mathfrak C}}(A))_\cL,\\[3mm]
{\mathfrak C}\left(\half(A\dot +B)\right)= h(I+{\mathfrak C}(A),I+{\mathfrak C}(B))-I,\\[3mm]
{\mathfrak C}\left(h(A,B)\right)= I-h(I-{\mathfrak C}(A),I-{\mathfrak C}(B)),
\end{array}
\]
where $A$ and $B$ are nonnegative selfadjoint l.r. and $\cL$ is a subspace.
Using the above equalities and replacing the strong convergence by the \textit{strong resolvent convergence}, in Proposition \ref{connon} and Corollary \ref{cylbv} we extend properties \eqref{ljfd1}, \eqref{ljbfd2}, \eqref{interr} to the set of all nonnegative selfadjoint l.r..

The mean $c_0(a,b):=\cfrac{a+b+2ab}{2+a+b}$ of positive numbers $a$ and $b$ satisfies the inequalities
\[
\cfrac{2ab}{a+b}\le \cfrac{a+b+2ab}{2+a+b}\le \cfrac{a+b}{2}
\]
with $"="$ if and only if $a= b$.
Using the Cayley transforms, the mean $c_0$ can be extended to arbitrary nonnegative selfadjoint l.r. $A$ and $B$:
\[
c_0(A,B):=\mathfrak C\left(\cfrac{\mathfrak C(A)+\mathfrak C(B)}{2}\right).
\]
We prove in Theorem \ref{ythdj} analogues of the above inequalities:
\[
 h(A, B)\le c_0(A,B)\le\cfrac{A\dot+B}{2}
\]
with $"="$ if and only if $A=B$.

The \textit{geometric mean} $A\#B$ for $A,B\in\bB^+(\cH)$ can be defined as follows \cite{Ando1978, K-A, PuWo}:
\[
A\#B:=\left\{\begin{array}{l}\max\left\{X\in\bB^+(\cH):|(X\f,\psi)|\le ||A^\half\f||\,||B^\half \psi||\;\;\f,\psi\in \cH\right\},\\
A^\half\left(A^{-\half}BA^{-\half}\right)^{\half}A^\half,\; \mbox{if} \;A^{-1}\;\mbox{and}\; B^{-1}\;\mbox{are bounded}
\end{array}\right..
\]
For commuting $A$ and $B$ one has $A\#B=(AB)^\half.$
The following definition \cite{AMT} of geometric mean as the arithmetic--harmonic mean is a straightforward generalization of this notion for positive numbers.
Define
\[
A_0:=A,\; B_0:=B,\; A_{n}:=\half(A_{n-1}+B_{n-1}),\; B_{n}:=h(A_{n-1},B_{n-1}),\; n\in\dN.
\]
Then $\{A_n\},$ $\{B_n\}$ are non-increasing and non-decreasing sequences, respectively, and they have the common strong limit $g(A,B)$, which coincides with $A\#B$ and thus
\[
h(A,B)\le A\#B\le \half(A+B).
\]
 In Section \ref{fhbaufh} we define similar sequences for nonnegative selfadjoint l.r. $A$ and $B$ and show that, in general, there is no a common strong resolvent  limit, i.e., there are two arithmetic-harmonic means $ah(A,B)$.

According to Kre\u{\i}n's results \cite{Kr} and their generalizations in \cite{AN, Ar4, CS,HMS,HSSW06, HSSW07} (see Section \ref{EXTEN}), the Cayley transform of the set of all nonnegative selfadjoint extensions of a nonnegative symmetric l.r. forms an operator interval. In Theorem \ref{ext}, in particular, we find parameters corresponding to the arithmetic and harmonic means of given two nonnegative selfadjoint extensions, in Proposition \ref{extrah} we show that the pair of arithmetic--harmonic means $ah(\wt S_1,\wt S_2)$ for two extremal nonnegative selfadjoint extensions $\wt S_1$, $\wt S_2$ of a nonnegative symmetric operator or linear relation consists of $\half(\wt S_1\dot+\wt S_2)$ and $h(\wt S_1,\wt S_2)$. In Proposition \ref{jcnfy} we show for differential operators in $\cL_2(\dR_+)$
\[
\left\{\begin{array}{l}L_cf=-\cfrac{d^2f}{d x^2}\\
\dom L_c=\left\{f\in W_2^2(\dR_+),\; f'(0)=c f(0)\right\},\; c\in[0,+ \infty],\;W_2^2(\dR_+)\;\mbox{is the Sobolev space}
\end{array}\right.
\]
 the validity of the equalities
\[\begin{array}{l}
\half(L_c\dot+L_d)=L_{\half(c+d)},\; h(L_c, L_d)=L_{\frac{2 c d}{c+d}}, \;ah(L_c,L_d)=L_{\sqrt{cd}},\\[2mm]
\half(L_0\dot+L_\infty)=L_{\infty},\;
 h(L_0, L_\infty)=L_{0}, \;ah(L_\infty,L_0)=\left<L_{\infty}, L_0\right>.
  \end{array}
\]

 In the case of a nonnegative symmetric l.r. having one-dimensional resolvent difference of its Friedrichs and Kre\u{\i}n extensions, we calculate in Theorem \ref{yfltdct}
  parameters corresponding to the resolvents of the arithmetic--harmonic means of any two nonnegative selfadjoint extensions.
 \section{Nonnegative selfadjoint linear relations}
 Let $\cH$ be a Hilbert space and let
\[
J_\cH:\{f,f'\}\mapsto \{if',-if\},\;f,f'\in\cH.
\]
Then $J_\cH$ is selfadjoint and unitary operator in the Hilbert space $\cH^2=\cH\oplus \cH.$
Let $ A$ be a l.r. in $\cH$ \cite{Ar}, i.e., $A$ is a closed linear manifold (a subspace) in the Hilbert space $\cH^2$. Recall that the l.r.
$ A^*:=\cH^2\ominus J_\cH A$
is called the adjoint to $ A$.
If  $ A=\{\{{\f},{\f}'\}\}$, 
then, clearly,
\[
 A^*=\left\{\{{g},{ g}'\}:({ \f}',{ g})=({ \f},{ g}')\;\forall\; \{{ \f},{ \f}'\}\in A\right\}.
\]
 A l.r. $ A$ is called selfadjoint if $ A^*= A$ $\iff$ $ A=\cH^2\ominus J_\cH A$.
 The domain, range, kernel and multi-valued part of a l.r. $A$ are defined as follows:
 \[
 \begin{array}{l}
 \dom A=\left\{f\in\cH:\{f,f'\}\in A\right\}, \;\ran A=\left\{f'\in\cH:\{f,f'\}\in A\right\},\\[3mm]
 \ker A=\left\{f\in\cH:\{f,0\}\in A\right\},\;\mul A=\left\{f'\in\cH:\{0,f'\}\in A\right\}.
\end{array}
 \]
The l.r. $A^{-1}:=\left\{\{f',f\}:\{f,f'\}\in A\right\}$ is called the inverse to $A$.
Clearly
\[
\dom A^{-1}=\ran A, \;\ran A^{-1}=\dom A,\; \ker A^{-1}=\mul A,\;\mul A^{-1}=\ker A.
\]
Recall that a symmetric (selfadjoint) l.r. $A$
admits the orthogonal decomposition 
$A={\rm Graph}(A_{o})\bigoplus \{\{0\}\oplus\mul A\},$
 where $A_{o}$ is the symmetric (selfadjoint) operator part
acting on $\cdom A$ ($\cdom A =\cH\ominus \mul A$ when $A$ is selfadjoint).

The resolvent $(A-\lambda I)^{-1}$ of a selfadjoint l.r. $A$ is defined for all regular points $\rho(A_o)$ of the operator part $A_o$ and takes the form
\[
(A-\lambda I)^{-1}f=(A_o-\lambda I)^{-1}P_Af,\; f\in\cH,
\]
where $P_A$ is the orthogonal projection onto ${\cdom A}$. We will identify $\rho(A)$ with $\rho(A_o)$.

If $\cL$ is a subspace of $\cH$, then $P_\cL(A-\lambda I)^{-1}\uphar\cL$
is called the compressed resolvent of $A$. The holomorphic family $\cM_{A,\cL}$ of l.r. 
\begin{equation}\label{nevfam}
\cM_{A,\cL}(\lambda):=-\left(P_\cL(A-\lambda I)^{-1}\uphar\cL\right)^{-1}-\lambda I_\cL,\; \lambda\in\rho(A_o)
\end{equation}
is a Nevanlinna function or a Nevanlinna family \cite{DM1991,DHMS06}.
Note that
\begin{equation}\label{jdhfn}
\cM_{A^{-1},\cL}(\lambda)=\left(\cM_{A,\cL}(\lambda^{-1})\right)^{-1},\;\lambda\in\rho(A)\setminus\{0\}.
\end{equation}
If a bounded selfadjoint operator $A$ in the Hilbert space $\cH=\cL^\perp\oplus\cL$ is given by the block operator matrix
\[
A=\begin{bmatrix}A_{11}&A_{12}\cr A^*_{12}&A_{22}  \end{bmatrix},\;A_{11}=A^*_{11},\;A_{22}=A^*_{22},
\]
then $\bB(\cL)$-valued function
\begin{equation}\label{nevbound}
\cM_{A, \cL}(\lambda):=-A_{22}+A_{12}^*\left(A_{11}-\lambda I_{\cL^\perp}\right)^{-1}A_{12},\;\lambda\in\rho(A_{11})
\end{equation}
is a Nevanlinna function and
\[
P_{\cL}\left(A-\lambda I_\cL\right)^{-1}\uphar\cL=-\left(\cM_{A,\cL}(\lambda)+\lambda I_\cL\right)^{-1},\; \lambda\in\rho(A)\cap\rho(A_{11}).
\]

In the sequel the notation $A\uphar\cL$ is used for the l.r.
\[
A\uphar\cL:=\left\{\{f,f'\}: f\in\dom A\cap \cL,\;\{f,f'\}\in A\right\}.
\]
It is established in \cite{Stenger} that if $A$ is a selfadjoint unbounded linear operator and $\cL$ is a \textit{subspace with finite codimension}, then
the operator $P_\cL A\uphar\cL$ is selfadjoint in the space $\cL$. In \cite{Nud} has been proved that the compression $P_\cL A\uphar\cL$ of a maximal dissipative operator $A$ is maximal dissipative in $\cL$. Further in \cite{ADW} similar assertions were obtained for selfadjoint and maximal dissipative l.r..

Let $A=\left\{\{f,f'\}\right\}$ be a symmetric l.r.
in the Hilbert space $\cH$. Then for any $\{f,f'\}\in A$ one has the equality \cite{RoBe}
\[
(f',f)=(A_o f,f).
\]
A l.r. $A$ is called  nonnegative (we will write $A\ge 0$) if $(f',f)\ge 0$ for all $\{f,f'\}\in A$.

If $A$ is a nonnegative  selfadjoint l.r. then the square root $A^\half$ is defined as follows
$$A^{\half}={\rm Graph}(A_{o}^{\half})\bigoplus \{\{0\}\oplus \mul A\}\},\; \ran A^{\half}= \ran A_{o}^{\half}\oplus\mul A.$$
Hence
$$A^{-\half}=\{\{A_{o}^{\half}f,f\},\; f\in\dom A^\half_o\}\bigoplus \{\{\mul A\oplus\{0\}\}.$$
Let ${\mathfrak a}={\mathfrak a}[\cdot,\cdot]$ be a nonnegative sesquilinear form in the Hilbert
space $\sH$ with domain $\dom {\mathfrak a}$ and let ${\mathfrak a}[\f]:={\mathfrak a}[\f,\f]$, $\f \in \dom {\mathfrak a}$. The form ${\mathfrak a}$ is
\textit{closed} \cite{Ka} if
\[
\left\{ \begin{array}{l}\lim\limits_{n\to \infty}\f_n=\f, \quad \lim\limits_{n,m\to\infty}{\mathfrak a}[\f_n-\f_m]=0,\\
\quad \{\f_n\}\subset \dom {\mathfrak a}\end{array}\right.\Longrightarrow \f \in \dom {\mathfrak a},\;\lim\limits_{n\to\infty}{\mathfrak a}[\f_n-\f]=0.
\]
The form ${\mathfrak a}
$ is \textit{closable} \cite{Ka} if
\[
\left\{ \begin{array}{l}\lim\limits_{n\to \infty}\f_n=0, \quad \lim\limits_{n,m\to\infty}{\mathfrak a}[\f_n-\f_m]=0,\\
\quad \{\f_n\}\subset \dom {\mathfrak a}\end{array}\right.\Longrightarrow
\quad \lim\limits_{n\to\infty} {\mathfrak a}[\f_n]=0.
\]
The form ${\mathfrak a} $ is closable if and only if it has a closed
extension, and in this case the closure of the form is the smallest
closed extension of ${\mathfrak a}$. The inequality ${\mathfrak a}_1 \geq {\mathfrak a}_2$ for
semi-bounded forms ${\mathfrak a}_1$ and ${\mathfrak a}_2$ is defined by
\begin{equation}\label{ineq0}
 \dom {\mathfrak a}_1 \subseteq \dom {\mathfrak a}_2, \quad {\mathfrak a}_1[\f] \geq
{\mathfrak a}_2[\f], \quad \f \in \dom {\mathfrak a}_1.
\end{equation}
In particular, ${\mathfrak a}_1 \subset {\mathfrak a}_2$ implies ${\mathfrak a}_1 \geq {\mathfrak a}_2$.
If the forms ${\mathfrak a}_1$ and ${\mathfrak a}_2$ are closable, the inequality
${\mathfrak a}_1 \geq {\mathfrak a}_2$ is preserved by their closures.

If $A=\left\{\{f, f'\}\right\}$ is a nonnegative symmetric l.r., then the form
$${\mathfrak a}[f]= (f',f),\; \{f, f'\}\in A,\; f\in\dom A=\dom A_o$$
 is closable \cite{Ka,RoBe}.
There is a one-to-one correspondence between all closed nonnegative
forms ${\mathfrak a}$ and all nonnegative selfadjoint l.r. $A$ in
$\sH$ (the first representation theorem \cite{Ka}), see \cite{Ka,RoBe}, via $\dom A \subset \dom {\mathfrak a}$ and
\begin{equation}\label{einz}
 {\mathfrak a}[f,\psi]=(A_{o} f,\psi), \quad f \in \dom A, \quad \psi \in \dom {\mathfrak a}.
\end{equation}
In what follows the closed form associated with $A$ is
denoted by $A[\cdot,\cdot]$ and its domain by $\cD[A]$.
By the second representation theorem
\begin{equation}\label{zwei2}
 A[\f,\psi]=(A_{o}^\half \f, A_{o}^\half \psi),  \quad \f,\psi \in \cD[A]=\dom A_{o}^\half.
\end{equation}
The formulas \eqref{einz}, \eqref{zwei2} are analogs of Kato's
representation theorems for, in general, nondensely defined closed
semi-bounded forms in \cite[Section~VI]{Ka}; see e.g.
\cite{RoBe,AHZS,HSSW07}.

If $A$ is a nonnegative selfadjoint l.r. and if $T={{\mathfrak C}}(A)$ is its Cayley transform, then \cite{ArlBelTsek2011}
\begin{equation}\label{AJHVF}
\left\{ \begin{array}{l}\cD[ A]=\ran (I+T)^\half,\\[3mm]
A[u,v]=-(u,v)+2\left(I+ T)^{[-\half]}u,(I+T)^{[-\half]}v\right),\; u,v\in\cD[A],
\end{array}\right.
\end{equation}
where $(I+T)^{[-\half]}$ is the Moore-Penrose pseudo-inverse. Replacing $A$ by $A^{-1}$ in \eqref{AJHVF}, we conclude that $\ran A^{\half}=\ran (I-T)^{\half}.$

Given a sequence $\{A_n\}$ of nonnegative selfadjoint l.r., we say that $\{A_n\}$ converges in the strong resolvent sense \cite{Ka} to a nonnegative selfadjoint l.r. $A$\\
(${\rm s-R}-\lim\limits_{n\to\infty}A_n=A$) if the sequence of the resolvent $\{(A_n+I)^{-1}\}$ converges in the strong sense to the resolvent $(A+I)^{-1}$ (${\rm s}-\lim\limits_{n\to\infty}(A_n+I)^{-1}=(A+I)^{-1}$).

Let $A_1$ and $A_2$ be nonnegative selfadjoint l.r. in $\cH$,
then $A_1$ and $A_2$ are said to satisfy the inequality $A_1 \ge
A_2$ if
\[
 \dom A_{1,o}^{\half}\subset \dom A_{2,o}^{\half}\text{ and }
 \|A_{1,o}^{\half} \f\| \ge \|A_{2,o}^{\half} \f\|, \quad
 \f \in \dom A_{1,o}^{\half}.
\]
This means that the closed nonnegative forms $A_1[\cdot,\cdot]$ and
$A_2[\cdot,\cdot]$ associated with $A_1$ and $A_2$ satisfy the
inequality $A_1[\f] \geq A_2[\f]$ for all $\f\in D[A_1]$; see \eqref{ineq0}, \eqref{zwei2}.
Finally we note that the inequalities $A_1\ge A_2\ge 0$ are equivalent to the inequality
$A^{-1}_1\le A^{-1}_2$, see \cite{HSSW06}.
Besides,
\begin{equation}\label{yjdjtyt}
A_1\ge A_2\ge 0\Longleftrightarrow -I\le{{\mathfrak C}}(A_1)\le {{\mathfrak C}}(A_2)\le I.
\end{equation}
Let $A$ be a nonnegative selfadjoint l.r. in $\cH$. Then the resolvent $(A+x I)^{-1}$ is a bounded nonnegative selfadjoint operator for an arbitrary positive number $x$.
If $0\le x_1\le x_2$, then, clearly, $ A^{-1}\ge (A+x_1 I)^{-1}\ge (A+x_2 I)^{-1}$ and
\begin{equation}\label{ranhalf}\begin{array}{l}
\lim\limits_{x\downarrow 0}\left((A+x I)^{-1}f,f\right)=\lim\limits_{x\downarrow 0}\left\|(A_{o}+x I)^{-\half}P_Af\right\|^2\\
=\left\{\begin{array}{l}0,\qquad\qquad\qquad f\in\mul A\\
||A_{o}^{[-\half]}P_Af||^2,\; P_Af\in\ran A^{\half}_{o}\\
+\infty, \qquad\qquad f\in \cH\setminus\ran A^{\half}
\end{array}\right.=\left\{\begin{array}{l} A^{-1}[f],\qquad f\in\ran A^{\half}\\
+\infty, \qquad\qquad f\in \cH\setminus\ran A^{\half}
\end{array}\right..
\end{array}
\end{equation}

Let $A$ and $B$ be two nonnegative selfadjoint l.r.. If $\dom A\cap\dom B\ne 0$, then it is naturally defined the sum
\[
A+B:=\left\{\{f,f'+g'\}: \; \{f,f'\}\in A,\;\{f,g'\}\in B\right\}
\]
which is nonnegative symmetric l.r. and it, in general, is not selfadjoint.

On the other side, one can define a quadratic form
\[
{\mathfrak g}[\f]=A[\f]+B[\f],\; \f\in \cD[A]\cap\cD[B],
\]
which is nonnegative and closed \cite{Ka,FM,HSSW06}. Hence, by the first representation theorem, \cite{Ka, RoBe} there is a nonnegative selfadjoint l.r. associated with ${\mathfrak g}$. This l.r. is called the form sum of $A$ and $B$ \cite{Faris, FM, HSSW06, Ka} and is  denoted by $A\dot+ B$. Observe that
 $$A\dot+B=\{0\}\oplus\cH\Longleftrightarrow\cD[A]\cap\cD[B]=\{0\}.$$

\section{Shorted operators for non-negative selfadjoint linear relations}

\begin{theorem}\label{shortrel}
Let $A$ be a nonnegative selfadjoint l.r. in the Hilbert space $\cH$ and let $\cL$ be a subspace of $\cH$.
Then the set 
\begin{equation}\label{setxi}
\Xi(A,\cL)=\left\{\wt A \;\mbox{is a nonnegative selfadjoint l.r.},\;\wt A\le A,\;\ran \wt A\subseteq \cL\right\}
\end{equation}
has a unique maximal element $A_\cL$ and $\ker A_\cL\supseteq\cL^\perp$. Moreover,
\begin{enumerate}
\item
the l.r. $A_\cL\uphar\cL$ is selfadjoint in $\cL$ and $(A_\cL\uphar\cL)^{-1}$ is associated with the closed sesquilinear form ${\mathfrak c_\cL}$ defined as follows
\begin{equation}\label{sesqk}
{\mathfrak c_\cL}[f,g]:= A^{-1}[f,g],\;f,g\in \dom {\mathfrak c_\cL}=\ran A^\half\cap \cL;
\end{equation}
\item the equality $\ran(A_\cL)^{\half}=\ran A^{\half}\cap\cL$ holds and $A_\cL=0$ ($\dom A_\cL=\cH$) if and only if $\ran A^{\half}\cap\cL=\{0\}$;
\item if ${\mathfrak C}(A)$ is the Cayley transform \eqref{rtkbtr} of $A,$ then
\begin{equation}\label{gthda1}
 A_\cL= {\mathfrak C}\left(I-(I-{\mathfrak C}(A))_\cL\right);
\end{equation}
\item $A_\cL$ is an operator ($\mul A_\cL=\{0\}$) if and only if $\mul A\cap\cL=\{0\}$;
\item $A_\cL\in\bB^+(\cH)\setminus\{0\}$ if and only if the linear manifold
\begin{equation}\label{ghjcn}
\sM:=\left\{f\in\cD[A]:A^{\half}f\in\cL\right\}
\end{equation}
is a subspace and $\mul A\cap\cL=\{0\}$.
\end{enumerate}
\end{theorem}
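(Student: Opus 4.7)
The plan is to reduce everything to Kre\u{\i}n's classical shorting via the Cayley transform. Setting $T:=\mathfrak{C}(A)\in[-I,I]$ and $R:=I-T\in\bB^+(\cH)$, so $0\le R\le 2I$, I observe that for any nonnegative selfadjoint l.r.\ $\wt A$ with Cayley transform $\wt T$ and $\wt R:=I-\wt T$, the order reversal \eqref{yjdjtyt} gives $\wt A\le A\iff \wt R\le R$, while the representation $\wt A=\{\{(I+\wt T)f,(I-\wt T)f\}:f\in\cH\}$ yields $\ran \wt A=\ran(I-\wt T)=\ran \wt R$, so the range condition becomes $\ran \wt R\subseteq\cL$. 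Hence $\wt A\mapsto \wt R$ is a bijection between $\Xi(A,\cL)$ and Kre\u{\i}n's set $\Xi(R,\cL)$ from \eqref{maxprob}, which has the unique maximum $R_\cL$. Defining $A_\cL:=\mathfrak{C}(I-R_\cL)$ produces the unique maximal element of $\Xi(A,\cL)$ and simultaneously establishes item (3). The inclusion $\cL^\perp\subseteq\ker A_\cL$ follows from selfadjointness: for any $\{f,f'\}\in A_\cL$ we have $f'\in\ran A_\cL\subseteq\cL$, so $(f',g)=0=(f,0)$ for every $g\in\cL^\perp$, exhibiting $\{g,0\}\in A_\cL^*=A_\cL$.

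Item (2) is Kre\u{\i}n's identity \eqref{shintr} applied to $R$ and transferred back: $\ran A_\cL^\half=\ran(I-\mathfrak{C}(A_\cL))^\half=\ran R_\cL^\half=\ran R^\half\cap\cL=\ran A^\half\cap\cL$, where the last equality uses $\ran A^\half=\ran(I-T)^\half$ recorded after \eqref{AJHVF}. For item (1), the inclusion $\cL^\perp\subseteq\ker A_\cL$ produces the orthogonal decomposition $A_\cL=(\cL^\perp\oplus\{0\})\oplus(A_\cL\uphar\cL)$, which immediately shows that $A_\cL\uphar\cL$ is selfadjoint in $\cL$. To identify the form of $(A_\cL\uphar\cL)^{-1}$ with $\mathfrak{c}_\cL[f,g]=A^{-1}[f,g]$ on $\ran A^\half\cap\cL$, I would combine the monotonicity $A^{-1}\le A_\cL^{-1}$ (equivalent to $A_\cL\le A$) with Kre\u{\i}n's infimum representation \eqref{Krform1} applied to $R_\cL$ and transferred through \eqref{AJHVF} and \eqref{jdhfn}.

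For item (4), compute $\mul A_\cL=\ker(I+\mathfrak{C}(A_\cL))=\ker(2I-R_\cL)$, and show this equals $\mul A\cap\cL$. The inclusion $\subseteq$ follows from the sandwich $R_\cL\le R\le 2I$ together with $\ran R_\cL\subseteq\cL$: any $f$ with $R_\cL f=2f$ must satisfy $Rf=2f$ (i.e.\ $f\in\mul A$) and $f\in\cL$. For the reverse, given nonzero $f\in\mul A\cap\cL$ I would exhibit $2P_{\mathrm{span}(f)}$ as an element of $\Xi(R,\cL)$, verifying $2P_{\mathrm{span}(f)}\le R$ directly from $Rf=2f$; maximality of $R_\cL$ then yields $R_\cL f=2f$. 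For item (5), $A_\cL\in\bB^+(\cH)$ iff $(I+A_\cL)^{-1}=I-R_\cL/2$ is bounded below iff $2\notin\sigma(R_\cL)$; combined with (4), the remaining task is to show that when $\mul A\cap\cL=\{0\}$, absence of $2$ from $\sigma(R_\cL)$ is equivalent to closedness of $\sM$.

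The main obstacle I anticipate is this final step of item (5): linking the topological closedness of $\sM=\{f\in\cD[A]:A^\half f\in\cL\}$, defined via the unbounded square root $A^\half$, to a spectral condition on the bounded operator $R_\cL$. The natural bridge is the identification $\sM=(I+T)^\half(\sM_R)$ with $\sM_R=\{g:R^\half g\in\cL\}$ (automatically closed), together with the decomposition $2I-R_\cL=(2I-R)+R^\half P_{\sM_R^\perp}R^\half$; since $(I+T)^\half$ is bounded but generally not bounded below, closedness of the image has to be extracted from an approximate-eigenvector argument simultaneously controlling $(I+T)g_n\to 0$ and $P_{\sM_R^\perp}R^\half g_n\to 0$ for any sequence realising $((2I-R_\cL)g_n,g_n)\to 0$.
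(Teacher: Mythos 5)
Your reduction to Kre\u{\i}n's shorting is sound and, for the core of the theorem, somewhat cleaner than the paper's own proof: noting that $\wt A\mapsto I-\mathfrak C(\wt A)$ is an order isomorphism of $\Xi(A,\cL)$ onto Kre\u{\i}n's set $\Xi(I-T,\cL)$ from \eqref{maxprob} gives existence, uniqueness and item (3) in one stroke, whereas the paper recomputes the extremal Cayley transform through the block parametrization \eqref{param}. Your items (2) and (4) are correct as written; in (4) your rank-one comparison $2P\le I-T$ for the projection onto a vector $f\in\mul A\cap\cL$ is a valid alternative to the paper's argument, which instead reads off $\mul A_\cL$ from the kernel of the form $\mathfrak c_\cL$. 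Item (1) is only sketched, but the two ingredients you name (monotonicity $A^{-1}\le A_\cL^{-1}$ for one inequality, the factorization $(I-T)_\cL=(I-T)^\half P_{\cL'}(I-T)^\half$ implicit in \eqref{Krform1} for the other, transferred through \eqref{AJHVF}) are exactly what the paper's isometry argument amounts to, so that part is a workable plan rather than a gap.

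The genuine gap is item (5), and you flag it yourself: the equivalence, under $\mul A\cap\cL=\{0\}$, between closedness of the manifold $\sM$ in \eqref{ghjcn} and $2\notin\sigma\left((I-T)_\cL\right)$ is left as an ``anticipated obstacle'' with only a proposed approximate-eigenvector scheme, so the theorem is not proved as it stands. Moreover the bridge you propose is shakier than you indicate: the identification $\sM=(I+T)^\half(\sM_R)$ is not literally correct when $\mul A\neq\{0\}$ (the condition $A^\half f\in\cL$ concerns a coset modulo $\mul A$, and $(I+T)^\half$ annihilates $\mul A$), so even the set-up of the limiting argument needs repair. The paper closes (5) with no spectral analysis of $(I-T)_\cL$ at all, by exploiting item (1): given $\mul A\cap\cL=\{0\}$ (item (4)), $A_\cL\in\bB^+(\cH)\setminus\{0\}$ holds iff $A_\cL\uphar\cL$ is a nonzero bounded operator, i.e.\ iff the form $\mathfrak c_\cL[f]=A^{-1}[f]$ of \eqref{sesqk} satisfies $\mathfrak c_\cL[f]\ge m\|f\|^2$ on $\ran A_o^\half\cap\cL$ for some $m>0$; writing $f=A_o^\half u$ with $u\in\sM$, this says precisely that $A_o^\half\uphar\sM$ is bounded, which is equivalent to closedness of $\sM$ (bounded implies closed because $A_o^\half$ is closed and $\cL$ is closed; closed implies bounded by the closed graph theorem). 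Since you intend to prove the form identification of item (1) anyway, I recommend replacing your $\sigma(R_\cL)$ route for (5) by this argument; it removes the unresolved step entirely.
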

\begin{proof}
The operator $T:={\mathfrak C}(A)$ admits the following block operator matrix representation
\begin{equation}\label{cayltr}
T=\begin{bmatrix} D&C\cr
C^*&F\end{bmatrix}:\begin{array}{l}\cL^{\perp}\\\oplus\\
\cL\end{array}\to
\begin{array}{l}\cL^{\perp}\\\oplus\\
\cL\end{array}.
\end{equation}

If $\wt A\in\Xi(A,\cL)$ and if $\wt T={{\mathfrak C}}(\wt A)=-I+2(I+\wt A)^{-1}$ is the Cayley transform of $\wt A$, then because $\wt A+I\le A+I$ and
$2(\wt A+I)^{-1}=I+\wt T$, $2(A+I)^{-1}=I+T$, we conclude that $I+\wt T\ge I+T.$
On the other hand, the representation
\[
\wt A=\left\{\{(I+\wt T)g,(I-\wt T)g\},\; g\in\cH\right\}
\]
and the inclusion $\ran \wt A\subseteq\cL$ yield that $\ran (I-\wt T)\subseteq\cL.$
The latter is equivalent to the equality $\wt T\uphar\cL^\perp=I_{\cL^\perp}.$
It follows that $\wt T$ is of the form
$ \wt T=\begin{bmatrix} I&0\cr
0&\wt F\end{bmatrix}:\begin{array}{l}\cL^{\perp}\\\oplus\\
\cL\end{array}\to
\begin{array}{l}\cL^{\perp}\\\oplus\\
\cL\end{array}.$ 
Since
\[
(I+\wt T)-(I+T)=\begin{bmatrix} I-D&-C\cr
-C^*&\wt F-F\end{bmatrix}:\begin{array}{l}\cL^{\perp}\\\oplus\\
\cL\end{array}\to
\begin{array}{l}\cL^{\perp}\\\oplus\\
\cL\end{array}
\]
and $(I+\wt T)-(I+T)\ge 0$, from \eqref{POZ} we get the following inequality
\[
\wt F-F-\left( (I-D)^{[-\half]}C\right)^*(I-D)^{[-\half]}C\ge 0.
\]
Recall (cf. \cite{AG,DaKaWe,ShYa}), that the block operator-matrix \eqref{cayltr}
is a selfadjoint contraction if and only if the following properties of the entries are valid
\begin{equation}\label{param}
\left\{\begin{array}{l}
D\in [-I_{\cL^\perp}, I_{\cL^\perp}],\; C^*=ND_D, F=-NDN^*+D_{N^*}XD_{N^*},\\[3mm]
N\in \bB(\sD_D,\cL)\quad \mbox{is a contraction},\\
 X\in\bB(\sD_{N^*})\quad\mbox{is a selfadjoint contraction}
 \end{array}\right..
\end{equation}

From the structure \eqref{param} of entries of $T$ we obtain that
\[
F':=F+\left( (I-D)^{[-\half]}C\right)^*(I-D)^{[-\half]}C=NN^*+D_{N^*}XD_{N^*}.
\]
Thus, $-I_\cL\le F'\le\wt F\le I_\cL.$
Since the set $\Xi(A,\cL)$ coincides with the Cayley transforms of the set
\[
\Upsilon(T,\cL):={{\mathfrak C}}\left(\Xi(A,\cL)\right)=\left\{\wt T\in[-I, I]:\wt T\uphar\cL^\perp=I_{\cL^\perp}, \wt T\ge T\right\}
\]
and
\[
\min\limits_{\wt T\in\Upsilon(B\cL)}\wt T= T'=\begin{bmatrix} I&0\cr
0& F'\end{bmatrix}:\begin{array}{l}\cL^{\perp}\\\oplus\\
\cL\end{array}\to
\begin{array}{l}\cL^{\perp}\\\oplus\\
\cL\end{array},
\]
we obtain that
\[
\max\limits_{\wt A\in \Xi(A,\cL)}\wt A={{\mathfrak C}}(T')=:A_\cL.
\]
Let $(I-T)_\cL$ be the Kre\u{\i}n shorted operator. Then \eqref{cayltr} and \eqref{shormat1} yield
\[
(I- T)_\cL=\begin{bmatrix}0&0\cr 0& I-F-\left( (I-D)^{[-\half]}C\right)^*(I-D)^{[-\half]}C  \end{bmatrix}=
\begin{bmatrix}0&0\cr 0& I-F' \end{bmatrix}.
\]
Therefore
\[
T'=I-(I-T)_\cL,\; A_\cL={{\mathfrak C}}\left(I-(I-T)_\cL\right).
\]

Due to the properties of the shorted operator we have
\[
\cL\cap \ran(I- T)^\half=\ran\left((I-T)_\cL\right)^\half=\ran(I-F')^\half.
\]
It follows that
\[
\begin{array}{l}
A_\cL={{\mathfrak C}}(T')=\left\{\{(I+T')g,(I-T')g\},\; g\in\cH\right\}\\[3mm]
\qquad=\left\{\{h,0\}: h\in\cL^\perp\right\}\bigoplus\left\{\{(I_\cL+F')f,(I_\cL-F')f\}:f\in \cL\right\}=\left(\cL^\perp\oplus\{0\}\right)\bigoplus{{\mathfrak C}}(F').
\end{array}
\]
Hence $A_\cL\uphar\cL={{\mathfrak C}}(F')\subset\cL\oplus\cL.$
Set
\[
\cL':=\{f\in
\cH:(I-T)^\half f\in \cL\}.
\]
Because (see \eqref{Krform1}),
\[
(I-T)_\cL=(I-T)^\half P_{\cL'}(I-T)^{\half}
\]
and
\[
(I-T)_\cL\uphar \cL=I-F'
\]
we get that
\[
(I-T)^{\half}\uphar \cL'=(I-F')^{\half}V',
\]
 where $V'$ is an isometry from $\cL'$ onto $\cran(I-F')$.
It follows that
\[
||(I-T)^{[-\half]}f||=||(I-F')^{[-\half]}f||,\; f\in\ran (I-T)^{\half}\cap \cL=\ran (I-F')^{\half}.
\]
Now \eqref{AJHVF} yields that
\[
 A^{-1}[f,g]=\left({{\mathfrak C}}(F')\right)^{-1}[f,g]=\left(A_\cL\uphar\cL\right)^{-1}[f,g],\; f,g\in\cD[ A^{-1}]\cap \cL=\ran A^{\half}\cap\cL.
\]
Using definition \eqref{sesqk} of the form ${\mathfrak c_\cL}$, we get
$$\mul A_\cL=\{0\}\Longleftrightarrow \ker (A_\cL)^{-1}=\{0\}\Longleftrightarrow \ker{\mathfrak c_\cL}=\{0\}\Longleftrightarrow \mul A\cap\cL=\{0\},$$
$$A_\cL\in\bB^+(\cH)\setminus\{0\}\Longleftrightarrow \left\{\begin{array}{l}\mul A\cap\cL=\{0\},\\
{\mathfrak c_\cL}[f]\ge m||f||^2\;\forall f\in\cL\cap\ran A_o^\half,\; m>0   \end{array}\right.. $$
The latter is equivalent to the conditions: $\mul A\cap\cL=\{0\}$ and the linear manifold defined in \eqref{ghjcn} is closed.

\end{proof}
Observe that since $A_\cL\le A$ and $\cD[A_\cL]=\cL^\perp\oplus D[A_\cL\uphar\cL]$, we have
\[
D[A]\cap\cL\subseteq D[A_\cL\uphar\cL],\; A_\cL[f]\le A[f]\;\;\forall f\in D[A]\cap\cL.
\]
Let $A_1$ and $A_2$ be nonnegative selfadjoint l.r.. Suppose $A_1\le A_2$. Then $\Xi(A_1,\cL)\subset\Xi(A_2,\cL)$ for any subspace $\cL$ in $\sH$.
Hence
\[
(A_1)_\cL\le (A_2)_\cL.
\]
The next proposition is an extension of the property \eqref{ljfd1} to the set of all nonnegative selfadjoint l.r..
\begin{proposition}\label{connon}
Let $\{A_n\}$  be a sequences of nonnegative selfadjoint l.r. and let $\cL$ be a subspace in $\cH$. Then in the strong resolvent convergence sense
$$A_n\searrow A_\infty\Longrightarrow(A_n)_\cL\searrow (A_\infty)_\cL.$$
\end{proposition}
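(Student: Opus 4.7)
The plan is to reduce everything to the bounded case via the Cayley transform and then invoke the already-established monotone strong convergence property \eqref{ljfd1} for bounded nonnegative selfadjoint operators.

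First I would translate the hypotheses. Setting $T_n := {\mathfrak C}(A_n)$ and $T_\infty := {\mathfrak C}(A_\infty)$, the Cayley transforms lie in $[-I,I]$ and are related to the resolvents by $I+T_n = 2(I+A_n)^{-1}$. Hence strong resolvent convergence $A_n \to A_\infty$ is equivalent to strong convergence $T_n \to T_\infty$, and by \eqref{yjdjtyt} the monotonicity $A_n \searrow$ translates into $T_n \nearrow T_\infty$ strongly. In particular $I - T_n$ is a strongly decreasing sequence in $\bB^+(\cH)$ with strong limit $I - T_\infty \in \bB^+(\cH)$.

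Next, I would apply the bounded-case shorting identity \eqref{gthda1}, which gives
\[
(A_n)_\cL = {\mathfrak C}\bigl(I - (I-T_n)_\cL\bigr), \qquad (A_\infty)_\cL = {\mathfrak C}\bigl(I - (I-T_\infty)_\cL\bigr).
\]
Because $\{I - T_n\}$ is a strongly decreasing sequence in $\bB^+(\cH)$ with strong limit $I - T_\infty$, the classical Kre\u{\i}n--Shmul'yan continuity \eqref{ljfd1} yields $(I-T_n)_\cL \searrow (I-T_\infty)_\cL$ in the strong operator topology. Consequently $I - (I-T_n)_\cL \nearrow I - (I-T_\infty)_\cL$ strongly.

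Finally, I would translate back. Setting $T'_n := I-(I-T_n)_\cL$ and $T'_\infty := I-(I-T_\infty)_\cL$, the values $T'_n,T'_\infty$ are selfadjoint contractions, and $T'_n \nearrow T'_\infty$ strongly. Applying ${\mathfrak C}$ once more, the equivalence between strong convergence of Cayley transforms and strong resolvent convergence of the associated nonnegative selfadjoint linear relations gives $(A_n)_\cL \to (A_\infty)_\cL$ in the strong resolvent sense. The monotonicity $(A_n)_\cL \searrow$ follows either from $T'_n \nearrow$ via \eqref{yjdjtyt}, or more directly from the monotonicity remark preceding the proposition (since $A_{n+1}\le A_n$ implies $(A_{n+1})_\cL\le (A_n)_\cL$).

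There is no real obstacle beyond bookkeeping; the only point requiring care is to not confuse the two directions of monotonicity under ${\mathfrak C}$ (the map reverses order) and to verify that $\{I-T_n\}$ genuinely decreases strongly inside $\bB^+(\cH)$, so that \eqref{ljfd1} applies verbatim.
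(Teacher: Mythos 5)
Your proposal is correct and follows essentially the same route as the paper: pass to the Cayley transforms, use \eqref{gthda1} to express $(A_n)_\cL$ via the bounded shorted operators $(I-T_n)_\cL$, invoke the monotone continuity \eqref{ljfd1} for $\bB^+(\cH)$, and translate back through $\mathfrak C$ to get strong resolvent convergence, with monotonicity handled by the order-reversal of the Cayley transform. No substantive differences from the paper's argument.
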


\begin{proof}
Set $T_n:=\mathfrak C(A_n)$, $n\in\dN$,\; $ T_\infty:=\mathfrak C(A_\infty)$. Then
\[
{\rm s-R}-\lim\limits_{n\to\infty}A_n=A_\infty\Longrightarrow {\rm s}-\lim\limits_{n\to \infty}T_n=T_\infty.
\]
Moreover, $\{A_n\}$ is non-increasing $\Longleftrightarrow$ $\{T_n\}$ is non-decreasing $\Longleftrightarrow$ $\{I-T_n\}$ is non-increasing $\Longrightarrow$
$\{(I-T_n)_\cL\}$ is non-increasing for each subspace $\cL$ in $\cH$. Because $\{I-T_n\}\subset\bB^+(\cH)$ we get \cite{Shmul} that in the strong convergence sense
\[
(I-T_n)_\cL\searrow (I-T_\infty)_\cL.
\]
Hence from \eqref{gthda1}
\[
{\rm s-R}-\lim\limits_{n\to\infty} (A_n)_\cL={\rm s-R}-\lim\limits_{n\to\infty} \mathfrak C\left(I-(I-T_n)_\cL\right)=\mathfrak C\left(I-(I-T_\infty)_\cL\right)
=(A_\infty)_\cL.
\]
The proof is complete.
\end{proof}
Replacing $A$ by $A^{-1}$ in Theorem \ref{shortrel} we get that
\[
(A^{-1})_\cL=\max\{\wt B:\wt B\in \Xi(A^{-1},\cL)\}
=\min\left\{\wt A:\wt A=\wt A^*,\;\wt A\ge A,\;\dom \wt A\subseteq\cL\right\}
\]
and $((A^{-1})_\cL\uphar\cL)^{-1}$ is associated with the closed form
\begin{equation}\label{deka}
{\mathfrak d}_\cL[\varphi,\psi]=A[\varphi,\psi],\; \varphi,\psi\in\cD [A]\cap\cL.
\end{equation}
Besides, $\ran((A^{-1})_\cL)^{\half}=\cD [A]\cap \cL$ and $(A^{-1})_\cL=0$ if and only if $\cD [A]\cap \cL=\{0\}.$
\begin{theorem} \label{shortre2}
Let $A$ be a nonnegative selfadjoint l.r. in the Hilbert space $\cH$ and let $\cL$ be a subspace of $\cH$. Then
\[
A_\cL\le ((A^{-1})_\cL)^{-1}.
\]
Let $\cM_{A,\cL}(\lambda)$  be the Nevanlinna family defined in \eqref{nevfam}.
Then
\[
\begin{array}{l}
A_\cL\uphar\cL=-\left({\rm s-R}-\lim\limits_{\lambda\uparrow 0}\cM_{A,\cL}(\lambda)\right),\\[2mm]
((A^{-1})_\cL)^{-1}\uphar\cL=
-\left({\rm s-R}-\lim\limits_{\lambda\downarrow -\infty}\cM_{A,\cL}(\lambda)\right).
\end{array}
\]

\end{theorem}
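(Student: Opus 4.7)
The plan has two parts. For the inequality, I chain two bounds: $A_\cL\le A$ holds by definition of the shorting (see \eqref{setxi}), while $(A^{-1})_\cL\le A^{-1}$ by the same definition; inverting reverses the order of nonnegative selfadjoint l.r.\ (as recalled before \eqref{yjdjtyt}), so $((A^{-1})_\cL)^{-1}\ge A$, and transitivity yields $A_\cL\le((A^{-1})_\cL)^{-1}$.

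For the first s-R limit, I would exhibit the family $\{P_\cL(A-\lambda I)^{-1}\uphar\cL\}_{\lambda<0}$ of bounded nonnegative selfadjoint operators on $\cL$ as monotone non-decreasing in $\lambda$ (since $t\mapsto(t-\lambda)^{-1}$ is increasing in $\lambda\in(-\infty,0)$ for every $t\ge 0$), with associated quadratic forms $\f\mapsto((A-\lambda I)^{-1}\f,\f)$, $\f\in\cL$. Substituting $x=-\lambda$ in \eqref{ranhalf} identifies the pointwise supremum of these forms as $\lambda\uparrow 0$ with
\[
\f\mapsto\begin{cases} A^{-1}[\f], & \f\in\ran A^{\half}\cap\cL,\\ +\infty, & \f\in\cL\setminus\ran A^{\half},\end{cases}
\]
which is precisely the closed form ${\mathfrak c}_\cL$ of \eqref{sesqk}, i.e.\ the form of $(A_\cL\uphar\cL)^{-1}$ by Theorem \ref{shortrel}(1). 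The monotone convergence theorem for closed nonnegative forms then yields ${\rm s-R}-\lim_{\lambda\uparrow 0}P_\cL(A-\lambda I)^{-1}\uphar\cL=(A_\cL\uphar\cL)^{-1}$. Inversion preserves s-R convergence for nonnegative selfadjoint l.r.\ via the identity $(X^{-1}+I)^{-1}=I-(X+I)^{-1}$, so $(P_\cL(A-\lambda I)^{-1}\uphar\cL)^{-1}\to A_\cL\uphar\cL$ in s-R sense; absorbing the vanishing shift $\lambda I_\cL$ into the definition \eqref{nevfam} delivers the first displayed equality.

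For the second limit, apply the first equality to $A^{-1}$ (itself a nonnegative selfadjoint l.r.): as $\mu\uparrow 0$, $-\cM_{A^{-1},\cL}(\mu)\to(A^{-1})_\cL\uphar\cL$ in s-R sense. Invoke the reciprocal identity \eqref{jdhfn}, $\cM_{A^{-1},\cL}(\mu)=(\cM_{A,\cL}(\mu^{-1}))^{-1}$, and invert (using $(-Y)^{-1}=-Y^{-1}$ for selfadjoint l.r.\ $Y$, together with preservation of s-R convergence under inversion) to obtain $-\cM_{A,\cL}(\mu^{-1})\to((A^{-1})_\cL\uphar\cL)^{-1}$. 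Setting $\nu=\mu^{-1}\downarrow-\infty$ yields the second displayed equality, once one identifies $((A^{-1})_\cL)^{-1}\uphar\cL$, viewed as an l.r.\ on $\cL$ (modulo the $\cL^\perp$-multivalued part forced by $\cL^\perp\subseteq\ker(A^{-1})_\cL$ from Theorem \ref{shortrel}), with $((A^{-1})_\cL\uphar\cL)^{-1}$.

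The main obstacle will be the form-level step when ${\mathfrak c}_\cL$ is non-densely defined in $\cL$, so that $(A_\cL\uphar\cL)^{-1}$ has a nontrivial multi-valued part; one must apply (or adapt) the monotone convergence theorem for nonnegative closed forms in a version that admits l.r.-valued limits and non-densely defined limit forms. A secondary subtlety is the commutation of inversion with strong resolvent convergence for l.r., which must be justified cleanly (e.g.\ through the identity above and the bounded continuous functional calculus for selfadjoint l.r.).
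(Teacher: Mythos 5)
Your proposal is correct, but it takes a genuinely different route from the paper's. For the inequality you simply chain $A_\cL\le A$ with antitonicity of inversion, $((A^{-1})_\cL)^{-1}\ge (A^{-1})^{-1}=A$; the paper instead works on the Cayley-transform side, deducing $I-(I-T)_\cL\ge (I+T)_\cL-I$ from $(I-T)_\cL+(I+T)_\cL\le 2I$ and applying \eqref{yjdjtyt} --- both give the stated inequality, yours more economically. For the limit formulas the paper writes $T=\mathfrak C(A)$ as a block matrix, introduces the $\cRS(\cL)$-function $\Phi(z)=F+zC^*(I_{\cL^\perp}-zD)^{-1}C$, identifies $A_\cL\uphar\cL=\mathfrak C(\Phi(1))$ and $((A^{-1})_\cL)^{-1}\uphar\cL=\mathfrak C(\Phi(-1))$ through the strong boundary values $\Phi(\pm1)$ and the Schur--Frobenius formula \eqref{compres1}, and then translates via explicit resolvent identities between $\cN_{A,\cL}$ and $\cN_{T,\cL}$; you instead argue directly at the form level: the compressed resolvents $P_\cL(A-\lambda I)^{-1}\uphar\cL$ increase as $\lambda\uparrow 0$, their forms converge by \eqref{ranhalf} to the closed form ${\mathfrak c}_\cL$ of \eqref{sesqk}, so the non-decreasing monotone convergence theorem in its linear-relation version (e.g.\ \cite[Theorem 4.2]{BHSW2010}, which the paper itself uses elsewhere) gives ${\rm s-R}$-convergence to $(A_\cL\uphar\cL)^{-1}$; inversion via $(X+I)^{-1}+(X^{-1}+I)^{-1}=I$, absorption of the shift $\lambda I_\cL$, and the duality \eqref{jdhfn} then yield both displayed limits. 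This buys a shorter, more elementary argument that bypasses $\Phi$ entirely, at the cost of invoking the form-level monotone convergence theorem for possibly non-densely defined limit forms (you flag this correctly) and of two small details that a final write-up should include: the vanishing shift is absorbed by a norm-resolvent perturbation estimate, and the inversion step through the identity at $-1$ uses that $-\cM_{A,\cL}(\lambda)\ge 0$ for $\lambda<0$, which follows from $P_\cL(A-\lambda I)^{-1}\uphar\cL\le|\lambda|^{-1}I_\cL$; also, your reading of $((A^{-1})_\cL)^{-1}\uphar\cL$ as $\left((A^{-1})_\cL\uphar\cL\right)^{-1}$ is exactly the paper's intended meaning. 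The paper's route, by contrast, yields the explicit Cayley-transform identifications $A_\cL\uphar\cL=\mathfrak C(\Phi(1))$ and $((A^{-1})_\cL)^{-1}\uphar\cL=\mathfrak C(\Phi(-1))$ as useful by-products.
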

\begin{proof}
If ${{\mathfrak C}}(A)=T$, then ${{\mathfrak C}}(A^{-1})=-T$. Hence from \eqref{gthda1} we get
\begin{equation}\label{gthda2}
(A^{-1})_\cL={{\mathfrak C}}\left(I-(I+T)_\cL\right).
\end{equation}
Then \eqref{gthda2} implies
\begin{equation}\label{gthda3}
\left((A^{-1})_\cL\right)^{-1}={{\mathfrak C}}\left((I+T)_\cL-I\right).
\end{equation}
The equality $(I-T)+(I+T)=2I$ and the definition of the shorted operator produce the inequality
\[
(I-T)_\cL+(I+T)_\cL\le 2 I.
\]
Hence
\[
I-(I-T)_\cL\ge (I+T)_\cL-I.
\]
From \eqref{yjdjtyt} it follows that
\[
A_\cL={{\mathfrak C}}\left(I-(I-T)_\cL\right)\le {{\mathfrak C}}\left((I+T)_\cL-I\right)=\left((A^{-1})_\cL\right)^{-1}.
\]
 The operator $T$ takes the block operator matrix form \eqref{cayltr}. The entries $C$ and $F$ of $T$ admit the representations \eqref{param}, i.e.,
\[
C^*=ND_D,\; C=D_DN^*,\; F=-NDN^*+D_{N^*}XD_{N^*}.
\]
Define the function
\begin{equation}
\label{strau}
\Phi(z)=F+zC^*(I_{\cL^\perp}-zD)^{-1}C,\; z\in\dC\setminus \{(-\infty,-1]\cup[1,+\infty)\}.
\end{equation}
The function $\Phi$ belongs to the class $\cRS(\cL)$ \cite{ArlHassi_2019}, i.e., $\Phi$ is a Nevanlinna function and the Schur class function in the unit disk $\dD$.
Then there exist strong limit values $\Phi(\pm 1)$ and
\[
\begin{array}{l}
\Phi(-1)=s-\lim\limits_{x\downarrow-1}\Phi(x)=F-\left( (I+D)^{[-\half]}C\right)^*(I+D)^{[-\half]}C=-NN^*+D_{N^*}XD_{N^*},\\[3mm]
\Phi(1)=s-\lim\limits_{x\uparrow 1}\Phi(x)=F+\left( (I-D)^{[-\half]}C\right)^*(I-D)^{[-\half]}C=NN^*+D_{N^*}XD_{N^*}.
\end{array}
\]
Observe that
\[
(I\pm T)_\cL=\begin{bmatrix}0&0\cr 0& I\pm\Phi(\mp 1)\end{bmatrix}:\begin{array}{l}\cL^{\perp}\\\oplus\\
\cL\end{array}\to
\begin{array}{l}\cL^{\perp}\\\oplus\\
\cL\end{array}.
\]
Therefore \eqref{gthda1} and \eqref{gthda3} yield
\[
A_\cL\uphar\cL={{\mathfrak C}}(\Phi(1)),\; ((A^{-1})_\cL)^{-1}\uphar \cL={{\mathfrak C}}(\Phi(-1)).
\]
But then due to $\Phi(1)={{\mathfrak C}}(A_\cL\uphar\cL)$ and $\Phi(-1)={{\mathfrak C}}(((A^{-1})_\cL)^{-1}\uphar \cL)$ we have $-\Phi(1)={{\mathfrak C}}\left((A_\cL\uphar\cL)^{-1}\right),$
\[
I_\cL-\Phi(1)=2(I_\cL+(A_\cL\uphar\cL)^{-1}\uphar\cL)^{-1},\;
I_\cL+\Phi(-1)=2\left(I_\cL+((A^{-1})_\cL)^{-1}\uphar \cL\right)^{-1}
\]
Note that according to the Schur-Frobenius formula for the resolvent of block-operator matrix the relation
\begin{equation}\label{compres1}
P_\cL(I_\cH-z T)^{-1}\uphar\cL=(I_\cL-z\Phi(z))^{-1},\; z\in\dC\setminus \{(-\infty,-1]\cup[1,+\infty)\}
\end{equation}
holds.

Further we  will use the following notations
\[
\begin{array}{l}
\cN_{A,\cL}(\lambda):=P_\cL(A-\lambda I)^{-1}\uphar \cL,\;
\lambda\in\dC\setminus\dR_+,\\[3mm]
\cN_{T, \cL}(\xi):=P_\cL(T-\xi I)^{-1}\uphar \cL,\;
\xi\in\dC\setminus[-1,1].
\end{array}
\]
The resolvents of $A$ and $T$ are connected by the relations
\[
\begin{array}{l}
\left\{\begin{array}{l}
 (A-\lambda
I)^{-1}=-\frac{1}{1+\lambda}\left(I+\frac{2}{1+\lambda}\left( T-\frac{1-\lambda}{1+\lambda}\, I\right)^{-1}\right),\;\lambda\in\rho(A)\setminus\{-1\}\\[3mm]
(A+I)^{-1}=\cfrac{1}{2}\left(T+I\right),\end{array}\right.,\\[3mm]
\Longleftrightarrow
(A-\lambda
I)^{-1}=\cfrac{1}{1-\lambda}(T+I)\left(I-\frac{1+\lambda}{1-\lambda}\,T\right)^{-1},\;\lambda\in\rho( A).
\end{array}
\]
Hence
\begin{multline}\label{cjj1}
\left(\cN^{-1}_{A, \cL}(\lambda)+I\right)^{-1}=\cN_{A, \cL}(\lambda)\left(\cN_{A,\cL}(\lambda)+I\right)^{-1}\\
=-\left(I+\frac{2}{1+\lambda}\cN_{T, \cL}\left(\frac{1-\lambda}{1+\lambda}\right)\right)\,\left(\lambda-\frac{2}{1+\lambda}\cN_{T, \cL}\left(\frac{1-\lambda}{1+\lambda}\right)\right)^{-1}\\
=-\left(\cN^{-1}_{T, \cL}\left(\frac{1-\lambda}{1+\lambda}\right)+\frac{2}{1+\lambda}\right)\left(\lambda
\cN^{-1}_{T, \cL}\left(\frac{1-\lambda}{1+\lambda}\right)-\frac{2}{1+\lambda}\right)^{-1}.
\end{multline}
From the equality ${{\mathfrak C}}(A^{-1})=-T$
and the equality
\[\cN^{-1}_{-T, \cL}\left(\frac{1-\lambda}{1+\lambda}\right)=-\cN^{-1}_{T, \cL}\left(-\frac{1-\lambda}{1+\lambda}\right)
\]
one gets
\begin{multline}\label{cjj2}
\left(\cN^{-1}_{A^{-1}, \cL}(\lambda)+I\right)^{-1}\\
=-\left(-\cN^{-1}_{T, \cL}\left(-\frac{1-\lambda}{1+\lambda}\right)+\frac{2}{1+\lambda}\right)\left(-\lambda
\cN^{-1}_{T, \cL}\left(-\frac{1-\lambda}{1+\lambda}\right)-\frac{2}{1+\lambda}\right)^{-1}.
\end{multline}
Using \eqref{compres1} in the form
\[
\cN_{ T, \cL}(\xi)=(\Phi(\xi^{-1})-\xi I)^{-1},\; \xi\in\dC\setminus [-1,1],
\]
where $\Phi(\cdot)$ is defined by \eqref{strau}, we get the equalities
\[
s-\lim\limits_{\xi\downarrow 1}\cN^{-1}_{T, \cL}(\xi)
=\Phi(1)-I_\cL,\;
s-\lim\limits_{\xi\uparrow -1}
\cN^{-1}_{T, \cL}(\xi)
=I_\cL+\Phi(-1).
\]
Using \eqref{cjj1}, \eqref{cjj2} and
since $$ 2(I+(A_\cL)^{-1}\uphar\cL)^{-1}=I-\Phi(1),\;2(I+((A^{-1})_\cL)^{-1}\uphar\cL)^{-1}=I+\Phi(-1),$$
we have
\[
\begin{array}{l}
A_\cL\uphar\cL={\rm s-R}-\lim\limits_{\lambda\uparrow 0}\left(P_\cL(A-\lambda)^{-1}\uphar \cL \right)^{-1},\\
(A^{-1})_\cL={\rm s-R}-\lim\limits_{\lambda\uparrow 0}\left(P_\cL( A^{-1}-\lambda)^{-1}\uphar \cL \right)^{-1}.
\end{array}
\]
From \eqref{nevfam}
\[
\left(P_\cL(A-\lambda)^{-1}\uphar \cL \right)^{-1}=-\cM_{A,\cL}(\lambda)-\lambda I_\cL,\;\lambda\in\rho(A).
\]
Hence
\[
A_\cL\uphar\cL=-\left({\rm s-R}-\lim\limits_{\lambda\uparrow 0}\cM_{A,\cL}(\lambda)\right).
\]
Similarly
\[
(A^{-1})_\cL\uphar\cL=-\left({\rm s-R}-\lim\limits_{\lambda\uparrow 0}\cM_{A^{-1},\cL}(\lambda)\right).
\]
Consequently
\[
\left(((A^{-1})_\cL)\uphar\cL\right)^{-1}=-\left({\rm s-R}-\lim\limits_{\lambda\uparrow 0}\left(\cM_{A^{-1},\cL}(\lambda)\right)^{-1}\right).
\]
Now \eqref{jdhfn} yields that
\begin{multline*}
\left(((A^{-1})_\cL\uphar\cL\right)^{-1}=-\left({\rm s-R}-\lim\limits_{\mu \downarrow -\infty}\cM_{A,\cL}(\mu)\right)\\
={\rm s-R}-\lim\limits_{\mu\downarrow -\infty}\left(\mu I_\cL+\left(P_\cL(A-\mu I)^{-1}\uphar \cL \right)^{-1}\right).
\end{multline*}
\end{proof}
Let $
A=\begin{bmatrix}A_{11}&A_{12}\cr A^*_{12}&A_{22} \end{bmatrix}:\begin{array}{l}\cL^{\perp}\\\oplus\\
\cL\end{array}\to
\begin{array}{l}\cL^{\perp}\\\oplus\\
\cL\end{array}
$
be a bounded nonnegative selfadjoint operator in $\cH$.
Due to \eqref{POZ} the entry $A_{12}$ admits the representation
\[
A_{12}=A^{\half}_{11}YA^{\half}_{22},\; Y\in\bB(\cran A_{22},\cran A_{11})\quad \mbox{is a contraction}.
\]
From \eqref{shormat1} we get the equality $A_\cL=A^{\half}_{22}(I-Y^*Y)A^{\half}_{22}.$
In general $A^{-1}$ is a l.r.. But $((A^{-1})_\cL)^{-1}$ is associated with the form $A[\varphi,\psi]$, $\varphi,\psi\in\cL$.
Hence
\[
((A^{-1})_\cL\uphar\cL)^{-1}=A_{22}.
\]
So, if $A_{22}$ has bounded inverse, then $(A^{-1})_\cL$ is bounded and vice versa. If this is a case, then $(A^{-1})_\cL=A^{-1}_{22}.$

On the other side the function $\cM_{A,\cL}$ defined in \eqref{nevbound} takes the form
\[
\cM_{A,\cL}(\lambda)=-A_{22}+A^{\half}_{22}Y^*(A_{11}-\lambda I_{\cL^\perp})^{-1}A_{11}YA^{\half}_{22},\;\lambda\in \rho(A_{11}).
\]
Hence
\[
\begin{array}{l}
-\left({\rm s}-\lim\limits_{\lambda\uparrow 0}\cM(\lambda)\right)=A^{\half}_{22}(I-Y^*Y)A^{\half}_{22}=A_\cL\uphar\cL,\\
-\left({\rm s}-\lim\limits_{\lambda\downarrow -\infty}\cM(\lambda)\right)=A_{22}=((A^{-1})_\cL\uphar\cL)^{-1}.
\end{array}
\]
\begin{remark}\label{steng}
Suppose that $\codim\cL<\infty.$
Let $A$ be a nonnegative selfadjoint l.r. in $\cH$. Then due to \cite{Stenger, ADW} the l.r. $P_\cL A^{-1}\uphar\cL$ and $P_\cL A\uphar\cL$ are selfadjoint and, therefore, they are associated with the closed forms ${\mathfrak c_\cL}$ and ${\mathfrak d}_\cL$ defined in \eqref{sesqk} and in \eqref{deka}, respectively.
Hence, due to Theorem \ref{shortrel} one gets
\[
\label{stenaz}
A_\cL\uphar\cL= \left(P_\cL A^{-1}\uphar\cL\right)^{-1},\; ((A^{-1})_\cL)^{-1}=P_\cL A\uphar\cL.
\]
\end{remark}
\section{Parallel addition of nonnegative selfadjoint linear relations}

\begin{definition}\label{parsumne}
Let $A$ and $B$ be nonnegative selfadjoint l.r. in $\cH$. Then the nonnegative selfadjoint l.r. $A:B$ defined as follows
\[
A:B=\left(A^{-1}\dot+B^{-1}\right)^{-1},
\]
where $\dot+$ is the form sum,
is said to be the parallel sum of $A$ and $B$.
The l.r.
$$ h(A,B):=2(A:B)=\left(\cfrac{A^{-1}\dot+B^{-1}}{2}\right)^{-1}$$
is said to be the harmonic mean of $A$ and $B$.
\end{definition}
Clearly, the parallel sum of two nonnegative selfadjoint l.r. is the nonnegative selfadjoint l.r. as well.
From Definition \ref{parsumne} it follows that
\begin{equation}\label{jhfnyst}
\begin{array}{l}
A^{-1}\dot +B^{-1}=(A:B)^{-1},\;A^{-1}:B^{-1}=(A\dot+B)^{-1},\\
\half (A^{-1}\dot+B^{-1})=(h(A, B))^{-1},\;h(A^{-1}, B^{-1})=\left(\cfrac{A\dot+B}{2}\right)^{-1},
\end{array}
\end{equation}
and
$$\mul(A:B)=\ker(A^{-1}\dot+B^{-1})=\mul A\cap\mul B.$$
Hence, $A:B$ is a selfadjoint operator if and only if $\mul A\cap\mul B=\{0\}.$

The parallel sum $A:B$ is a bounded operator if and only if $\mul A\cap\mul B=\{0\}$ and the quadratic form
$${\mathfrak q}[g]:= A^{-1}[g]+B^{-1}[g],\; g\in\ran A^{\half}\cap\ran B^{\half}$$
is positive definite. In particular, if $A$ or $B$ is a bounded nonnegative selfadjoint operator, then the nonnegative selfadjoint l.r. $A^{-1}\dot+B^{-1}$ has a bounded inverse, hence in this case the l.r. $A:B$ is a bounded nonnegative selfadjoint operator.

The theorem below shows that such way defined parallel addition for nonnegative selfadjoint l.r. preserves basic properties of parallel addition for bounded nonnegative selfadjoint linear operators.
\begin{theorem}\label{Frmsm}
The parallel sum of nonnegative selfadjoint l.r. possesses the properties:
\begin{enumerate}
\item $\lambda A:\lambda B=\lambda (A:B)$, $\lambda>0,$
\item $A:B=B:A,$
\item $(A:B):C=A:(B:C)=\left(A^{-1}\dot+B^{-1}\dot+C^{-1}\right)^{-1},$
\item $A:B\le A,\; A:B\le B,$
\item $A_1\le A_2\Longrightarrow A_1:B\le A_2:B, $
\item $\ran (A:B)^\half=\ran A^{\half}\cap \ran B^{\half},$
\item $A:B=0$ (i.e., $\dom (A:B)=\cH$ and $(A:B)f=0$ $\forall f\in\cH$) if and only if $\ran A^{\half}\cap \ran B^{\half}=\{0\},$
\item if the quadratic form $A^{-1}[\cdot]$ is the closed restriction of the quadratic form $B^{-1}[\cdot]$, then
$$h(A:B)=A.$$
\end{enumerate}
Besides, the following relation
\begin{multline}\label{hinft}
A:B={\rm s-R}-\lim\limits_{n\to\infty}\left(\left(A+\cfrac{1}{n}\, I\right):\left(B+\cfrac{1}{n}\, I\right)\right)\\
={\rm s-R}-\lim\limits_{n\to\infty}\left(\left(A+\cfrac{1}{n}\, I\right)^{-1}+ \left(B+\cfrac{1}{n}\, I\right)^{-1}\right)^{-1}
\end{multline}
is valid.
\end{theorem}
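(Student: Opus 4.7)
The strategy is to translate every property of $A:B$ into a statement about the form sum $A^{-1}\dot+B^{-1}$ via Definition \ref{parsumne}, using the order-reversing bijection $L\mapsto L^{-1}$ on nonnegative selfadjoint l.r.\ (so $X\ge Y\iff X^{-1}\le Y^{-1}$). Properties (1)--(5) then reduce to standard facts about closed nonnegative forms. Property (1) follows from $(\lambda A)^{-1}=\lambda^{-1}A^{-1}$ combined with scaling of forms. (2) and (3) are commutativity and associativity of $\dot+$. For (4), one observes that $\cD[A^{-1}\dot+B^{-1}]=\cD[A^{-1}]\cap\cD[B^{-1}]\subseteq\cD[A^{-1}]$ and $(A^{-1}\dot+B^{-1})[\varphi]\ge A^{-1}[\varphi]$ on that smaller domain, so $A^{-1}\dot+B^{-1}\ge A^{-1}$; inverting yields $A:B\le A$. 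Property (5) is analogous: $A_1\le A_2$ gives $A_1^{-1}\ge A_2^{-1}$, hence (by comparing domains and values) $A_1^{-1}\dot+B^{-1}\ge A_2^{-1}\dot+B^{-1}$, whose inversion is $A_1:B\le A_2:B$.

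For (6) and (7), the central identity is $\cD[A^{-1}]=\ran A^{\half}$, which is exactly the content of \eqref{ranhalf}: the limit $\lim_{x\downarrow 0}((A+xI)^{-1}f,f)$ is finite precisely when $f\in\ran A^{\half}$, and equals $A^{-1}[f]$ there. Combining this with $\ran(A:B)^{\half}=\cD[(A:B)^{-1}]=\cD[A^{-1}\dot+B^{-1}]=\cD[A^{-1}]\cap\cD[B^{-1}]$ gives (6). Property (7) then follows from the criterion $A\dot+B=\{0\}\oplus\cH\iff\cD[A]\cap\cD[B]=\{0\}$ stated in the preliminaries, applied to $A^{-1},B^{-1}$, together with the observation that $A:B=0$ is equivalent to $(A:B)^{-1}=\{0\}\oplus\cH$. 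For (8), if $A^{-1}[\,\cdot\,]$ is a closed restriction of $B^{-1}[\,\cdot\,]$ then on $\cD[A^{-1}]\cap\cD[B^{-1}]=\cD[A^{-1}]$ one has $(A^{-1}\dot+B^{-1})[\varphi]=2A^{-1}[\varphi]$, so $A^{-1}\dot+B^{-1}=2A^{-1}$ as closed forms (hence as l.r.); whence $A:B=(2A^{-1})^{-1}=\frac12 A$ and $h(A,B)=2(A:B)=A$.

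The main work lies in the limit formula \eqref{hinft}, for which the essential tool is Kato's monotone convergence theorem for closed nonnegative forms. For each $\varepsilon=1/n>0$ the operators $(A+\varepsilon I)^{-1}$ and $(B+\varepsilon I)^{-1}$ are bounded nonnegative selfadjoint (since $A,B\ge 0$ forces $\ker(A+\varepsilon I)=\{0\}$ and $\ran(A+\varepsilon I)=\cH$), so their form sum coincides with their ordinary sum. The forms
\[
b_\varepsilon[\varphi]:=\bigl((A+\varepsilon I)^{-1}\varphi,\varphi\bigr)+\bigl((B+\varepsilon I)^{-1}\varphi,\varphi\bigr),\quad \varphi\in\cH,
\]
are non-decreasing as $\varepsilon\downarrow 0$, because $\varepsilon\mapsto(A+\varepsilon I)^{-1}$ is monotone. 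By \eqref{ranhalf} their pointwise limit equals $A^{-1}[\varphi]+B^{-1}[\varphi]$ on $\ran A^{\half}\cap\ran B^{\half}$ and is $+\infty$ otherwise; by the identification $\cD[A^{-1}]=\ran A^{\half}$ used above, this is precisely the closed form of $A^{-1}\dot+B^{-1}$. Kato's monotone convergence theorem then yields $(A+\varepsilon I)^{-1}+(B+\varepsilon I)^{-1}\to A^{-1}\dot+B^{-1}$ in the strong resolvent sense. Finally, inversion $L\mapsto L^{-1}$ corresponds under the Cayley transform \eqref{rtkbtr} to $T\mapsto -T$, which is continuous in the strong operator topology, so inversion preserves strong resolvent convergence; applying it gives \eqref{hinft}. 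I expect the only non-routine step to be the identification of the pointwise limit of $b_\varepsilon$ with the closed form of $A^{-1}\dot+B^{-1}$ --- this is where \eqref{ranhalf} is indispensable, and where monotonicity of the sequence is needed to apply Kato's theorem.
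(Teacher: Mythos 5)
Your proposal is correct and follows essentially the same route as the paper: items (1)--(8) are treated by exactly the same inversion/form-sum arguments, and the limit formula \eqref{hinft} rests, as in the paper, on a monotone convergence theorem combined with \eqref{ranhalf} and the identification $\cD[A^{-1}]=\ran A^{\half}$. The only (immaterial) difference is that you apply the increasing monotone convergence theorem to the bounded forms of $(A+\tfrac{1}{n}I)^{-1}+(B+\tfrac{1}{n}I)^{-1}$ and then invert, using that inversion preserves strong resolvent convergence via the Cayley transform \eqref{rtkbtr}, whereas the paper applies the non-increasing version \cite[Theorem 3.7]{BHSW2010} directly to $H_n=\left((A+\tfrac{1}{n}I)^{-1}+(B+\tfrac{1}{n}I)^{-1}\right)^{-1}$, whose conclusion already describes the limit through the inverse square roots --- dual formulations of the same argument.
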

\begin{proof}
The first three properties follows directly from Definition \ref{parsumne}.
Because $A^{-1}\dot+B^{-1}\ge A^{-1}$, we get $A:B=\left(A^{-1}\dot+B^{-1}\right)^{-1}\le A$. Similarly, $A:B\le B$.

$$A:B=0 \Longleftrightarrow\mul(A^{-1}\dot+B^{-1})=\cH\Longleftrightarrow\ran A^{\half}\cap\ran B^{\half}=\{0\}.$$

If $A_1\le A_2$, then $A_1^{-1}\ge A_2^{-1}$ and $A^{-1}_1\dot+B^{-1}\ge A^{-1}_2\dot+B^{-1}$. Hence
$$A_1:B=\left(A^{-1}_1\dot+B^{-1}\right)^{-1}\le \left(A^{-1}_2\dot+B^{-1}\right)^{-1}=A_2:B.$$
Further,
$$\ran (A:B)^\half= \dom \left(A^{-1}\dot+B^{-1}\right)^\half=\dom A^{-\half}\cap \dom B^{-\half}=\ran A^{\half}\cap \ran B^{\half}.$$
If the quadratic form $A^{-1}[\cdot]$ is the closed restriction of the quadratic form $B^{-1}[\cdot]$, then for the form sum $A^{-1}\dot+B^{-1}$ one has that
$A^{-1}\dot+B^{-1}=2 A^{-1}.$
Hence $A:B=\half A$ and $h(A,B)=A$.

For each $n\in\dN$ define the nonnegative selfadjoint l.r. $H_n$ as follows
\[
H_n=\left(\left(A+\cfrac{1}{n}\, I\right)^{-1}+ \left(B+\cfrac{1}{n}\, I\right)^{-1}\right)^{-1}.
\]
Then $\{H_n\}$ is a non-increasing sequence. By \cite[Theorem 3.7]{BHSW2010} there exists
\[
H_\infty:={\rm s-R}-\lim\limits_{n\to\infty}H_n={\rm s-R}-\lim\limits_{n\to\infty}\left(\left(A+\cfrac{1}{n}\, I\right)^{-1}+ \left(B+\cfrac{1}{n}\, I\right)^{-1}\right)^{-1}
\]
and
\[
\begin{array}{l}
\ran H_\infty^\half=\left\{g\in\bigcap\limits_{n=1}^\infty\ran H_n^\half:\lim\limits_{n\to\infty}\left\|(H_n^{-\half})_o g\right\|^2<\infty\right\},\\[3mm]
\left\|(H_\infty^{-\half})_o g\right\|^2=\lim\limits_{n\to\infty}\left\|(H_n^{-\half})_o g\right\|^2.
\end{array}
\]
Since $H_\infty\le H_n$ $\forall n$, we get
\[
H_\infty\le  A+\cfrac{1}{n}\, I,\;H_\infty\le B+\cfrac{1}{n}\, I\qquad\forall n\in\dN.
\]
Hence $H_\infty\le A$ and $H_\infty\le B$.

Note that
\[
H_n^{-1}=\left(A+\cfrac{1}{n}\, I\right)^{-1}+ \left(B+\cfrac{1}{n}\, I\right)^{-1}
\]
is a bounded nonnegative seldfadjoint operator in $\cH$ for each $n\in\dN$ and
\[
\left\|H^{-\half}_n g\right\|^2=\left(\left(A+\cfrac{1}{n}\, I\right)^{-1}g,g\right)+\left(\left(B+\cfrac{1}{n}\, I\right)^{-1}g,g\right),\;g\in\cH.
\]
Besides
\[
\ker H^{-1}_n=\mul A\cap\mul B\;\;\forall n\in\dN.
\]
For each vector $h\in\cH$ the sequence of numbers $ \left\{\left\|H^{-\half}_n g\right\|^2\right\}_{n=1}^\infty$ is non-decreasing. From
\eqref{ranhalf} it follows that
\[
\lim\limits_{n\to\infty}\left\|H^{-\half}_n g\right\|^2<\infty\Longleftrightarrow g\in\ran A^{\half}\cap\ran B^{\half}.
\]
Thus
\[
\begin{array}{l}
\ran H_\infty^\half=\ran A^{\half}\cap\ran B^{\half}=\cD [A^{-1}]\cap \cD [B^{-1}],\\
\left\|H_\infty^{-\half}g\right\|^2=H^{-1}_\infty[g]=A^{-1}[g]+B^{-1}[g]=(A^{-1}\dot+B^{-1})[g],\;g\in \cD [A^{-1}]\cap \cD [B^{-1}].
\end{array}
\]
Therefore, $H^{-1}_\infty=A^{-1}\dot+B^{-1}$, $H_\infty=\left(A^{-1}\dot+B^{-1}\right)^{-1}=A:B$, i.e., \eqref{hinft} holds true.
\end{proof}

\begin{theorem}\label{parsumnesh}
Let $A$ be a nonnegative selfadjoint l.r. in $\cH$ and let $\cL$ be a subspace of $\cH$. Then the following equality
\[
A_\cL={\rm s-R}-\lim\limits_{t\uparrow +\infty}(A:tP_\cL)
\]
holds.
\end{theorem}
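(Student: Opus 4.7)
The plan is to pass through the inverse via $(A:tP_\cL)^{-1}=A^{-1}\dot+(tP_\cL)^{-1}$, to recognize in the limit $t\uparrow+\infty$ the closed form ${\mathfrak c_\cL}$ of Theorem \ref{shortrel}(1), and then to invert. As a preliminary, since $tP_\cL\in\bB^+(\cH)$, the remarks after Definition \ref{parsumne} imply that $A:tP_\cL$ is a bounded nonnegative selfadjoint operator. Theorem \ref{Frmsm}(4) gives $A:tP_\cL\le tP_\cL$ (so $\ran(A:tP_\cL)\subseteq\cL$) and $A:tP_\cL\le A$, placing $A:tP_\cL\in\Xi(A,\cL)$ and hence $A:tP_\cL\le A_\cL$ for every $t>0$. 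Theorem \ref{Frmsm}(5), combined with $t_1 P_\cL\le t_2 P_\cL$ for $t_1\le t_2$, shows that $\{A:tP_\cL\}_{t>0}$ is non-decreasing; equivalently, $H_t:=(A:tP_\cL)^{-1}=A^{-1}\dot+(tP_\cL)^{-1}$ is non-increasing in $t$.

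Next I would compute the form of $H_t$. The nonnegative selfadjoint l.r.\ $(tP_\cL)^{-1}$ has form domain $\cL$ with value $\tfrac{1}{t}\|f\|^2$, so
\[
H_t[f]=A^{-1}[f]+\tfrac{1}{t}\|f\|^2,\qquad f\in\cD[H_t]=\ran A^\half\cap\cL,
\]
and as $t\uparrow+\infty$ these values decrease pointwise on the fixed domain $\ran A^\half\cap\cL$ to the closed form ${\mathfrak c_\cL}[f]=A^{-1}[f]$ of Theorem \ref{shortrel}(1). The monotone convergence theorem for non-increasing families of closed nonnegative forms with closed pointwise limit --- the content of \cite[Theorem~3.7]{BHSW2010} invoked in the proof of Theorem \ref{Frmsm} --- then yields
\[
{\rm s-R-}\lim_{t\uparrow+\infty}H_t=(A_\cL)^{-1}.
\]
Here the identification of the limit l.r.\ in $\cH$ associated with ${\mathfrak c_\cL}$ as $(A_\cL)^{-1}$ uses Theorem \ref{shortrel}(1)--(2) together with the fact that $\mul(A_\cL)^{-1}=\ker A_\cL\supseteq\cL^\perp$, so that the multi-valued part of $(A_\cL)^{-1}$ fills out the orthogonal complement in $\cH$ of the closure of $\cD[{\mathfrak c_\cL}]\subseteq\cL$.

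Finally, the Cayley-transform identity ${\mathfrak C}(L^{-1})=-{\mathfrak C}(L)$ in \eqref{rtkbtr} shows that $L\mapsto L^{-1}$ is continuous in the strong resolvent topology on the set of nonnegative selfadjoint l.r. Passing to inverses in the preceding display gives ${\rm s-R-}\lim_{t\uparrow+\infty}(A:tP_\cL)=A_\cL$, which is the claim. The main subtlety is the identification just used: one must distinguish the l.r.\ $(A_\cL\uphar\cL)^{-1}$ in $\cL$ provided by Theorem \ref{shortrel}(1) from the l.r.\ $(A_\cL)^{-1}$ in $\cH$, and verify that the closed form ${\mathfrak c_\cL}$, viewed as a form on the ambient space $\cH$, is associated with the latter; this uses that $A_\cL\in\Xi(A,\cL)$ already forces $\cL^\perp\subseteq\ker A_\cL$.
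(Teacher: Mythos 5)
Your argument is correct, and it reaches the conclusion by a genuinely different route than the paper. The paper's proof stays on the level of the maximality characterization of $A_\cL$: it shows that $A(t)=A:tP_\cL$ is non-decreasing and belongs to $\Xi(A,\cL)$, invokes the monotone convergence theorem to get a limit $A_0\in\Xi(A,\cL)$, and then shows $A_0\ge\wt A$ for every $\wt A\in\Xi(A,\cL)$ by analysing $\wt A:tP_\cL$ through regularized resolvents $\wt H_n(t)$, so that $A_0=\max\Xi(A,\cL)=A_\cL$ by Theorem \ref{shortrel}. You instead identify the limit directly: you compute the closed form of $H_t=(A:tP_\cL)^{-1}=A^{-1}\dot+(tP_\cL)^{-1}$, namely $A^{-1}[f]+\tfrac1t\|f\|^2$ on the fixed domain $\ran A^{\half}\cap\cL$, let it decrease to the closed form ${\mathfrak c_\cL}$ of Theorem \ref{shortrel}(1), conclude ${\rm s-R}-\lim H_t=(A_\cL)^{-1}$ by monotone convergence, and invert using ${\mathfrak C}(L^{-1})=-{\mathfrak C}(L)$. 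Two remarks. First, the monotone convergence statement you actually need is the one for \emph{non-increasing} families of closed forms whose pointwise limit is closed (Simon, Theorem 3.2; the non-increasing results of \cite{BHSW2010} used in the proof of Theorem \ref{arharm}), rather than the exact statement of \cite[Theorem 3.7]{BHSW2010} quoted in the proof of Theorem \ref{Frmsm}; this is a citation adjustment, not a gap, since the limit form ${\mathfrak c_\cL}$ is closed so the "closure of the regular part" is ${\mathfrak c_\cL}$ itself. Second, you correctly isolate and settle the one real subtlety of your route, the passage from $(A_\cL\uphar\cL)^{-1}$ in $\cL$ to $(A_\cL)^{-1}$ in $\cH$: since $\ker A_\cL\supseteq\cL^\perp$ and $\overline{\ran A_\cL}=\overline{\ran A^{\half}\cap\cL}$ (Theorem \ref{shortrel}(2)), the multivalued part of $(A_\cL)^{-1}$ is exactly $\cH\ominus\clos{(\dom{\mathfrak c_\cL})}$ and the operator parts agree, so $(A_\cL)^{-1}$ is the relation in $\cH$ associated with ${\mathfrak c_\cL}$. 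What each approach buys: yours gives a self-contained identification of the limit and bypasses the paper's auxiliary claim that $\wt A:tP_\cL=\wt A$ for $\wt A\in\Xi(A,\cL)$ (an equality which, as the extra term $t^{-1}I_\cL$ in the paper's own computation shows, holds only in the limit $t\uparrow+\infty$, e.g. $P_\cL:tP_\cL=\tfrac{t}{t+1}P_\cL$), while the paper's ordering argument avoids any form-theoretic identification of the limit and uses only monotonicity and the maximality of $A_\cL$.
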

\begin{proof}
Set
\[
A(t):=A:tP_\cL,\; t>0.
\]
Then $A(t)\le A$ for all $t>0$, $A(t)$ is a non-decreasing sequences of nonnegative selfadjoint l.r.,
\[
\ran (A(t))^{\half}=\ran A^{\half}\cap \cL\;\forall t>0.
\]
Hence $\ran A(t)\subseteq \cL$ for all $t>0$ and, therefore, see \eqref{setxi}, $A(t)\in\Xi(A,\cL)$ for all $t>0$.
From \cite[Theorem 3.1]{BHSW2010} it follows that there exists
\begin{equation}\label{anul}
{\rm s-R}-\lim\limits_{t\uparrow +\infty} A(t)=:A_0.
\end{equation}
Since $\ker A(t)\supseteq\cL^\perp$, we get $\ker A_0\supseteq\cL^\perp\Longleftrightarrow \ran A_0\subseteq\cL$. Hence $A_0 \in\Xi(A,\cL)$.

Let $\wt A\in\Xi(A,\cL)$.
Set
\[
\wt H_n(t):=\left(\left(\wt A+\cfrac{1}{n}\, I\right)^{-1}+ \left(tP+\cfrac{1}{n}\, I\right)^{-1}\right)^{-1},\; n\in\dN,\; t>0.
\]
Because $\ker \wt A\supseteq\cL^\perp$ and $\ker P_\cL=\cL^\perp,$ $P_\cL\uphar\cL=I_\cL$,
we have that
\[
\wt H_n(t)\uphar\cL^\perp=\cfrac{1}{2n} I_{\cL^\perp},\; (\wt H_n(t))^{-1}\uphar\cL=\left(\wt A+\cfrac{1}{n}\, I_\cL\right)^{-1}\uphar\cL+\left(t+\cfrac{1}{n}\right)^{-1}\uphar\cL
\]
for all $n\in\dN$ and for all $t>0.$
Hence
\[
{\rm s-R}-\lim\limits_{n\to\infty} (\wt H_n(t))^{-1}\uphar\cL=\wt A^{-1}\uphar\cL+t^{-1}I_\cL\;\;\forall t>0.
\]
Therefore,
\[
\wt A:tP_\cL={\rm s-R}-\lim\limits_{n\to\infty} \wt H_n(t)=\wt A.
\]
Now the inequality $A\ge\wt A$ for all $\wt A\in \Xi(A,\cL)$ implies $A:tP_\cL\ge \wt A: tP_\cL$ for all $t>0$ and letting $t$ to  $+\infty$ from \eqref{anul} we get
\[
A_0\ge \wt A\;\;\forall\wt A\in \Xi(A,\cL).
\]
Due to Theorem \ref{shortrel}, the shorted l.r. $A_\cL$ is the maximal element of  $\Xi(A,\cL)$. Therefore, $A_0=A_\cL.$ The proof is complete.
\end{proof}

\begin{corollary}\label{afqyj}
Let $A$ and $B$ be nonnegative selfadjoint l.r. and let $\cL$ be a subspace.
Then
\[
(A:B)_\cL=A_\cL:B=A:B_\cL=A_\cL:B_\cL.
\]
\end{corollary}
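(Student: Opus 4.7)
The plan is to establish the central identity $(A:B)_\cL=A_\cL:B$ via the first representation theorem applied to the associated closed quadratic forms; the remaining equalities then follow from the commutativity of parallel addition (Theorem \ref{Frmsm}(2)) together with the idempotence of shorting (the fact that $C_\cL=C$ whenever $\ran C\subseteq\cL$, since $C$ is then manifestly the maximal element of $\Xi(C,\cL)$). Swapping the roles of $A$ and $B$ in the central identity gives $(A:B)_\cL=A:B_\cL$, and applying the central identity with $A_\cL$ in place of $A$ together with idempotence yields
\[
A_\cL:B=(A_\cL:B)_\cL=A_\cL:B_\cL,
\]
closing the chain.

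To prove $(A:B)_\cL=A_\cL:B$, I would first verify that both sides are nonnegative selfadjoint l.r.\ with range contained in $\cL$. For the left-hand side this is immediate from the defining property of the shorting. For the right-hand side, Theorem \ref{Frmsm}(6) combined with $\ran A_\cL^{\half}=\ran A^{\half}\cap\cL\subseteq\cL$ (Theorem \ref{shortrel}(2)) yields $\ran(A_\cL:B)^{\half}=\ran A_\cL^{\half}\cap\ran B^{\half}\subseteq\cL$, so $\ran(A_\cL:B)\subseteq\cL$. Consequently the inverses $((A:B)_\cL)^{-1}$ and $(A_\cL:B)^{-1}$ are both nonnegative selfadjoint l.r.\ whose multi-valued part contains $\cL^\perp$ and whose form domain lies in $\cL$; via the canonical decomposition of a selfadjoint l.r.\ into the graph of its operator part plus its multi-valued part, each is therefore uniquely determined by its closed quadratic form on $\cL$.

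The next step is to identify both forms and show they coincide. Theorem \ref{shortrel}(1), applied to $A:B$ in place of $A$, shows that $((A:B)_\cL\uphar\cL)^{-1}$ is associated with the form $(A:B)^{-1}[\f,\psi]$ on $\ran(A:B)^{\half}\cap\cL$. Since $(A:B)^{-1}=A^{-1}\dot+B^{-1}$ and $\ran(A:B)^{\half}=\ran A^{\half}\cap\ran B^{\half}$ by Theorem \ref{Frmsm}(6), this form equals
\[
A^{-1}[\f,\psi]+B^{-1}[\f,\psi],\qquad \f,\psi\in\cL\cap\ran A^{\half}\cap\ran B^{\half}.
\]
On the other hand, Definition \ref{parsumne} gives $(A_\cL:B)^{-1}=A_\cL^{-1}\dot+B^{-1}$, which is associated with $A_\cL^{-1}[\f,\psi]+B^{-1}[\f,\psi]$ on $\cD[A_\cL^{-1}]\cap\cD[B^{-1}]$; Theorem \ref{shortrel}(1)--(2) ensures $\cD[A_\cL^{-1}]=\ran A^{\half}\cap\cL$ and $A_\cL^{-1}[\f,\psi]=A^{-1}[\f,\psi]$ there, so this form coincides with the previous one on the very same domain $\cL\cap\ran A^{\half}\cap\ran B^{\half}$. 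The first representation theorem then delivers $((A:B)_\cL)^{-1}=(A_\cL:B)^{-1}$, and hence $(A:B)_\cL=A_\cL:B$.

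The main obstacle is essentially bookkeeping: one must carefully apply Theorem \ref{shortrel}(1) to the correct l.r.\ (first to $A:B$, then separately to $A$) and track how each appeal to Theorem \ref{Frmsm} shifts the form domain, so that the two closed forms are genuinely compared on the same set $\cL\cap\ran A^{\half}\cap\ran B^{\half}$. Once this is done, uniqueness in the first representation theorem, combined with the shared multi-valued part $\cL^\perp$ on both sides, forces equality of the l.r.\ themselves.
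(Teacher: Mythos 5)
Your argument is correct, but it follows a genuinely different route from the paper. You identify the closed form of the inverse on each side: by Theorem \ref{shortrel}(1)--(2) applied first to $A:B$ and then to $A$, together with Theorem \ref{Frmsm}(6) and $(A:B)^{-1}=A^{-1}\dot+B^{-1}$, both $\bigl((A:B)_\cL\bigr)^{-1}$ and $(A_\cL:B)^{-1}=A_\cL^{-1}\dot+B^{-1}$ are associated with the same closed form $A^{-1}[\f,\psi]+B^{-1}[\f,\psi]$ on $\cL\cap\ran A^{\half}\cap\ran B^{\half}$, so uniqueness in the first representation theorem gives $(A:B)_\cL=A_\cL:B$, and symmetry plus the idempotence $C_\cL=C$ for $\ran C\subseteq\cL$ closes the chain. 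The paper instead argues order-theoretically: it first shows $(A:B)_\cL\ge A_\cL:B$ and $(A:B)_\cL\ge A:B_\cL$ from maximality and the range identity, and gets the reverse inequalities (and the fourth term $A_\cL:B_\cL$) from the limit formula $A_\cL={\rm s-R}-\lim_{t\uparrow+\infty}(A:tP_\cL)$ of Theorem \ref{parsumnesh} combined with associativity, monotonicity, and monotone strong resolvent convergence. Your approach dispenses with Theorem \ref{parsumnesh} and the convergence machinery entirely and yields as a by-product an explicit description of the common relation through its inverse's form; its only delicate point is the bookkeeping you already flag, namely identifying the form of $(A_\cL\uphar\cL)^{-1}$ (a relation in $\cL$) with that of $A_\cL^{-1}$ (a relation in $\cH$ whose multi-valued part contains $\cL^\perp$), which is justified by the decomposition $A_\cL=(\cL^\perp\oplus\{0\})\bigoplus(A_\cL\uphar\cL)$ from the proof of Theorem \ref{shortrel}. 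The paper's route, in exchange, stays entirely at the level of the order structure and the shorting/parallel-addition calculus, mirroring the classical bounded-operator proofs of \eqref{interr} and \eqref{totjlyj} without any form-domain comparisons.
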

\begin{proof}
By definition
\[
(A:B)_\cL=\max\{C=C^*, C\ge 0:C\le A:B,\ran C\subset \cL\}
\]
Since $A_\cL:B\le A:B$ and
$$\ran (A_\cL:B)^{\half}=\ran (A_\cL)^{\half}\cap\ran B^{\half}=\cL\cap\ran A^{\half}\cap \ran B^{\half}\subset\cL, $$
we get that $(A:B)_\cL\ge A_\cL:B$ and similarly $(A:B)_\cL\ge A:B_\cL.$

On the other side because $B:tP_\cL$ is a non-decreasing function w.r.t. $t> 0$ and $B:tP_\cL\le B_\cL$, one has
\[
(A:B)_\cL={\rm s-R}-\lim\limits_{t\uparrow+\infty}((A:B):tP_\cL)={\rm s-R}-\lim\limits_{t\uparrow+\infty}(A:(B:tP_\cL))\le A:B_\cL
\]
Hence
\[
A:B_\cL\ge (A:B)_\cL\ge A:B_\cL\Longleftrightarrow (A:B)_\cL=A:B_\cL
\]
and similarly $(A:B)_\cL=A_\cL:B$.

Since $A_\cL\le A$, we get $A_\cL:B_\cL\le A:B_\cL=(A:B)_\cL$.
On the other side
\[
A_\cL\ge A:tP_\cL\Longrightarrow A_\cL:B_\cL\ge (A:tP_\cL):B_\cL=(A:B_\cL):tP_\cL \;\;\forall t>0
\]
Hence
\[
A_\cL:B_\cL\ge {\rm s-R}-\lim\limits_{t\uparrow+\infty}\left((A:B_\cL):tP_\cL\right)=(A:B_\cL)_\cL=A:B_\cL=(A:B)_\cL.
\]

\end{proof}

\section{The inequality between the harmonic and arithmetic means of nonnegative selfadjoint linear relations}\label{neravfa}

If $a$ and $b$ are positive numbers, then the harmonic mean and the arithmetic mean of $a$ and $b$ are connected by the inequality
\[
h(a,b)=\left(\cfrac{a^{-1}+b^{-1}}{2}\right)^{-1}\le \cfrac{a+b}{2},
\]
and the equality holds if and only if $a=b$.

Let $A,B\in\bB^+(\cH)$ and let $\sL:=\cran (A+B)$, then since $A+B\ge A, B$, there exists a nonnegative selfadjoint contraction $K$ in $\sL$ such that
\[
A=(A+B)^\half K(A+B)^\half,\; B=(A+B)^\half (I_\sL-K)(A+B)^\half.
\]
Actually, since $||(A+B)^\half f||^2\ge ||A^\half f||^2$ for all $f\in\cH$, by the Douglas factorization theorem \cite{Doug} there exists a contraction $X\in\bB(\cH,\sL)$ such that $A^\half =(A+B)^\half X$. Then $A=(A+B)^\half XX^*(A+B)^\half$.
The following equality is established in \cite{Ar2}:
\begin{equation}\label{yanash}
h(A,B)=2(A+B)^\half(I_\sL-K)K(A+B)^\half.
\end{equation}
It follows that
\[
h(A,B)\le \cfrac{A+B}{2}
\]
and the equality holds if and only if $A=B$.

The goal of this section is to prove a similar inequality for the case of nonnegative selfadjoint l.r..
First, we find the Cayley transforms of the arithmetic and harmonic means.
\begin{lemma}\label{dcgv1}
Let  $M$  be a nonnegative selfadjoint contraction in the Hilbert space $\sL$.
Then
\[
(M-M^2)^{-1}[f]=M^{-1}[f]+(I-M)^{-1}[f]\;\;\forall f\in\ran (M-M^2)^\half.
\]
\end{lemma}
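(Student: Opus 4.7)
The plan is to invoke the spectral theorem for the bounded nonnegative selfadjoint contraction $M$ and reduce the identity to the elementary partial fraction decomposition
\[
\frac{1}{\lambda(1-\lambda)}=\frac{1}{\lambda}+\frac{1}{1-\lambda},\qquad \lambda\in(0,1).
\]

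First, I would observe that $M$ and $I-M$ commute and are both nonnegative bounded selfadjoint contractions in $\sL$, so by uniqueness of the nonnegative square root of a commuting product,
\[
(M-M^2)^{1/2}=M^{1/2}(I-M)^{1/2}=(I-M)^{1/2}M^{1/2}.
\]
Let $\{E_\lambda\}_{\lambda\in[0,1]}$ denote the spectral resolution of $M$ in $\sL$. For any Borel $\varphi\colon [0,1]\to\dR_+$, the closed form associated with the nonnegative selfadjoint l.r. $\varphi(M)^{-1}$ admits, by the functional calculus, the spectral representation
\[
\varphi(M)^{-1}[f]=\int_{[0,1]}\frac{1}{\varphi(\lambda)}\,d\|E_\lambda f\|^2,
\]
under the convention $1/0=+\infty$; the form domain consists exactly of those $f$ for which the integral is finite. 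Applying this with $\varphi(\lambda)$ equal to $\lambda$, $1-\lambda$, and $\lambda(1-\lambda)$ produces spectral representations of all three forms appearing in the lemma.

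The remaining step is a domain check. For $f\in\ran(M-M^2)^{1/2}$, write $f=M^{1/2}(I-M)^{1/2}h$ using the factorization above; then $f\perp\ker M$ and $f\perp\ker(I-M)$, so $E_{\{0\}}f=E_{\{1\}}f=0$. Consequently both $M^{-1}[f]$ and $(I-M)^{-1}[f]$ are finite, and all three spectral integrals can be restricted to $(0,1)$. Integrating the partial fraction identity against the scalar measure $d\|E_\lambda f\|^2$ on $(0,1)$ then yields the claim. The only delicate point in the whole argument is this bookkeeping of spectral mass at the endpoints $\lambda=0$ and $\lambda=1$, which is handled cleanly by the factorization of $(M-M^2)^{1/2}$ through $M^{1/2}$ and $(I-M)^{1/2}$.
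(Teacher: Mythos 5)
Your proof is correct, but it takes a genuinely different route from the paper's. The paper argues without any spectral calculus: after reducing to the case $\ker M=\ker(I-M)=\{0\}$, it writes $f=(M-M^2)^{1/2}u$ and reads off $M^{-1/2}f=(I-M)^{1/2}u$, $(I-M)^{-1/2}f=M^{1/2}u$, $(M-M^2)^{-1/2}f=u$, so the claimed identity collapses to $\|u\|^2=\bigl(\|u\|^2-\|M^{1/2}u\|^2\bigr)+\|M^{1/2}u\|^2$; in other words, the factorization $(M-M^2)^{1/2}=M^{1/2}(I-M)^{1/2}$ is exploited in a two-line norm computation with Moore--Penrose square roots. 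You instead invoke the spectral resolution of $M$, represent the three quadratic forms as integrals of $1/\lambda$, $1/(1-\lambda)$ and $1/(\lambda(1-\lambda))$ against $d\|E_\lambda f\|^2$, and integrate the partial-fraction identity, using the same factorization only to settle the domain bookkeeping at the endpoints. Both arguments are complete; the paper's is shorter and entirely elementary, while yours makes the scalar origin $\tfrac{1}{\lambda(1-\lambda)}=\tfrac{1}{\lambda}+\tfrac{1}{1-\lambda}$ of the identity transparent and extends verbatim to other Borel functions of $M$. One phrasing caveat: the finiteness of $M^{-1}[f]$ and $(I-M)^{-1}[f]$ does not follow from the vanishing of the endpoint spectral masses alone; it follows because $f=M^{1/2}\bigl((I-M)^{1/2}h\bigr)=(I-M)^{1/2}\bigl(M^{1/2}h\bigr)$ places $f$ in $\ran M^{1/2}\cap\ran(I-M)^{1/2}$ (equivalently, because each of $1/\lambda$ and $1/(1-\lambda)$ is dominated by $1/(\lambda(1-\lambda))$ on $(0,1)$); since you already have the factorization in hand, this is a matter of wording rather than a gap.
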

\begin{proof}
It is sufficient to consider the case $\ker M=\ker(I-M)=\{0\}$. Then for $f=(M-M^2)^\half u$ we get
\[
M^{-\half}f=(I-M)^\half u,\;
(I-M)^{-\half}f=M^\half u,\; (M-M^2)^{-\half}f=u.
\]
Hence
\[
\begin{array}{l}
||(M-M^2)^{-\half}f||^2=||u||^2,\; ||M^{-\half}f||^2=||(I-M)^\half u||^2=||u||^2-||M^\half u||^2,\\
||(I-M)^{-\half}f||^2=||M^\half u||^2.
\end{array}
\]
This completes the  proof.
\end{proof}
\begin{theorem}\label{vspom2}
Let $A$ and $B$ be nonnegative selfadjoint l.r. and let
$$T_A:={\mathfrak C}(A),\; T_B:={\mathfrak C}(B)$$
be the Cayley transforms of $A$ and $B$, respectively.
Then
\begin{equation}\label{EQ11}
{\mathfrak C}\left(\half(A\dot +B)\right)= h(I+T_A,I+T_B)-I,
\end{equation}
\begin{equation}\label{EQ22}
{\mathfrak C}\left(h(A,B)\right)= I-h(I-T_A,I-T_B).
\end{equation}

\end{theorem}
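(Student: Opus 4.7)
The plan is to reduce both equalities to an identification of closed quadratic forms via the first representation theorem, using \eqref{AJHVF} to express the form of $A$ in terms of $T_A$ and \eqref{ranhalf} to identify forms of the form $\|(I+T)^{[-\half]}f\|^2$ as forms of $(I+T)^{-1}$. Then \eqref{EQ22} will follow from \eqref{EQ11} by substituting $A^{-1},B^{-1}$, using the relation ${\mathfrak C}(L^{-1})=-{\mathfrak C}(L)$ from \eqref{rtkbtr} and the identity $\tfrac{1}{2}(A^{-1}\dot+B^{-1})=(h(A,B))^{-1}$ from \eqref{jhfnyst}.

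For \eqref{EQ11}, I would first compute the closed form associated with $I+\tfrac{1}{2}(A\dot +B)$. By \eqref{AJHVF},
\[
A[f]=-\|f\|^2+2\|(I+T_A)^{[-\half]}f\|^2,\quad f\in\cD[A]=\ran(I+T_A)^\half,
\]
and analogously for $B$. Consequently, on $\cD[A]\cap\cD[B]=\ran(I+T_A)^\half\cap\ran(I+T_B)^\half$,
\[
\|f\|^2+\tfrac{1}{2}\bigl(A[f]+B[f]\bigr)=\|(I+T_A)^{[-\half]}f\|^2+\|(I+T_B)^{[-\half]}f\|^2.
\]
By \eqref{ranhalf} applied to the bounded nonnegative selfadjoint operators $I+T_A$ and $I+T_B$, the right-hand side equals $(I+T_A)^{-1}[f]+(I+T_B)^{-1}[f]$, which is precisely the closed form of the form sum $(I+T_A)^{-1}\dot+(I+T_B)^{-1}$ (whose form domain is also $\ran(I+T_A)^\half\cap\ran(I+T_B)^\half$). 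The first representation theorem then gives
\[
I+\tfrac{1}{2}(A\dot +B)=(I+T_A)^{-1}\dot+(I+T_B)^{-1},
\]
and inverting yields $\bigl(I+\tfrac{1}{2}(A\dot +B)\bigr)^{-1}=(I+T_A):(I+T_B)=\tfrac{1}{2}h(I+T_A,I+T_B)$ by Definition \ref{parsumne}. Multiplying by $2$ and subtracting $I$ gives \eqref{EQ11}.

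For \eqref{EQ22}, I would apply \eqref{EQ11} to the pair $A^{-1},B^{-1}$. Since ${\mathfrak C}(A^{-1})=-T_A$ and ${\mathfrak C}(B^{-1})=-T_B$, this gives
\[
{\mathfrak C}\!\left(\tfrac{1}{2}(A^{-1}\dot +B^{-1})\right)=h(I-T_A,I-T_B)-I.
\]
By \eqref{jhfnyst}, $\tfrac{1}{2}(A^{-1}\dot +B^{-1})=(h(A,B))^{-1}$, and by \eqref{rtkbtr} the Cayley transform of an inverse is the negative, so ${\mathfrak C}((h(A,B))^{-1})=-{\mathfrak C}(h(A,B))$. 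Combining the two yields $-{\mathfrak C}(h(A,B))=h(I-T_A,I-T_B)-I$, which is \eqref{EQ22}.

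The only real subtlety I anticipate is the rigorous matching of form domains in the first step: one must verify that $\cD[A]\cap\cD[B]$ indeed coincides with the form domain of the form sum $(I+T_A)^{-1}\dot+(I+T_B)^{-1}$, which by \eqref{ranhalf} is $\ran(I+T_A)^\half\cap\ran(I+T_B)^\half$; this identification is immediate from $\cD[A]=\ran(I+T_A)^\half$ (obtained from \eqref{AJHVF}), but worth stating explicitly so that the first representation theorem can be invoked unambiguously. Everything else is algebraic manipulation with the Cayley transform and the general identities \eqref{jhfnyst}.
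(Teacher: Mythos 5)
Your argument is correct, and for \eqref{EQ11} it takes a genuinely shorter route than the paper. The paper proves \eqref{EQ11} by invoking the factorization \eqref{yanash} of the bounded harmonic mean, writing $I+T_A=(L_A+L_B)^{\half}M(L_A+L_B)^{\half}$, $I+T_B=(L_A+L_B)^{\half}(I-M)(L_A+L_B)^{\half}$ for a nonnegative contraction $M$, computing the closed form of $\half(A\dot+B)$ through ranges of $M^{\half}$ and $(I-M)^{\half}$ with the help of Lemma \ref{dcgv1}, and only then matching the result against \eqref{AJHVF}. You bypass all of that: from \eqref{AJHVF} you read off that the form of $I+\half(A\dot+B)$ coincides (domain and values) with the form of $(I+T_A)^{-1}\dot+(I+T_B)^{-1}$, where the identification $\|(I+T_A)^{[-\half]}f\|^2=(I+T_A)^{-1}[f]$ on $\ran(I+T_A)^{\half}$ is exactly the content of \eqref{ranhalf}; uniqueness in the first representation theorem then gives $I+\half(A\dot+B)=(I+T_A)^{-1}\dot+(I+T_B)^{-1}$, and inverting is literally Definition \ref{parsumne}, so ${\mathfrak C}\left(\half(A\dot+B)\right)=-I+2\left(I+\half(A\dot+B)\right)^{-1}=h(I+T_A,I+T_B)-I$. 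What your route buys is self-containedness: you never need \eqref{yanash} (imported from \cite{Ar2}), Lemma \ref{dcgv1}, or the auxiliary operator $M$, and you work throughout with the paper's own relation-theoretic definition of $h$, so no consistency check with the classical bounded-operator parallel sum is required at this point (the paper's proof implicitly relies on that consistency, which is secured by Theorem \ref{Frmsm}). The derivation of \eqref{EQ22} from \eqref{EQ11} via $A\mapsto A^{-1}$, ${\mathfrak C}(L^{-1})=-{\mathfrak C}(L)$ and \eqref{jhfnyst} is exactly the paper's own second step. The only point worth stating explicitly, as you note, is the routine fact that the relation sum $I+\half(A\dot+B)$ is the selfadjoint relation associated with the form $\|f\|^2+\half(A\dot+B)[f]$ on $\cD[A]\cap\cD[B]$; this is standard for bounded perturbations and is all that is needed to apply the uniqueness argument.
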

\begin{proof}
Set
\[
\begin{array}{l}
\wt S:=\half(A\dot +B),\;L_A=I+T_A, L_B=I+T_B,\\[3mm]
 \wt T=h(I+T_A,I+T_B)-I, \;  \wt L=I+\wt T.
\end{array}
\]
Since
\[
\wt T\ge -I,\;\wt T\le\half(I+T_A+I+T_B)-I=\half(T_A+T_B)\le I,
\]
the operator $\wt T$ is a selfadjoint contraction.
Using \eqref{yanash} we get the representation
\begin{equation}
\label{repfur}
h(I+T_A,I+T_B)=I+\wt L=2(L_A+L_B)^{\half}(M-M^2)(L_A+L_B)^{\half},
\end{equation}
where $M$ is a nonnegative selfadjoint contraction acting in the subspace $\cran(L_A+L_B)$ and such
that
 \begin{equation}\label{jghfnj1}
L_A=(L_A+L_B)^{\half}M(L_A+L_B)^{\half}, \;L_B=(L_A+L_B)^{\half}(I-M)(L_A+L_B)^{\half}.
\end{equation}

From \eqref{AJHVF} we have
\[
\cD[A]=\ran L^\half_A,\; A[v]+||v||^2=2L^{-1}_A[v],\;v\in \cD[A],
\]
\[
\cD[B]=\ran L^\half_B,\; B[u]+||u||^2=2L^{-1}_B[u],\;u\in \cD[B],
\]
\[
\cD[\wt S]=\ran L^\half_A\cap\ran L^\half_B,\; \wt S[f]+||f||^2=L^{-1}_A[f]+L^{-1}_B[f],\;f\in \cD[\wt S].
\]
From \eqref{jghfnj1} one gets
\[
\begin{array}{l}
\left\{\begin{array}{l}\ran L^\half_A=(L_A+L_B)^\half\ran M^{\half},\\
L^{-1}_A[v]=\left\|M^{[-\half]}(L_A+L_B)^{[-\half]}v\right\|^2,\;\; v\in \ran L^\half_A\end{array}\right.,\\[3mm]
\left\{\begin{array}{l} \ran L^\half_B=(L_A+L_B)^\half\ran(I- M)^{\half},\\
L^{-1}_B[u]=\left\|(I-M)^{[-\half]}(L_A+L_B)^{[-\half]}u\right\|^2,\;\; u\in \ran L^\half_B\end{array}\right..
\end{array}
\]
Hence
\[
\left\{\begin{array}{l} \cD[\wt S]=(L_A+L_B)^\half\left(\ran M^\half\cap\ran (I-M)^\half\right)= (L_A+L_B)^\half\left(\ran(M-M^2)^\half\right),\\
\wt S[f]+||f||^2=\left\|M^{[-\half]}(L_A+L_B)^{[-\half]}f\right\|^2+\left\|(I-M)^{[-\half]}(L_A+L_B)^{[-\half]}f\right\|^2,\;f\in \cD[\wt S]\\
\end{array}\right..
\]
Now Lemma \ref{dcgv1} and \eqref{repfur} yield that
\[
\left\{\begin{array}{l} \cD[\wt S]=\ran (I+\wt L)^{\half},\\
\wt S[f]+||f||^2=(M-M^2)^{-1}[(L_A+L_B)^{[-\half]}f]=2||(I+\wt L)^{[-\half]}f||^2,\;f\in \cD[\wt S]
\end{array}\right..
\]
Taking into account \eqref{AJHVF} we obtain that ${\mathfrak C}(\wt S)=\wt L$.

Since
\[
{\mathfrak C}(A^{-1})=-T_A,\; {\mathfrak C}(B^{-1})=-T_B,
\]
we get
\[
{\mathfrak C}\left(\half(A^{-1}\dot+B^{-1})\right)=h(I-T_A, I-T_B)-I.
\]
Then
\[
{\mathfrak C}\left(h(A,B)\right)={\mathfrak C}\left(\left(\cfrac{A^{-1}\dot +B^{-1}}{2}\right)^{-1}\right)= I-h(I-T_A,I-T_B).
\]
\end{proof}
Now from \eqref{rtkbtr}, \eqref{AJHVF} and \eqref{EQ22} we get the equalities
\[
h(A,B)=\left\{\left\{\left(2I-h(I-\sC(A), I-\sC(B))\right)f,\left(h(I-\sC(A), I-\sC(B))\right)f\right\},\; f\in\cH\right\}
\]
\begin{equation}\label{ctqxfc}
\cD[A:B]=\cD[h(A,B)]=\ran\left(2I-h(I-\sC(A), I-\sC(B))\right)^{\half}.\\[3mm]
\end{equation}
The next statement is an analogue of \cite[Theorem 2.5]{PSh}.
\begin{corollary}\label{cylbv}
Let $\{A_n\}$ and $\{B_n\}$ be two sequences of nonnegative selfadjoint l.r.. Then in the strong resolvent convergence sense
$$A_n\nearrow A_\infty,\; B_n\nearrow B_\infty\Longrightarrow A_n\dot+B_n\nearrow A_\infty\dot+B_\infty,$$
$$A_n\searrow A_\infty,\;B_n\searrow B_\infty\Longrightarrow(A_n:B_n)\searrow (A_\infty:B_\infty).$$
\end{corollary}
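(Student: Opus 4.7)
The plan is to reduce both statements to the corresponding facts for bounded nonnegative selfadjoint operators (equation \eqref{ljbfd2}) via the Cayley transforms, using the identities \eqref{EQ11} and \eqref{EQ22} established in Theorem~\ref{vspom2}. The key bridge is the standard equivalence: for nonnegative selfadjoint l.r., strong resolvent convergence corresponds to strong convergence of the Cayley transforms, and monotonicity is reversed by \eqref{yjdjtyt}, i.e.\ $A_n\nearrow A_\infty$ in s-R sense $\Longleftrightarrow$ $\mathfrak C(A_n)\searrow \mathfrak C(A_\infty)$ strongly, and $A_n\searrow A_\infty$ in s-R sense $\Longleftrightarrow$ $\mathfrak C(A_n)\nearrow \mathfrak C(A_\infty)$ strongly.

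For the second assertion, set $T_{A_n}:=\mathfrak C(A_n)$ and $T_{B_n}:=\mathfrak C(B_n)$. Assuming $A_n\searrow A_\infty$ and $B_n\searrow B_\infty$ in s-R sense, I would first note that $I-T_{A_n}$ and $I-T_{B_n}$ are elements of $\bB^+(\cH)$ forming non-increasing sequences converging strongly to $I-T_{A_\infty}$ and $I-T_{B_\infty}$ respectively. Applying \eqref{ljbfd2} to the bounded operators $I-T_{A_n}$, $I-T_{B_n}$ gives strong convergence of $(I-T_{A_n}):(I-T_{B_n})$, hence of the harmonic means $h(I-T_{A_n},I-T_{B_n})$, to their analogue for $n=\infty$, with the sequence being non-increasing. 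By \eqref{EQ22}, $\mathfrak C(h(A_n,B_n))=I-h(I-T_{A_n},I-T_{B_n})$ is then non-decreasing and converges strongly to $\mathfrak C(h(A_\infty,B_\infty))$. Translating back via the Cayley transform, $h(A_n,B_n)\searrow h(A_\infty,B_\infty)$ in s-R sense, and since $A:B=\tfrac12 h(A,B)$ and s-R convergence (with the monotonicity type) is preserved under positive scalar multiplication, we obtain $(A_n:B_n)\searrow (A_\infty:B_\infty)$ in s-R sense.

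For the first assertion, I would argue symmetrically using \eqref{EQ11}. Assuming $A_n\nearrow A_\infty$ and $B_n\nearrow B_\infty$ in s-R sense, the operators $I+T_{A_n}$, $I+T_{B_n}\in\bB^+(\cH)$ form non-increasing sequences converging strongly to $I+T_{A_\infty}$, $I+T_{B_\infty}$. Again \eqref{ljbfd2} yields $h(I+T_{A_n},I+T_{B_n})\searrow h(I+T_{A_\infty},I+T_{B_\infty})$ strongly. By \eqref{EQ11}, $\mathfrak C\bigl(\tfrac12(A_n\dot+B_n)\bigr)=h(I+T_{A_n},I+T_{B_n})-I$, which therefore decreases strongly to $\mathfrak C\bigl(\tfrac12(A_\infty\dot+B_\infty)\bigr)$. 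Reversing the Cayley transform yields $\tfrac12(A_n\dot+B_n)\nearrow \tfrac12(A_\infty\dot+B_\infty)$ in s-R sense, and multiplying by the positive constant $2$ (which commutes with s-R limits and preserves monotonicity) gives $A_n\dot+B_n\nearrow A_\infty\dot+B_\infty$ in s-R sense.

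The only delicate point, which I would make explicit once at the start, is the equivalence between strong resolvent monotone convergence of nonnegative selfadjoint l.r.\ and strong monotone convergence of their Cayley transforms (with opposite direction). This is immediate from the definition of s-R convergence, the identity $I+\mathfrak C(A)=2(A+I)^{-1}$, and \eqref{yjdjtyt}; together with the compatibility of scalar multiplication by $\tfrac12$ and $2$ with both the monotonicity and s-R convergence, there is no genuine obstacle beyond careful bookkeeping of the direction of monotonicity as it passes through each Cayley transform.
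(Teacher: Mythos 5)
Your proposal is correct and follows essentially the same route as the paper: pass to the Cayley transforms, use the identities \eqref{EQ11} and \eqref{EQ22} together with the bounded-operator monotone convergence result \eqref{ljbfd2} (\cite[Theorem 2.5]{PSh}), and translate back via the monotonicity-reversing correspondence \eqref{yjdjtyt}. The only difference is presentational — you spell out both implications and the bookkeeping of the factor $2$, whereas the paper details the first and says ``similarly'' for the second.
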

\begin{proof}
Set $T_{A_n}:=\mathfrak C(A_n),$ $T_{B_n}:=\mathfrak C(B_n),\;n\in\dN,$ $T_{A_\infty}:=\mathfrak C(A_\infty),$ $T_{B_\infty}:=\mathfrak C(B_\infty).$

If $A_n\nearrow A_\infty$ and $B_n\nearrow B_\infty$ in the strong resolvent sense, then $T_{A_n}\searrow T_{A_\infty}$ and $T_{B_n}\searrow T_{B_\infty}$ in the strong sense.
It follows from \eqref{EQ11} and \cite[Theorem 2.5.]{PSh} that
\begin{multline*}
\mathfrak C\left(\half(A_n\dot+B_n)\right)=h(I+T_{A_n}, I+T_{B_n})-I\\=2( I+T_{A_n}: I+T_{B_n})-I\searrow 2(I+T_{A_\infty}:I+T_{B_\infty}) -I
 =h(I+T_{A_\infty},I+T_{B_\infty})-I.
\end{multline*}
Now
\[
\half(A_\infty\dot+B_\infty)=\mathfrak C\left(h(I+T_{A_\infty},I+T_{B_\infty})-I\right)={\rm s-R}-\lim\limits_{n\to \infty}\half(A_n\dot+B_n).
\]
Similarly, if $A_n\searrow A_\infty$ and $B_n\searrow B_\infty$, then, taking into account equality \ref{EQ22} and \cite[Theorem 2.5.]{PSh} once again, we get that $(A_n:B_n)\searrow (A_\infty:B_\infty).$
\end{proof}

For nonnegative selfadjoint l.r. $A$ and $B$ let us define a nonnegative selfadjoint l.r. $c_0(A,B)$ as follows
\begin{multline}\label{chtlytt}
c_0(A,B):=\mathfrak C\left(\frac{{\mathfrak C}(A)+ {\mathfrak C}(B)}{2}\right)\\
=\left\{\left\{\left(I+\frac{{\mathfrak C}(A)+ {\mathfrak C}(B)}{2}\right)f,\left(I-\frac{{\mathfrak C}(A)+ {\mathfrak C}(B)}{2}\right)f\right\}:\; f\in\cH\right\}.
\end{multline}
If $a$ and $b$ are positive numbers, then
\begin{equation}\label{chtlyttt}
c_0(a,b)=\cfrac{a+b+2ab}{2+a+b}.
\end{equation}
Observe that the following inequalities are valid
\[
\cfrac{2ab}{a+b}\le \cfrac{a+b+2ab}{2+a+b}\le \cfrac{a+b}{2},
\]
and the sign $"="$ holds if and only if $a=b.$
\begin{theorem}\label{ythdj}
Let $A$ and $B$ be a nonnegative selfadjoint l.r.. Then
\begin{enumerate}
\item $\cD[c_0(A,B)]=\cD[A]+\cD[B];$
\item $\ran \left(c_0(A,B)\right)^\half=\ran A^\half+\ran B^\half;$
\item the inclusion
\begin{equation}\label{hjhji}
\cD[A:B]\supseteq\cD[A]+\cD[B]
\end{equation}
holds and the inequality
\[
h(A,B)\le c_0(A,B)\le \cfrac{A\dot+B}{2}
\]
is valid; moreover, the sign $"="$ holds if and only if $A=B$.
\end{enumerate}
In addidion
\begin{equation}\label{hjhji2}
\ran(A\dot+B)^{\half}\supseteq\ran A^\half+\ran B^\half.
\end{equation}
\end{theorem}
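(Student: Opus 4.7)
The strategy is to reduce everything to statements about the bounded selfadjoint contractions $T_A:=\mathfrak C(A)$, $T_B:=\mathfrak C(B)$ via \eqref{AJHVF} and Theorem \ref{vspom2}, and then invoke two facts about $\bB^+(\cH)$: the range identity
\[
\ran(C+D)^{\half}=\ran C^{\half}+\ran D^{\half},\qquad C,D\in\bB^+(\cH),
\]
(a one-line consequence of Douglas' factorization applied to the column $\Phi\colon g\mapsto(C^{\half}g,D^{\half}g)$, since $\Phi^{*}\Phi=C+D$ and $\ran\Phi^{*}=\ran C^{\half}+\ran D^{\half}=\ran(\Phi^{*}\Phi)^{\half}$), and the classical AM--HM inequality $h(C,D)\le\tfrac12(C+D)$ with equality iff $C=D$, already recalled in the excerpt.

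For items (1) and (2), since $c_0(A,B)=\mathfrak C((T_A+T_B)/2)$, formula \eqref{AJHVF} gives
\[
\cD[c_0(A,B)]=\ran\Bigl(\tfrac{(I+T_A)+(I+T_B)}{2}\Bigr)^{\!\half},\quad \ran c_0(A,B)^{\half}=\ran\Bigl(\tfrac{(I-T_A)+(I-T_B)}{2}\Bigr)^{\!\half}.
\]
The range identity applied with $C=I\pm T_A$ and $D=I\pm T_B$, together with $\cD[A]=\ran(I+T_A)^{\half}$, $\ran A^{\half}=\ran(I-T_A)^{\half}$ (and analogously for $B$), finishes both equalities.

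For (3), Theorem \ref{vspom2} and the order-reversing property \eqref{yjdjtyt} translate the chain $h(A,B)\le c_0(A,B)\le\tfrac12(A\dot+B)$ into the two bounded AM--HM inequalities
\[
h(I-T_A,I-T_B)\le\tfrac{(I-T_A)+(I-T_B)}{2},\qquad h(I+T_A,I+T_B)\le\tfrac{(I+T_A)+(I+T_B)}{2},
\]
both standard. The equality case transports as well; the only delicate point is the implication ``equality in bounded AM--HM forces $C=D$'', which I would derive from the representation \eqref{yanash}, whence $h(C,D)=\tfrac{C+D}{2}$ gives $(2K-I)^{2}=0$ on $\cran(C+D)$ and hence $K=\tfrac12 I$, i.e.\ $C=D$. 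Applied with $C=I\mp T_A$, $D=I\mp T_B$ this yields $T_A=T_B$, i.e.\ $A=B$. The domain inclusion $\cD[A:B]\supseteq\cD[A]+\cD[B]$ is then immediate from $c_0(A,B)\ge h(A,B)=2(A:B)$ combined with part (1), since the larger form has the smaller domain, so $\cD[A:B]=\cD[h(A,B)]\supseteq\cD[c_0(A,B)]=\cD[A]+\cD[B]$.

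For the supplementary inclusion \eqref{hjhji2}, the inequality $\tfrac12(A\dot+B)\ge c_0(A,B)$ gives $(A\dot+B)^{-1}\le 2\,c_0(A,B)^{-1}$, hence $\cD[c_0(A,B)^{-1}]\subseteq\cD[(A\dot+B)^{-1}]$, i.e.\ $\ran c_0(A,B)^{\half}\subseteq\ran(A\dot+B)^{\half}$, and part (2) identifies the left-hand side with $\ran A^{\half}+\ran B^{\half}$. The only ingredient used that is not already in the excerpt is the bounded range identity for square roots of sums, and this is the main (minor) obstacle; everything else is a mechanical transport of bounded-operator facts through the Cayley correspondence.
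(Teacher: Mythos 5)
Your proposal is correct and follows essentially the same route as the paper: reduction to the Cayley transforms via \eqref{AJHVF}, \eqref{EQ11}, \eqref{EQ22}, \eqref{yjdjtyt}, the bounded AM--HM inequality with its equality case (which the paper records at the start of Section \ref{neravfa} via \eqref{yanash}, exactly as you flesh out with $(2K-I)^2=0$), and the range identity $\ran(C+D)^{\half}=\ran C^{\half}+\ran D^{\half}$, which the paper cites from \cite[Theorem 2.2]{FW} and you re-derive by the Douglas column-operator trick. The only cosmetic deviation is that you deduce \eqref{hjhji} and \eqref{hjhji2} directly from the order relations $h(A,B)\le c_0(A,B)\le \half(A\dot+B)$ (using that a larger relation has a smaller form domain, and inversion plus part (2) for \eqref{hjhji2}), whereas the paper gets the same inclusions from Douglas' theorem applied to $2I-h(I\pm T_A,I\pm T_B)\ge \frac{I\mp T_A}{2}+\frac{I\mp T_B}{2}$ at the Cayley-transform level; both are valid and of equal strength.
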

\begin{proof}
Set
\[
T_A:={\mathfrak C}(A),\;T_B:={\mathfrak C}(B).
\]
Because $I\pm T_A$ and $I\pm T_B$ are bounded nonnegative operators, we have the inequalities
\[
h(I\pm T_A, I\pm T_B)\le \half(I\pm T_A+I\pm T_B)=I\pm \half(T_A+T_B).
\]
Using \eqref{EQ11}, \eqref{EQ22}, and \eqref{chtlytt} we get
\begin{multline*}
h(I+T_A, I+T_B)-I\le\half(T_A+T_B)\le I-h(I-T_A, I-T_B)\\
\Longleftrightarrow \mathfrak C\left(\half(A\dot+B)\right)\le \mathfrak C\left(c_0(A,B)\right)\le \mathfrak C(h(A,B))
\Longleftrightarrow h(A,B)\le c_0(A,B)\le \half(A\dot+B).
\end{multline*}

If $A=B$, then, clearly,
\[
c_0(A,B)=A,\;\half(A\dot+B)=A,\; \half(A^{-1}\dot+B^{-1})=A^{-1}\Longrightarrow h(A,B)=A.
\]
Thus, $h(A,B)=c_0(A,B)=\half(A\dot+B).$

Suppose that $h(A,B)=\half(A\dot+B)$ for some nonnegative selfadjoint l.r. $A$ and $B$. Then
\[
I-h(I-T_A, I-T_B)=h(I+T_A, I+T_B)-I.
\]
Since
\[
\half(T_A+T_B)\le I-h(I-T_A, I-T_B)  =h(I+T_A, I+T_B)-I \le\half(T_A+T_B),
\]
we get
\[
h(I+T_A, I+T_B)-I=\half(T_A+T_B)\Longleftrightarrow h(I+ T_A, I+ T_B)= \half(I+ T_A+I+ T_B).
\]
Because $I+T_A$ and $I+T_B$ are bounded, the latter equality yields the equality $T_A=T_B$ and hence
$$A={\mathfrak C}(T_A)={\mathfrak C}(T_B)=B.$$
Similarly, one can prove that $h(A,B)=c_0(A,B)\Longrightarrow A=B$, and $c_0(A,B)=\half(A\dot+B)\Longrightarrow A=B.$

From \eqref{AJHVF} and \eqref{EQ22} we have
\[
\begin{array}{l}
\cD[A:B]=\cD[h(A,B)]=\ran\left(2I-h(I-T_A,I-T_B)\right)^{\half},\\[3mm]
\cD[c_0(A,B)]=\ran\left(\cfrac{I+T_A}{2} +\cfrac{I+T_B}{2}\right)^{\half},\;
\ran\left(c_0(A,B)\right)^\half=\ran \left(\cfrac{I-T_A}{2} +\cfrac{I-T_B}{2}\right)^{\half}.
\end{array}
\]
Since
\[
h(I\pm T_A,I\pm T_B)\le\cfrac{(I\pm T_A)+(I\pm T_B)}{2}=I\pm\cfrac{T_A+T_B}{2},
\]
we obtain the inequality
\[
2I-h(I\pm T_A,I\pm T_B)\ge\cfrac{I\mp T_A}{2} +\cfrac{I\mp T_B}{2}.
\]
Now the Douglas theorem \cite{Doug} yields the inclusions
\[
\ran\left(2I-h(I\pm T_A,I\pm T_B)\right)^{\half}\supseteq\ran\left(\cfrac{I\mp T_A}{2} +\cfrac{I\mp T_B}{2}\right)^{\half}.
\]
But, see \cite[Theorem 2.2]{FW},
\[
\ran\left(\cfrac{I\pm T_A}{2} +\cfrac{I\pm T_B}{2}\right)^{\half}=\ran (I\pm T_A)^{\half}+\ran (I\pm T_B)^{\half}.
\]
Using \eqref{ctqxfc} and equalities
\begin{multline*}
\cD[A]=\ran (I+T_A)^{\half},\;\cD[B]=\ran (I+T_B)^{\half},\\
\ran A^\half=\ran (I-T_A)^\half,\;\ran B^\half=\ran (I-T_A)^\half
\end{multline*}
we get  the equalities
$$\cD[c_0(A,B)]=\cD[A]+\cD[B],\;\ran\left (c_0(A,B)\right)^\half =\ran A^\half+\ran B^\half,$$
and
finally arrive at \eqref{hjhji}.
Since
\[
\ran\left(2I-h(I+T_A,I+T_B)\right)^\half\supseteq \ran\left((I-T_A)+(I-T_B)\right)^{\half},
\]
we get \eqref{hjhji2}.

The proof is complete.

\end{proof}
As a consequence of \eqref{hjhji} we note, that
if one of two nonnegative selfadjoint l.r. is a linear operator, then their parallel sum is a nonnegative selfadjoint linear operator as well, moreover, if one of two
is a bounded operator, then the parallel sum is a bounded operator.

\section{Arithmetic--harmonic means} \label{fhbaufh}
We consider analogs of the arithmetic-harmonic means for the case of nonnegative selfadjoint l.r..
\begin{theorem}\label{arharm}
Let $A$ and $B$ be nonnegative selfadjoint l.r.. 
Define
\begin{equation}\label{fhbauf1}
A_0:=A,\; B_0:=B,\; A_{n}:=\half(A_{n-1}\dot+B_{n-1}),\; B_{n}:=h(A_{n-1},B_{n-1}),\; n\in\dN.
\end{equation}
Then the sequence $\{A_n\}$ is a non-increasing, while the sequence $\{B_n\}$ is a non-decreasing and $A_n\ge B_n$ for all $n\in\dN$.
Moreover, the nonnegative selfadjoint l.r.
\begin{equation}\label{harar1}
A_{\infty}:={\rm s-R}-\lim\limits_{n\to\infty}A_n,\;B_{\infty}:={\rm s-R}-\lim\limits_{n\to\infty}B_n
\end{equation}
posses the following properties:
\[
\begin{array}{l}
h(A,B)\le B_\infty\le A_\infty\le \half(A\dot+B),\\[2mm]
\cD[B_\infty]\supseteq \cD[A_\infty]\supseteq\cD[A]\cap\cD[B],\\[2mm]
\ran A^{\half}_\infty\supseteq\ran B^\half_\infty\supseteq\ran A^\half\cap\ran B^{\half},\\[2mm]
A_\infty[\f]=B_\infty[\f]\;\;\forall \f\in\cD[A_\infty],\\[2mm]
A^{-1}_\infty[\psi]=B^{-1}_\infty[\psi]\;\;\forall \psi\in\ran B^{\half}_\infty.
\end{array}
\]
If $A^{-1}$ and $B^{-1}$ are bounded (i.e. they are graphs of bounded nonnegative selfadjoint operators), then
\[
A_\infty=B_\infty=(A^{-1}\#B^{-1})^{-1}.
\]

\end{theorem}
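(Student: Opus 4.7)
The plan is to proceed in three main steps: monotonicity and existence of the strong resolvent limits, derivation of the two structural identities $A_\infty = \half(A_\infty \dot+ B_\infty)$ and $B_\infty = h(A_\infty, B_\infty)$ via the Cayley transform (from which all the form-equality content follows), and identification in the bounded-inverse case by reduction to the classical operator arithmetic--harmonic mean result.

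For monotonicity, the inequality $B_n \le A_n$ is immediate from Theorem \ref{ythdj} by induction. Using $A \dot+ A = 2A$ (trivial from the associated form) and $A:A = \half A$ (whence $h(A,A) = A$), together with the monotonicity of the form sum and of the parallel sum in each argument (property (5) of Theorem \ref{Frmsm}), the inductive hypothesis $B_{n-1} \le A_{n-1}$ gives
\[
A_n = \half(A_{n-1} \dot+ B_{n-1}) \le \half(A_{n-1} \dot+ A_{n-1}) = A_{n-1}, \qquad B_n = h(A_{n-1}, B_{n-1}) \ge h(B_{n-1}, B_{n-1}) = B_{n-1}.
\]
The strong resolvent limits \eqref{harar1} then exist by monotone convergence of nonnegative selfadjoint linear relations (Theorem 3.7 of \cite{BHSW2010}), and $h(A, B) = B_1 \le B_\infty \le A_\infty \le A_1 = \half(A \dot+ B)$ follows by passing to the limit.

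The heart of the argument is the derivation of the two structural identities. Transferring to Cayley transforms $T_n := \mathfrak{C}(A_n)$ and $U_n := \mathfrak{C}(B_n)$, which by \eqref{yjdjtyt} satisfy $T_n \nearrow T_\infty = \mathfrak{C}(A_\infty)$ and $U_n \searrow U_\infty = \mathfrak{C}(B_\infty)$ strongly in $[-I, I]$, Theorem \ref{vspom2} rewrites the iteration as
\[
I + T_{n+1} = 2\bigl((I+T_n):(I+U_n)\bigr), \qquad I - U_{n+1} = 2\bigl((I-T_n):(I-U_n)\bigr),
\]
with parallel sums now of uniformly bounded operators in $[0, 2I]$. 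The principal obstacle is that the two arguments of each parallel sum move monotonically in opposite directions, so Corollary \ref{cylbv} does not apply directly. I would handle this via the sandwich
\[
(I+T_n):(I+U_\infty) \le (I+T_n):(I+U_n) \le (I+T_\infty):(I+U_n),
\]
given by monotonicity of the parallel sum, in which both outer terms converge strongly to $(I+T_\infty):(I+U_\infty)$: the right one directly by Corollary \ref{cylbv} with the first argument held fixed, and the left one by the duality $X:Y = (X^{-1} \dot+ Y^{-1})^{-1}$, Cayley-transform reversal giving $(I+T_n)^{-1} \searrow (I+T_\infty)^{-1}$, and the non-increasing analogue of the form-sum convergence. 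Passing to the limit in the recursion then yields $I + T_\infty = h(I+T_\infty, I+U_\infty)$, which by Theorem \ref{vspom2} and injectivity of $\mathfrak{C}$ is equivalent to $A_\infty = \half(A_\infty \dot+ B_\infty)$; the identical argument applied to the $I-T$, $I-U$ recursion gives $B_\infty = h(A_\infty, B_\infty)$. The form equalities are then automatic: $A_\infty \ge B_\infty$ implies $\cD[A_\infty] \subseteq \cD[B_\infty]$, so the domain of $\half(A_\infty \dot+ B_\infty)$ is exactly $\cD[A_\infty]$, and matching the associated forms forces $A_\infty[\f] = B_\infty[\f]$ on $\cD[A_\infty]$; the inverse-form equality on $\ran B_\infty^\half$ follows from $B_\infty^{-1} = \half(A_\infty^{-1} \dot+ B_\infty^{-1})$ in the same way.

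Finally, the remaining inclusions $\cD[A_\infty] \supseteq \cD[A] \cap \cD[B]$ and $\ran B_\infty^\half \supseteq \ran A^\half \cap \ran B^\half$ follow by tracing monotonicity back to $A_1$ and $B_1$ and using Theorem \ref{ythdj}. In the bounded-inverse case, an induction using Theorem \ref{Frmsm} and \eqref{jhfnyst} keeps $A_n^{-1}$ and $B_n^{-1}$ bounded for every $n$, with
\[
B_n^{-1} = \half\bigl(A_{n-1}^{-1} + B_{n-1}^{-1}\bigr), \qquad A_n^{-1} = h\bigl(A_{n-1}^{-1}, B_{n-1}^{-1}\bigr).
\]
Hence $(B_n^{-1}, A_n^{-1})$ executes the classical arithmetic--harmonic recursion for bounded nonnegative operators with initial data $(B^{-1}, A^{-1})$, and by \cite{AMT} both sequences converge strongly to the geometric mean $A^{-1} \# B^{-1}$. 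The identity $(I + A_n)^{-1} = I - (I + A_n^{-1})^{-1}$, which is just $\mathfrak{C}(A_n^{-1}) = -\mathfrak{C}(A_n)$, then translates this into strong resolvent convergence of $A_n$ and $B_n$ to $(A^{-1} \# B^{-1})^{-1}$, completing the identification $A_\infty = B_\infty = (A^{-1} \# B^{-1})^{-1}$.
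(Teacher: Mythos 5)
Your treatment of the monotonicity of the two sequences, the existence of the strong resolvent limits, the endpoint inequalities $h(A,B)\le B_\infty\le A_\infty\le \half(A\dot+B)$, the domain and range inclusions, and the bounded-inverse case is correct and essentially what the paper does. The gap is in your central step, the passage to the limit in the Cayley-transformed recursion. The left half of your sandwich needs the parallel sum to be strongly continuous in one argument along a \emph{non-decreasing} sequence: $X_n\nearrow X_\infty$ strongly (uniformly bounded) should force $X_n:Y\to X_\infty:Y$; equivalently, via $X:Y=(X^{-1}\dot+Y^{-1})^{-1}$, that the form sum passes to strong resolvent limits along \emph{non-increasing} sequences. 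Neither statement is available in the paper (\eqref{ljbfd2} and Corollary \ref{cylbv} give exactly the opposite monotonicity pattern: $\nearrow$ for $\dot+$, $\searrow$ for $:$), and both are false in general. Indeed, take an orthonormal basis $\{e_k\}_{k\ge 0}$ of $\ell^2$, set $e:=e_0$, $\cM_n:={\rm span}\{e_0+e_k:\,1\le k\le n\}$, $X_n:=P_{\cM_n}$ and $Yf:=(f,e)e$. Then $X_n\nearrow I$ strongly (the union of the $\cM_n$ is dense), while $e\notin\cM_n$ for every $n$, so $\ran X_n^{\half}\cap\ran Y^{\half}=\{0\}$ and hence $X_n:Y=0$ for all $n$ by \eqref{khfy} (or Theorem \ref{Frmsm}(7)), whereas $I:Y=\half Y\ne 0$. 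Consequently your sandwich only delivers the one-sided inequality $\half(I+T_\infty)\le (I+T_\infty):(I+U_\infty)$ coming from the downward-continuous right-hand side, and the fixed-point identities $A_\infty=\half(A_\infty\dot+B_\infty)$, $B_\infty=h(A_\infty,B_\infty)$ --- which carry the entire content of the asserted form equalities --- are left unproven.

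Those identities are in fact true, but the paper reaches the form equalities by a quantitative argument that avoids any interchange of limits with $\dot+$ or $:$. For $f\in\cD[A]\cap\cD[B]$, which is the common form domain $\cD[A_n]$ for all $n$, one has the telescoping bound $0\le A_{n+1}[f]-B_{n+1}[f]\le\half\left(A_n[f]-B_n[f]\right)$, hence $0\le A_{n+1}[f]-B_{n+1}[f]\le 2^{-n}\left(A_1[f]-B_1[f]\right)$ and $\lim_n A_n[f]=\lim_n B_n[f]=B_\infty[f]$ there; since this limit form is a restriction of the closed form $B_\infty[\cdot,\cdot]$, it is closable, and the monotone convergence theorems for non-increasing sequences of closed forms (Simon; Behrndt--Hassi--de Snoo--Wietsma) identify its closure with $A_\infty[\cdot,\cdot]$, giving $A_\infty[\f]=B_\infty[\f]$ for all $\f\in\cD[A_\infty]$; the statement for the inverses follows by running the same argument on $A_n^{-1}=h(A_{n-1}^{-1},B_{n-1}^{-1})$, $B_n^{-1}=\half(A_{n-1}^{-1}\dot+B_{n-1}^{-1})$. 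To salvage your Cayley-transform route you would have to replace the invalid general continuity claim by an argument of this kind (or by a special property of the particular sequences $I+T_n$, $I+U_n$), at which point you are essentially reproducing the paper's estimate.
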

\begin{proof}
By induction from the definitions of form sums, harmonic means and from Theorem \ref{ythdj} one can show that
\[
\begin{array}{l}
\cD[A_n]=\cD[A]\cap\cD[B],\; \cD[B_n]\supseteq\cD[A]+\cD[B],\\
 h(A,B)=B_1\le B_{n}\le B_{n+1}\le A_{n+1}\le A_{n}\le A_1=\half(A\dot+B)\;\;\forall n\in\dN.
 \end{array}
\]
Since the sequence $\{B_n\}$ is a non-decreasing and the sequence $\{A_n\}$ is a non-increasing, from \cite[Theorem VII.3.11]{Ka}, \cite[Theorem 3.1, Theorem 3.2]{Simon1978}, \cite[Theorem 4.2, Theorem 4.3]{BHSW2010} it follows that the strong resolvent limits of $\{A_n\}$ and $\{B_n\}$ exist and
\[
h(A,B)\le B_\infty\le A_\infty\le \half(A\dot+B),\;\cD[B_\infty]\supseteq \cD[A_\infty].
\]
Since
\[
\cD[B_\infty]=\left\{h\in\bigcap\limits_{n=1}^\infty\cD[B_n]:\lim\limits_{n\to\infty}B_n[h]<\infty\right\}
\]
and $B_n\le A_1$ for all $n\in\dN$, $\bigcap\limits_{n=1}^\infty \cD[B_n]\supseteq\cD[A]\cap\cD[B]=\cD[A_1]$, we get that
$$\cD[B_\infty]\supseteq\cD[A]\cap\cD[B].$$

Fix $n\in\dN$. Then for any $f\in\cD[A]\cap \cD[B]$ by definition  and from $B_{n+1}\ge B_n$ we have
\begin{multline*}
A_{n+1}[f]-B_{n+1}[f]=\half A_n[f]+\half B_{n}[f]-B_{n+1}[f]\\
\le \half A_n[f]+\half B_{n}[f]-B_n[f]\le\cfrac{A_n[f]-B_n[f]}{2} .
\end{multline*}
Hence
\[
0\le A_{n+1}[f]-B_{n+1}[f]\le \cfrac{A_1[f]-B_1[f]}{2^n}\;\;\forall f\in\cD[A]\cap \cD[B],\;\forall n\in\dN.
\]
It follows that
\begin{equation}\label{novlim}
\lim\limits_{n\to \infty}A_n[f]=\lim\limits_{n\to\infty}B_n[f]=B_\infty[f]\;\;\forall f\in\cD[A]\cap\cD[B].
\end{equation}
Define the sesquilinear form $\mathfrak t$ as follows
\[
\dom\mathfrak t=\bigcup\limits_{n=1}^\infty\cD[A_n],\; \mathfrak t[f,g]=\lim\limits_{n\to\infty}A_n[f,g],\; f,g\in\dom \mathfrak t.
\]
Since $\cD[A_n]=\cD[A]\cap\cD[B]$, we have the equality $\dom\mathfrak t=\cD[A]\cap\cD[B]$ and from \eqref{novlim}:
\[
\mathfrak t[f,g]=B_\infty[f,g]\;\;\forall f,g,\in\cD[A]\cap\cD[B].
\]
Since the form $B_\infty[\cdot,\cdot]$ restricted on $\cD[A]\cap\cD[B]$ is closable, using \cite[Theorem 3.2]{Simon1978}, \cite[ Theorem 4.3]{BHSW2010},
we obtain that the closure of the form $\mathfrak t[\cdot,\cdot]$ coincides with $A_\infty[\cdot,\cdot]$. Hence $A_\infty[\f]=B_\infty[\f]$ for all $\f\in \cD[A_\infty].$

Since
\[
\begin{array}{l}
A^{-1}_n=2\left(A_{n-1}\dot+B_{n-1}\right)^{-1}=h(A^{-1}_{n-1}, B^{-1}_{n-1}),\\[3mm]
B^{-1}_n=\left(h(A_{n-1}, B_{n-1})\right)^{-1}=\half(A^{-1}_{n-1}\dot+B^{-1}_{n-1}),\; n\in\dN,
\end{array}
\]
the sequences $\{A^{-1}_n\},\{B^{-1}_n\}\subset\bB^{+}(\cH)$ are the non-decreasing and non-increasing, respectively, and
\[
{\rm s-R}-\lim\limits_{n\to\infty}A^{-1}_n=A^{-1}_{\infty},\;{\rm s-R}-\lim\limits_{n\to\infty}B^{-1}_n=B^{-1}_{\infty}.
\]
Arguing as above we see that
$$ B^{-1}_\infty\ge A^{-1}_\infty,\;\ran A^{\half}_\infty\supseteq \ran B^{\half}_\infty\supseteq\ran A^{\half}\cap\ran B^{\half},$$
and
$$A^{-1}_\infty[\psi]=B^{-1}_\infty[\psi]\;\;\forall \psi\in \ran B^\half_\infty.$$

Suppose that $A^{-1}_0=A^{-1}$ and $B^{-1}_0=B^{-1}$ are bounded. Then from \eqref{jhfnyst} 
it follows that $\{A^{-1}_n\},\{B^{-1}_n\}\subset\bB^+(\cH)$, and there is a common limit
\[
{\rm s}-\lim\limits_{n\to\infty}A^{-1}_n={\rm s}-\lim\limits_{n\to\infty}B^{-1}_n=A^{-1}\#B^{-1}.
\]
Due to the equalities
\[
A_\infty=\left({\rm s}-\lim\limits_{n\to\infty}A^{-1}_n\right)^{-1},\;B_\infty=\left({\rm s}-\lim\limits_{n\to\infty}B^{-1}_n\right)^{-1},
\]
we get $A_\infty=B_\infty=\left(A^{-1}\#B^{-1}\right)^{-1}.$
The proof is complete.
\end{proof}

In the sequel we will denote by $ah(A,B)$ the arithmetic--harmonic means of nonnegative selfadjoint l.r. $A$ and $B$. By Theorem \ref{arharm} we have
\[
ah(A,B)=\left<A_\infty, B_\infty\right>,
\]
where $A_\infty$ and $B_\infty$ are defined in \eqref{fhbauf1} and \eqref{harar1}.
Then from \eqref{jhfnyst}
\[
ah(A^{-1}, B^{-1})=\left<B^{-1}_\infty, A^{-1}_\infty\right>.
\]

The next corollaries, show that in general $A_\infty\ne B_\infty$.
\begin{corollary}\label{dikij}
Let $B$ be a nonnegative selfadjoint l.r.. Define
\[
\begin{array}{l}
\cB':=\{\cdom B\oplus\{0\}\}\bigoplus\{\{0\}\oplus\mul B\}=\left\{\{\f, \psi\}:\f\in\cdom B,\psi\in\mul B\right\},\\
\cB'':=\{\ker B\oplus\{0\}\}\bigoplus\{\{0\}\oplus\cran B\}=\left\{\{f, g\}:f\in\ker B,g\in\cran B\right\}.
\end{array}
\]
Then
\begin{enumerate}
\item for $A=0$ ($\dom A=\cH$) one has $ah(A,B)=\left<\cB',A\right>,$
\item for $A=\{0\}\oplus\cH$ one has $ah(A,B)=\left<A, \cB''\right>.$
\end{enumerate}
\end{corollary}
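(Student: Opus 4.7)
The strategy is to compute the sequences $\{A_n\}$ and $\{B_n\}$ from \eqref{fhbauf1} explicitly in each case and then identify their strong resolvent limits through the Cayley transform \eqref{rtkbtr}.

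For case (1), since $A = 0$ is the bounded zero operator its form vanishes on all of $\cH$, so $\cD[A\dot+B] = \cD[B]$ and $(A\dot+B)[\f]=B[\f]$, giving $A \dot+ B = B$ and $A_1 = \half B$. On the other hand, $A^{-1} = \{0\} \oplus \cH$ has form domain $\ran A^{\half} = \{0\}$, so $A^{-1} \dot+ B^{-1} = \{0\} \oplus \cH$ and therefore $A : B = 0$, giving $B_1 = 0$. An immediate induction yields $A_n = 2^{-n} B$ and $B_n = 0$ for all $n \ge 1$, so $B_\infty = 0 = A$. Using the orthogonal decomposition $B = \mathrm{graph}(B_o) \oplus (\{0\} \oplus \mul B)$, one verifies directly that $(\varepsilon B + I)^{-1} = (I_{\cdom B} + \varepsilon B_o)^{-1} P_{\cdom B}$, which converges strongly to $P_{\cdom B}$ as $\varepsilon \downarrow 0$. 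Hence $A_\infty$ is the nonnegative selfadjoint l.r.\ with $(A_\infty + I)^{-1} = P_{\cdom B}$, equivalently ${\mathfrak C}(A_\infty) = 2 P_{\cdom B} - I$, and \eqref{rtkbtr} gives
\[
A_\infty = \{\{2 P_{\cdom B} f,\; 2 P_{\mul B} f\} : f \in \cH\} = \cB',
\]
completing (1).

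For case (2), an analogous direct computation applies: $A = \{0\} \oplus \cH$ has form domain $\{0\}$, so $A \dot+ B = A$ and $A_1 = A$, while $A^{-1} = 0$ gives $A^{-1} \dot+ B^{-1} = B^{-1}$ and $A : B = B$, whence $B_1 = 2B$. By induction $A_n \equiv A$ and $B_n = 2^n B$, so $A_\infty = A$. The same resolvent identity $(sB + I)^{-1} = (I_{\cdom B} + sB_o)^{-1} P_{\cdom B}$ now converges strongly to $P_{\ker B_o} P_{\cdom B} = P_{\ker B}$ as $s \uparrow +\infty$ (via the spectral calculus for $B_o$), so ${\mathfrak C}(B_\infty) = 2 P_{\ker B} - I$ and \eqref{rtkbtr} gives $B_\infty = \{\{2P_{\ker B} f,\; 2 P_{\cran B} f\} : f \in \cH\} = \cB''$. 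Alternatively, case (2) follows from case (1) via the duality $ah(A^{-1}, B^{-1}) = \langle B^{-1}_\infty, A^{-1}_\infty \rangle$ noted right after Theorem \ref{arharm}, upon checking that $(\cB'(B^{-1}))^{-1} = \cB''(B)$ using $\cdom B^{-1} = \cran B$ and $\mul B^{-1} = \ker B$.

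The only delicate step is to justify the strong operator convergence $(\varepsilon B + I)^{-1} \to P_{\cdom B}$ and $(sB + I)^{-1} \to P_{\ker B}$, which requires carefully separating the action of the (possibly unbounded) operator part $B_o$ on $\cdom B$ from the multivalued part $\mul B$; once these limits are in hand, reading off $A_\infty$ and $B_\infty$ from their Cayley transforms is mechanical.
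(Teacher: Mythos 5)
Your proposal is correct and follows essentially the same route as the paper: compute the iterates explicitly ($A_n=2^{-n}B$, $B_n=A$ in case (1); constant $A_n$ and $B_n=2^nB$ in case (2)) and identify the strong resolvent limits, with the paper settling case (2) precisely by the duality $ah(A^{-1},B^{-1})=\left<B_\infty^{-1},A_\infty^{-1}\right>$ that you also give as your alternative. Your verification that $(2^{-n}B+I)^{-1}\to P_{\cdom B}$ and $(2^{n}B+I)^{-1}\to P_{\ker B}$ strongly, and the reading-off of $\cB'$ and $\cB''$ through the Cayley transform, merely supplies details the paper leaves implicit.
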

\begin{proof}
(1) If $A$ is zero operator, defined on $\cH$, then for sequences $\{A_n\}$ and $\{B_n\}$, defined in \eqref{fhbauf1}, we have $A_1:=\half(A\dot+B)=\half B$, $B_1:=h(A,B) =A,$
\[
A_n=\cfrac{1}{2^n}B,\; B_n=A,\; n\in\dN.
\]
Thus $A_\infty=\cB'$, $B_\infty=A$.

(2) If $A=\{0\}\oplus\cH$, then $A^{-1}$ is the zero operator defined on $\cH$. Hence we can use \eqref{jhfnyst} and the arguments above.
\end{proof}
\begin{corollary}\label{dikij2}
Let $A$ be a nonnegative selfadjoint l.r.. Set
\begin{equation}\label{bgh1}
\sM:=\mul A\oplus\ker A.
\end{equation}
Then $ah(A,A^{-1})=\left< \left(P_{\sM^\perp}\right)^{-1}, P_{\sM^\perp}\right>$.
In particular $ah(A,A^{-1})=I\Longleftrightarrow\sM=\{0\}.$

\end{corollary}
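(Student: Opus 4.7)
The plan is to reduce the iteration to a spectral computation on the Cayley transform. The key preliminary observation is that $B_n=A_n^{-1}$ for every $n\in\dN_0$. The base case $B_0=A^{-1}=A_0^{-1}$ is given, and the induction step follows from
\[
B_{n+1}=h(A_n,A_n^{-1})=\Bigl(\tfrac12(A_n^{-1}\dot+A_n)\Bigr)^{-1}=A_{n+1}^{-1}
\]
by the definitions of the harmonic mean and of $A_{n+1}$ in \eqref{fhbauf1}. Consequently $\mathfrak{C}(B_n)=-\mathfrak{C}(A_n)$ throughout, so it suffices to control $T_n:=\mathfrak{C}(A_n)$, starting from $T:=T_0=\mathfrak{C}(A)$.

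By Theorem \ref{vspom2}, $T_{n+1}=h(I+T_n,I-T_n)-I$. Since $I\pm T_n$ are commuting elements of $\bB^+(\cH)$ and $(I+T_n)+(I-T_n)=2I$ is invertible, the regularized formula \eqref{hinft} yields
\[
(I+T_n):(I-T_n)={\rm s}-\lim_{\varepsilon\downarrow 0}(I+T_n)(2I+\varepsilon I)^{-1}(I-T_n)=\tfrac12(I-T_n^2),
\]
so $h(I+T_n,I-T_n)=I-T_n^2$ and $T_{n+1}=-T_n^2$. By induction $T_n=-T^{2^n}$ for $n\ge 1$. The functional calculus for the bounded selfadjoint contraction $T$ then gives the strong convergence $T^{2^n}\to P_{\ker(I-T^2)}=P_{\ker(I-T)\oplus\ker(I+T)}$: at each spectral value $\lambda\in[-1,1]$ of $T$, $\lambda^{2^n}$ converges to $1$ if $|\lambda|=1$ and to $0$ otherwise.

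The geometric identification I need is $\ker(I-T)=\ker A$ and $\ker(I+T)=\mul A$, both of which follow from $T=-I+2(I+A)^{-1}$: $Tf=f$ iff $\{f,0\}\in A$, while $Tf=-f$ iff $(I+A)^{-1}f=0$ iff $f\in\mul(I+A)=\mul A$. Since $\ker A\perp\mul A$, this yields $\ker(I-T^2)=\mul A\oplus\ker A=\sM$, so $T_n\to -P_\sM$ strongly and $\mathfrak{C}(B_n)\to P_\sM$ strongly. Strong convergence of Cayley transforms is equivalent to strong resolvent convergence of the underlying relations, so $A_\infty=\mathfrak{C}(-P_\sM)$ and $B_\infty=\mathfrak{C}(P_\sM)$. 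Expanding $\mathfrak{C}(\pm P_\sM)=\{\{(I\pm P_\sM)f,(I\mp P_\sM)f\}:f\in\cH\}$ and splitting $f$ along $\sM^\perp\oplus\sM$ identifies these relations as $(P_{\sM^\perp})^{-1}$ and $P_{\sM^\perp}$, respectively, which gives the claimed formula. The final equivalence is immediate: $ah(A,A^{-1})=I$ iff $P_{\sM^\perp}=I_\cH$ iff $\sM=\{0\}$. The main obstacle is conceptual rather than technical: once one spots that the two iterations collapse via $B_n=A_n^{-1}$, the Cayley-transform dynamics trivialises to the scalar map $t\mapsto -t^2$ and everything else is spectral calculus.
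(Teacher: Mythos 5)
Your proof is correct and follows essentially the same route as the paper: reduce via $B_n=A_n^{-1}$ to the Cayley-transform recursion $\mathfrak C(A_n)=-T^{2^n}$ (the paper computes $h(I+T,I-T)=I-T^2$ by the same parallel-sum formula $2(I+T)(2I)^{-1}(I-T)$ that you obtain after regularization), and then pass to the spectral limit $T^{2^n}\to P_{\ker(I-T^2)}=P_\sM$ and identify $\mathfrak C(\pm P_\sM)$ with $(P_{\sM^\perp})^{\mp 1}$. The only cosmetic quibble is your attribution of the regularized product formula to \eqref{hinft}; the relevant identity for bounded nonnegative operators is the one stated in the introduction, but this does not affect the argument.
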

\begin{proof}
Set $T:=\mathfrak C(A)$, then $\mathfrak C(A^{-1})=-T,$ 
\[
A_1:=\half(A\dot+A^{-1}),\; B_1:=h(A,A^{-1})=\left(\half(A^{-1}\dot+A)\right)^{-1}=A^{-1}_1.
\]
By induction
\[
A_n:=\half(A_{n-1}\dot+B_{n-1})=\half(A_{n-1}\dot+A^{-1}_{n-1}),\; B_n:=h(A_{n-1},B_{n-1})=A^{-1}_n,\; n\ge 2.
\]
Using \eqref{EQ11} and \eqref{EQ22} we get
\[
\begin{array}{l}
\mathfrak C(A_1)=h(I+T, I-T)-I=2(I+T)(I-T)(I+T+I-T)^{-1}-I=-T^2,\\
 \mathfrak C(B_1)=\mathfrak C(A^{-1}_1)=-\mathfrak C(A_1)=T^2,\;
\mathfrak C(A_n)=-T^{2^n},\; \mathfrak C(B_n)=T^{2^n},\; n\ge 2.
\end{array}
\]
Since $T$ is a selfadjoint contraction and $\sM=\ker (I-T^2)$, where $\sM$ is defined by \eqref{bgh1},
we get the equality
\[
{\rm s}-\lim\limits_{n\to \infty}T^{2^n}=P_\sM.
\]
It follows that
\[
B_\infty=\mathfrak C(P_\sM)=(I-P_\sM)(I+P_\sM)^{-1}=P_{\sM^\perp},\;A_\infty=\mathfrak C(-P_\sM)=\left(P_{\sM^\perp}\right)^{-1}.
\]
The proof is complete.
\end{proof}
\section{Arithmetic, harmonic, arithmetic--harmonic means and nonnegative selfadjoint extensions of nonnegative symmetric linear relations}\label{EXTEN}
Basic results  of the Kre\u{\i}n theory \cite{Kr} can be extended to the case of a non-densely defined  symmetric operator or a symmetric l.r.
\cite{AN, Ar4, CS,HMS,HSSW06, HSSW07}. Let $S$ be a nonnegative symmetric l.r. in $\cH$.
Then the set of all its nonnegative selfadjoint extensions consists of maximal and minimal elements. The maximal nonnegative selfadjoint extension is the Friedrichs extension
$S_{\rm F}$  \cite{RoBe}, which is associated with the closure of the sesquilinear form
$${\mathfrak s}[f,g]=(f', g),\; \{f,f'\}, \{g,g'\}\in S .$$
 The minimal nonnegative selfadjoint extension $S_{\rm K}$ is called the Kre\u{\i}n or the Kre\u{\i}n-von Neumann extension and can be defined as follows \cite{CS}:
\[
S_{\rm K}=((S^{-1})_{\rm F})^{-1}.
\]
The closed form $S_{\rm F}[\cdot,\cdot]$ is a closed restriction of the closed form $\wt S[\cdot,\cdot]$ associated with any nonnegative selfadjoint extension $\wt S$ of $S$ \cite{Kr,Ar4}.
A nonnegative selfadjoint extension $\wt S$ of $S$ is called \textit{extremal} \cite{Ar4} if
\[
\inf\left\{(f'-g', f-g):\{g,g'\}\in S\right\}=0 \quad\mbox{for all} \quad \{f,f'\}\in\wt S.
\]
It is proved in \cite[Proposition 3]{Ar4} that if $\wt S$ is an extremal extension of $S$, then the closed form $\wt S[\cdot,\cdot]$ is a closed restriction of the closed form $S_{\rm K}[\cdot,\cdot]$ associated with the Kre\u{\i}n extension of $S$.

Let $Q={{\mathfrak C}}(S)$ be the Cayley transform of $S$. Then $Q$ is a non-densely defined Hermitian contraction with $\dom Q=\ran (S+I)$. There is a one-to-one correspondence \cite{Kr,CS}
\[
\wt Q={{\mathfrak C}}(\wt S),\;\wt S={{\mathfrak C}}(\wt Q)
\]
between the set of all nonnegative selfadjoint extensions and the set of all selfadjoint contractive extensions of $Q$.
Let
\[
Q_\mu={{\mathfrak C}}(S_{\rm F}),\; Q_M={{\mathfrak C}}(S_{\rm K}).
\]
The operators $Q_\mu$ and $Q_M$ posses the following properties \cite{Kr}:

if $\sN:=\cH\ominus\dom Q,$ then
\begin{equation}\label{rhfqybt}
(I+Q_\mu)_\sN=(I-Q_M)_\sN=0.
\end{equation}
$\sN$ is the deficiency subspace of $S$ corresponding to $\lambda=-1$. Set
$\sN_0:=\cran(Q_M-Q_\mu).$
The set of all selfadjoint contractive extensions of $Q$ forms the operator interval $[Q_\mu, Q_M]$ \cite{Kr}, which admits the parameterizations
\begin{multline}\label{krparm}
\wt Q=\half(Q_\mu+Q_M)+\half(Q_M-Q_\mu)^\half \wt Z(Q_M-Q_\mu)^{\half}\\=Q_\mu+\half(Q_M-Q_\mu)^\half(I_{\sN_0}+ \wt Z)(Q_M-Q_\mu)^{\half}
=Q_M-\half(Q_M-Q_\mu)^\half(I_{\sN_0}- \wt Z)(Q_M-Q_\mu)^{\half}.
\end{multline}
A nonnegative selfadjoint extension $\wt S$ is extremal if and only if the parameter $\wt Z$ for $\wt Q={{\mathfrak C}}(\wt S)$ in the right hand side in \eqref{krparm} is a selfadjoint and unitary operator (a fundamental symmetry) in the subspace $\sN_0$ \cite{ArlBelTsek2011}. This is equivalent to the equality $(I-\wt Q^2)_\sN=0$, see \cite{ArlBelTsek2011}.
If $\dim \sN_0=1$, then extremal extensions of $S$ are only $S_{\rm F}$ and $S_{\rm K}$.

From \eqref{krparm} and \eqref{rhfqybt} it follows the equalities
\begin{equation}\label{shoort}
(I\pm\wt Q)_\sN=\half(Q_M-Q_\mu)^\half(I_{\sN_0}\pm\wt Z)(Q_M-Q_\mu)^\half.
\end{equation}
Note that from \eqref{rhfqybt} and \eqref{gthda1}, \eqref{gthda2} one gets the equalities for shorted operators
\[
(S_{\rm K})_\sN=0,\; ((S_{\rm F})^{-1})_\sN=0.
\]
Moreover, the Kre\u{\i}n uniqueness criteria \cite{Kr} is equivalent to the equality $(S_{\rm F})_\sN=0$.

The next theorem shows connections of the arithmetic and harmonic means with the theory of nonnegative selfadjoint extensions of nonnegative symmetric l.r..
\begin{theorem}\label{ext}
Let $S$ be a nonnegative symmetric l.r.. Then
\begin{enumerate}
\item
for an arbitrary nonnegative selfadjoint extensions $\wt S_1$ and $\wt S_2$ of $S$ their arithmetic mean $\half(\wt S_1\dot+S_2)$, harmonic mean $h(\wt S_1,\wt S_2)$, and nonnegative selfadjoint l.r. $c_0(\wt S_1,\wt S_2)$, given by \eqref{chtlytt},
are nonnegative selfadjoint extensions of $S$;
\item if $\wt Z_1$ and $\wt Z_2$ are parameters of $\mathfrak C(\wt S_1)$ and $\mathfrak C(\wt S_2)$ in formulae \eqref{krparm}, respectively, then the corresponding parameters
for $\mathfrak C\left(\half(\wt S_1\dot+S_2)\right)$, $\mathfrak C\left(h(\wt S_1,S_2)\right)$ and $\mathfrak C(c_0(\wt S_1,\wt S_2))$ are the operators
\[
h(I_{\sN_0}+\wt Z_1,I_{\sN_0}+\wt Z_2)-I_{\sN_0},\;
I_{\sN_0}-h(I_{\sN_0}-\wt Z_1,I_{\sN_0}-\wt Z_2),\quad\mbox{and}\quad \half(\wt Z_1+\wt Z_2),
\]
respectively;
\item
for an arbitrary nonnegative selfadjoint extension $\wt S$ of $S$ the equalities
\begin{equation}\label{osobrol}
\half(S_{\rm F}\dot+\wt S)=S_{\rm F},\;h(S_{\rm K},\wt S)=S_{\rm K}
\end{equation}
hold;
\item
if $\wt S_1$ and $\wt S_2$ are extremal extensions of $S$, then $\half(\wt S_1\dot+S_2)$ and $h(\wt S_1,\wt S_2)$ are extremal extensions of $S$ as well, moreover, for an arbitrary extremal extension $\wt S$ one has
\begin{equation}\label{osobrol2}
\half(S_{\rm K}\dot +\wt S)=\wt S,\;h(S_{\rm F},\wt S)=\wt S.
\end{equation}

\end{enumerate}
\end{theorem}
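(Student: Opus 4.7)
My plan is to transport everything to the Cayley transform side, where the selfadjoint contractive extensions of $Q={\mathfrak C}(S)$ form the operator interval $[Q_\mu,Q_M]$, parametrized affinely by the Krein parameter $\wt Z$ on $\sN_0$ via \eqref{krparm}, and then invoke Theorem~\ref{vspom2} to rewrite the Cayley transforms of $\half(\wt S_1\dot+\wt S_2)$, $h(\wt S_1,\wt S_2)$ and $c_0(\wt S_1,\wt S_2)$ in terms of $h(I\pm\wt Q_1,I\pm\wt Q_2)$ and $\half(\wt Q_1+\wt Q_2)$, respectively.

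For (1) and (2), with $\wt Q_i={\mathfrak C}(\wt S_i)$ carrying Krein parameters $\wt Z_i$, I first show that $h(I+\wt Q_1,I+\wt Q_2)-I$ lies in $[Q_\mu,Q_M]$: the upper bound is $h(A,B)\le A$ from Theorem~\ref{Frmsm}(4), giving $h(I+\wt Q_1,I+\wt Q_2)\le I+Q_M$, and the lower bound follows from monotonicity of $h$ (Theorem~\ref{Frmsm}(5)) together with $h(I+Q_\mu,I+Q_\mu)=I+Q_\mu$ (a special case of Theorem~\ref{Frmsm}(8)). Membership in $[Q_\mu,Q_M]$ automatically makes the corresponding l.r.\ a nonnegative selfadjoint extension of $S$. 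To identify its Krein parameter I short onto $\sN$: Corollary~\ref{afqyj} gives $(h(I+\wt Q_1,I+\wt Q_2))_\sN=h((I+\wt Q_1)_\sN,(I+\wt Q_2)_\sN)$, while \eqref{shoort} gives $(I+\wt Q_i)_\sN=\half G(I_{\sN_0}+\wt Z_i)G$ with $G:=(Q_M-Q_\mu)^\half$. Factoring $G$ out of the harmonic mean (see below) and comparing with \eqref{shoort} applied to the unknown extension identifies the parameter as $h(I_{\sN_0}+\wt Z_1,I_{\sN_0}+\wt Z_2)-I_{\sN_0}$. The harmonic-mean case is entirely symmetric via the companion identity in Theorem~\ref{vspom2}, and the $c_0$ case is immediate from affinity of \eqref{krparm} in $\wt Z$, since ${\mathfrak C}(c_0(\wt S_1,\wt S_2))=\half(\wt Q_1+\wt Q_2)$ by definition \eqref{chtlytt}.

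Parts (3) and (4) follow by specializing parameters. From \eqref{rhfqybt}, $S_F$ and $S_K$ correspond to $\wt Z_{S_F}=-I_{\sN_0}$ and $\wt Z_{S_K}=I_{\sN_0}$. Substituting $\wt Z_1=-I_{\sN_0}$ in the arithmetic-mean parameter produces $h(0,I_{\sN_0}+\wt Z_2)-I_{\sN_0}=-I_{\sN_0}$ (using $h(0,\cdot)=0$ from Theorem~\ref{Frmsm}(7)), so $\half(S_F\dot+\wt S)=S_F$; dually $h(S_K,\wt S)=S_K$, yielding \eqref{osobrol}. For (4), extremal extensions correspond to $\wt Z_i$ being fundamental symmetries on $\sN_0$, so $I\pm\wt Z_i=2P_i^\pm$ with $P_i^\pm$ orthogonal projections; the arithmetic-mean parameter $2h(P_1^+,P_2^+)-I_{\sN_0}$ is a symmetry provided $h(P_1^+,P_2^+)$ is itself an orthogonal projection, which is a classical fact from the parallel-addition literature (provable via the two-projection decomposition, which reduces to the commuting case $h(P,Q)=P\wedge Q$). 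Finally, $h(2I_{\sN_0},2P)=2h(I_{\sN_0},P)=2P$ (since $h(I,P)=P$ for any orthogonal projection $P$), so substituting $\wt Z_1=I_{\sN_0}$ in the arithmetic-mean formula produces the parameter $2P-I_{\sN_0}=\wt Z$, i.e.\ $\half(S_K\dot+\wt S)=\wt S$; the identity $h(S_F,\wt S)=\wt S$ follows symmetrically, proving \eqref{osobrol2}.

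The chief technical obstacle is the factorization $h(GXG,GYG)=Gh(X,Y)G$ with $G=(Q_M-Q_\mu)^\half$: one cannot simply cancel $G$ since it need not be boundedly invertible on $\sN_0$. I would prove it by a variational calculation, expanding $(h(GXG,GYG)u,u)=2\inf\{(XG\f,G\f)+(YG\psi,G\psi):\f+\psi=u\}$, changing variables $\f'=G\f$, $\psi'=G\psi$, and using $\ker G=\sN_0^\perp$ together with density of $\ran G$ in $\sN_0$ and continuity of $X,Y$ to pass to $2\inf\{(X\f',\f')+(Y\psi',\psi'):\f'+\psi'=Gu\}=(Gh(X,Y)Gu,u)$. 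A secondary point worth recording is the classical fact that the harmonic mean of two orthogonal projections is itself a projection, which is used in the extremality argument of part (4).
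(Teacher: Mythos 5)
Your proposal is correct, but it runs almost entirely on the Cayley-transform side, whereas the paper distributes the work differently. For part (1) and \eqref{osobrol} the paper argues with forms: since $S_{\rm F}[\cdot,\cdot]$ (resp.\ $S_{\rm K}^{-1}[\cdot,\cdot]$) is a closed restriction of each $\wt S_i[\cdot,\cdot]$ (resp.\ $\wt S_i^{-1}[\cdot,\cdot]$), the form sum restricted to $\cD[S_{\rm F}]$ coincides with $S_{\rm F}[\cdot,\cdot]$, which yields simultaneously that the arithmetic and harmonic means are extensions and the equalities \eqref{osobrol}; you instead check that $h(I+\wt Q_1,I+\wt Q_2)-I$ and $I-h(I-\wt Q_1,I-\wt Q_2)$ lie in the operator interval $[Q_\mu,Q_M]$ and obtain \eqref{osobrol}, \eqref{osobrol2} by specializing the parameters found in part (2). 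Likewise, for extremality in (4) the paper again works with forms (closed restrictions of $S_{\rm K}[\cdot,\cdot]$ are preserved by the form sum, inverses handle the harmonic mean), while you use the fundamental-symmetry characterization of extremal parameters together with $2(P_{\cL_1}:P_{\cL_2})=P_{\cL_1\cap\cL_2}$, i.e.\ \eqref{proinht} — exactly the computation the paper defers to Proposition \ref{extrah}, so your route is legitimate and consistent with the paper's toolkit. For part (2) your argument coincides with the paper's (shorting onto $\sN$ via Corollary \ref{afqyj}/\eqref{totjlyj}, formula \eqref{shoort}, uniqueness of the representation \eqref{krparm}); the genuine difference is the factorization $h(GXG,GYG)=Gh(X,Y)G$ with $G=(Q_M-Q_\mu)^{\half}$, which the paper imports from \cite[Proposition 1]{Ar2} and which you prove directly by the variational formula for parallel sums plus density of $\ran G$ in $\sN_0$ and continuity — that argument is sound and makes the proof more self-contained. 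What your approach buys is uniformity (everything becomes a statement about bounded operators and the Kre\u{\i}n parametrization); what the paper's buys is that part (1), (3), (4) do not depend on Theorem \ref{vspom2} or the Cayley machinery at all.

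One local slip: the inequality $h(A,B)\le A$ is not Theorem \ref{Frmsm}(4) (which gives $A:B\le A$, hence only $h(A,B)\le 2A$) and is false in general, e.g.\ $h(1,3)=3/2>1$. The bound you actually need, $h(I+\wt Q_1,I+\wt Q_2)\le I+Q_M$, follows from the monotonicity you already invoke for the lower bound (or from $h(A,B)\le\half(A\dot+B)$, Theorem \ref{ythdj}), so the argument is unaffected once the justification is corrected.
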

\begin{proof}
If $\wt S$ is a nonnegative selfadjoint extension of $S$, then (see \cite{Ar4, CS, HSSW06, HSSW07})
\begin{itemize}
\item $S_{\rm K}\le \wt S\le S_{\rm F}$, the form $S_{\rm F}[\cdot,\cdot]$ is the closed restriction of the form $\wt S[\cdot,\cdot],$ and $\cD[\wt S]\cap \cD[S _{\rm F}]=\cD[S_{\rm F}]$.
 \item
 $\wt S^{-1}$ is a nonnegative selfadjoint extension of $S^{-1}$, $S_{\rm F}^{-1}\le \wt S^{-1}\le S_{\rm K}^{-1}$, the form $S_{\rm K}^{-1}[\cdot,\cdot]$ is the closed restriction of the form $\wt S^{-1}[\cdot,\cdot]$, and $\cD[\wt S^{-1}]\cap \cD[S_{\rm K}^{-1}]=\cD[S_{\rm K}^{-1}]$.
\end{itemize}

Let $\wt S_1$ and $\wt S_2$ be two nonnegative selfadjoint extensions of $S$.
Then
\[
\cD[\wt S_1]\cap\cD[\wt S_2]\supseteq\cD[S_{\rm F}],\;\cD[\wt S^{-1}_1]\cap \cD[\wt S^{-1}_2]\supseteq \cD[S_{\rm K}^{-1}],
\]
and
\[
\begin{array}{l}
\half(\wt S_1\dot+\wt S_2)[f]=\half\left(\wt S_1[f]+\wt S_2[f]\right)\ge S_{\rm K}[f],\; f\in \cD[\wt S_1]\cap \cD[\wt S_2],\\
\half(\wt S_1\dot+\wt S_2)[\f]= S_{\rm F}[\f],\; \f\in\cD[S_{\rm F}],\\[3mm]
\half(\wt S^{-1}_1\dot+\wt S^{-1}_2)[g]=\half\left(\wt S^{-1}_1[g]+\wt S^{-1}_2[g]\right)\ge S^{-1}_{\rm F}[g],\; g\in \cD[\wt S^{-1}_1]\cap \cD[\wt S^{-1}_2],\\
\half(\wt S^{-1}_1\dot+\wt S^{-1}_2)[\psi]=S^{-1}_{\rm K}[\psi],\;\psi\in\cD[S^{-1}_{\rm K}].
\end{array}
\]
Hence $\half(\wt S_1\dot+\wt S_2)$ is a nonnegative selfadjoint extension of $S$ and $\half(\wt S^{-1}_1\dot+\wt S^{-1}_2)$ is a nonnegative selfadjoint extension of $S^{-1}.$  It follows that
\[
h(\wt S_1,\wt S_2)=2(\wt S^{-1}_1\dot+\wt S^{-1}_2)^{-1}
\]
is a nonnegative selfadjoint extension of $S$. Besides, we get equalities in \eqref{osobrol}.

Set $Q=\mathfrak C(S),$ $Q_\mu=\mathfrak C(S_{\rm F}),$ $Q_M=\mathfrak C(S_{\rm K}),$ $\sN=\cH\ominus\dom Q,$ $\sN_0=\cran(Q_M-Q_\mu)$.

Let $\wt Q_1:=\mathfrak C(\wt S_1)$ and $\wt Q_2:=\mathfrak C(\wt S_2).$
The operators $\wt Q_k$, $k=1,2$ admit the representations by means of expression \eqref{krparm}:
\[
\wt Q_k=\half(Q_\mu+Q_M)+\half(Q_M-Q_\mu)^\half \wt Z_k(Q_M-Q_\mu)^{\half},
\]
where $\wt Z_k$, $k=1,2$ are selfadjoint contractions in $\sN_0$.
Since the operator $\half(\wt Z_1+\wt Z_2)$ is a selfadjoint contraction and
\[
\half (\wt Q_1+\wt Q_2)=\half(Q_\mu+Q_M)+\half(Q_M-Q_\mu)^\half\left(\half (\wt Z_1+\wt Z_2)\right)(Q_M-Q_\mu)^{\half},
\]
the operator $\half (\wt Q_1+\wt Q_2)$ is a selfadjoint contractive extension of $Q$. Hence $c_0(\wt S_1,\wt S_2)=\mathfrak C(\half (\wt Q_1+\wt Q_2))$ is a nonnegative selfadjoint extension of $S$.

From \eqref{EQ11} and \eqref{EQ22} we get the equalities
\[
I+\mathfrak C\left(\half(\wt S_1\dot+\wt S_2)\right)=h(I+\wt Q_1, I+\wt Q_2),\;
I-\mathfrak C\left(h(\wt S_1,\wt S_2)\right)=h(I-\wt Q_1, I-\wt Q_2).
\]
Further, using \eqref{totjlyj}, one obtains
\[
\left(I+\mathfrak C\left(\half(\wt S_1\dot+\wt S_2)\right)\right)_\sN=\left(h(I+\wt Q_1, I+\wt Q_2)\right)_\sN=h\left((I+\wt Q_1)_\sN, (I+\wt Q_2)_\sN\right).
\]
From \eqref{shoort} and \cite[Proposition 1]{Ar2} it follows the equality
\begin{multline*}
h\left((I+\wt Q_1)_\sN, (I+\wt Q_2)_\sN\right)\\
=2\left(\half(Q_M-Q_\mu)^\half(I_{\sN_0}+\wt Z_1)(Q_M-Q_\mu)^\half\right):\left(\half(Q_M-Q_\mu)^\half(I_{\sN_0}+\wt Z_2)(Q_M-Q_\mu)^\half\right)\\
=\half(Q_M-Q_\mu)^\half h(I_{\sN_0}+\wt Z_1,I_{\sN_0}+\wt Z_2)(Q_M-Q_\mu)^\half.
\end{multline*}
Now from \eqref{krparm}, \eqref{shoort} and the uniqueness of the representation we get that
\[
\mathfrak C\left(\half(\wt S_1\dot+\wt S_2)\right)=\half(Q_\mu+Q_M)+\half(Q_M-Q_\mu)^\half\wt Z(Q_M-Q_\mu)^\half,
\]
where $\wt Z=h(I_{\sN_0}+\wt Z_1,I_{\sN_0}+\wt Z_2)-I_{\sN_0}.$
Similarly
\[
\mathfrak C\left(h(\wt S_1,\wt S_2)\right)=\half(Q_\mu+Q_M)+\half(Q_M-Q_\mu)^\half\wt W(Q_M-Q_\mu)^\half,
\]
where $\wt W=I_{\sN_0}-h(I_{\sN_0}-\wt Z_1,I_{\sN_0}-\wt Z_2).$

Let $\wt S$ be an extremal nonnegative selfadjoint extension of $S$ and let $\wt Q={{\mathfrak C}}(\wt S)$ be its Cayley transform. Then $\wt Q$ admits the representation
\eqref{krparm} with a fundamental symmetry $\wt Z$ in $\sN_0$.
The operator $\wt S^{-1}$ is a nonnegative selfadjoint extension of the operator $S^{-1}$ and
\[
{{\mathfrak C}}(S^{-1})=-Q,\; {{\mathfrak C}}((S^{-1})_{\rm F})=-Q_M,\;{{\mathfrak C}}((S^{-1})_{\rm K})=-Q_\mu,\; {\mathfrak C}(\wt S^{-1})=-\wt Q.
\]
Hence
\[
-\wt Q=-\half(Q_\mu+Q_M)+\half(Q_M-Q_\mu)^\half (-\wt Z)(Q_M-Q_\mu)^{\half}.
\]
Because $\wt Z$ is a selfadjoint and unitary (in $\sN_0$), the operator $-\wt Z$ is  selfadjoint and unitary as well.
Therefore, the operator $\wt S^{-1}$ is an extremal extension of $S^{-1}$.

Let $\wt S_1$ and $\wt S_2$ be two extremal nonnegative selfadjoint extensions of $S$. Then the closed form $\wt S_1[\cdot,\cdot]$ and $\wt S_2[\cdot,\cdot]$ are closed restrictions of the closed form $S_{\rm K}[\cdot,\cdot].$ Therefore the sesquilinear form
$\half(\wt S_1\dot+\wt S_2)[\cdot,\cdot]$ is a closed restriction of the closed form $S_{\rm K}[\cdot,\cdot]$. It follows that $\half(\wt S_1\dot+\wt S_2)$ is an extremal extension of $S$.

Since
$\wt S_1^{-1}$ and $\wt S_2^{-1}$ are extremal nonnegative selfadjoint extensions of $S^{-1}$,  $\half\left(\wt S^{-1}_1\dot+\wt S^{-1}_2\right)$ is an extremal extension of $S^{-1}$
 and this implies that
\[
h(\wt S_1,\wt S_2)=\left(\half\left(\wt S_1^{-1}\dot+\wt S^{-1}_2\right)\right)^{-1}
\]
is an extremal extension of $S$.

For an arbitrary extremal extension $\wt S$ of $S$ we have equalities
\[
\half(S_{\rm K}\dot+\wt S)[f]=\wt S[f]\;\; \forall f\in \cD[(S_{\rm K}\dot+\wt S)/2]=\cD[\wt S],
\]
and
\[
\half(S_{\rm F}^{-1}\dot+\wt S^{-1})[h]=\wt S^{-1}[h]\;\; \forall h\in \cD[(S^{-1}_{\rm F}\dot+\wt S^{-1})/2]=\cD[\wt S^{-1}].
\]
It follows that equalities in \eqref{osobrol2} are valid.

The proof is complete.
\end{proof}

\begin{proposition}\label{extrah}
Let $S$ be a nonnegative symmetric l.r. in $\cH$, having a non-unique nonnegative selfadjoint extension.
Then
 the arithmetic-harmonic means $ah(\wt S_1, \wt S_2 )$ of an arbitrary two extremal nonnegative selfadjoint extensions $\wt S_1$, $\wt S_2$ are extremal nonnegative selfadjoint extensions too. Moreover,
 $$ah(\wt S_1, \wt S_2 )=\left<\wt S_1^{(1)},\wt S_2^{(1)}\right>,$$
  where $\wt S_1^{(1)}=\half(\wt S_1\dot+ \wt S_2 )$,
 $\wt S_2^{(1)}=h(\wt S_1,\wt S_2 )$. In particular, $ah (S_{\rm F}, S_{\rm K})=\left<S_{\rm F},S_{\rm K}\right>$.
\end{proposition}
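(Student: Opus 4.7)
The plan is to transfer the problem to the Cayley transform picture from Theorem \ref{ext}, in which extremal nonnegative selfadjoint extensions of $S$ correspond bijectively to fundamental symmetries (selfadjoint unitaries) on $\sN_0$. Let $\wt Z_1,\wt Z_2$ be the parameters of $\wt S_1,\wt S_2$ in \eqref{krparm}, and write
$$
I_{\sN_0}+\wt Z_k=2E_k^{+},\qquad I_{\sN_0}-\wt Z_k=2E_k^{-},
$$
where $E_k^{\pm}$ is the orthogonal projection onto the $\pm 1$-eigenspace of $\wt Z_k$. The computational engine is the elementary identity
$$
h(2P,2Q)=2P_{\ran P\cap\ran Q}
$$
for orthogonal projections $P,Q$, which I would prove via a direct form computation: $P^{-1}[y]=\|y\|^2$ on $\ran P$, so $(P^{-1}\dot+Q^{-1})[y]=2\|y\|^2$ on $\ran P\cap\ran Q$, and inversion gives $P:Q=\half P_{\ran P\cap\ran Q}$.

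Combining this identity with Theorem \ref{ext}(2) yields explicit formulas for the parameters of $\wt S_1^{(1)}=\half(\wt S_1\dot+\wt S_2)$ and $\wt S_2^{(1)}=h(\wt S_1,\wt S_2)$, namely
$$
\wt Z_1^{(1)}=2P_{M_+}-I_{\sN_0},\qquad \wt Z_2^{(1)}=I_{\sN_0}-2P_{M_-},\qquad M_{\pm}:=\ran E_1^{\pm}\cap\ran E_2^{\pm}.
$$
Each of these is of the form $\pm(2P-I_{\sN_0})$ for an orthogonal projection $P$, hence is again a fundamental symmetry, so $\wt S_j^{(1)}$ is extremal (recovering Theorem \ref{ext}(4) as a byproduct).

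The geometric observation that makes the iteration collapse is $M_+\perp M_-$: indeed $M_+\subseteq\ran E_1^+$ and $M_-\subseteq\ran E_1^-$, and these are orthogonal as $\pm 1$-eigenspaces of the selfadjoint operator $\wt Z_1$. Iterating once more, the $+1$-eigenspace of $\wt Z_1^{(1)}$ is $M_+$, the $+1$-eigenspace of $\wt Z_2^{(1)}$ is $M_-^{\perp}$, and orthogonality gives $M_+\cap M_-^{\perp}=M_+$. Applying the same formulas shows that the parameter of $\half(\wt S_1^{(1)}\dot+\wt S_2^{(1)})$ equals $2P_{M_+}-I_{\sN_0}=\wt Z_1^{(1)}$, i.e.\ $\wt S_1^{(2)}=\wt S_1^{(1)}$, and symmetrically $\wt S_2^{(2)}=\wt S_2^{(1)}$. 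By induction the sequences $\{A_n\}$, $\{B_n\}$ from \eqref{fhbauf1} are stationary for $n\ge 1$, so their strong resolvent limits in \eqref{harar1} are $A_\infty=\wt S_1^{(1)}$ and $B_\infty=\wt S_2^{(1)}$, yielding $ah(\wt S_1,\wt S_2)=\left<\wt S_1^{(1)},\wt S_2^{(1)}\right>$.

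For the concluding equality $ah(S_{\rm F},S_{\rm K})=\left<S_{\rm F},S_{\rm K}\right>$ I would either specialize the above to $\wt Z_1=-I_{\sN_0}$, $\wt Z_2=I_{\sN_0}$ (so $M_\pm=\{0\}$), or, more directly, invoke \eqref{osobrol}: $\half(S_{\rm F}\dot+S_{\rm K})=S_{\rm F}$ and $h(S_{\rm F},S_{\rm K})=S_{\rm K}$, so the iteration is already stationary at step zero. I expect the main obstacle to be merely bookkeeping: cleanly matching the asymmetric Cayley-transform identities \eqref{EQ11}, \eqref{EQ22} with the $\pm 1$-eigenspace decompositions of the parameters so that the two iteration formulas become the projection-intersection formulas above.
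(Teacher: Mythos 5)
Your proposal is correct and follows essentially the same route as the paper: pass to the parameters $\wt Z_1,\wt Z_2$ via Theorem \ref{ext}(2), use the projection identity $2(P_{\cL_1}:P_{\cL_2})=P_{\cL_1\cap\cL_2}$ (which the paper cites from Fillmore--Williams and you reprove by a short form computation) to see that the first iterates correspond to fundamental symmetries, and then show the iteration is stationary from step one. The only cosmetic difference is the collapse argument: you use orthogonality of the $\pm1$-eigenspaces of $\wt Z_1$ to get $M_+\subseteq M_-^\perp$, while the paper obtains the same range nesting from the ordering $\wt Z_1^{(1)}\le \wt Z_2^{(1)}$; these are equivalent observations.
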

\begin{proof}
Set $A=S_{\rm F}$, $B=S_{\rm K}$.
 From Theorem \ref{ext} it follows that
\[
h(A,B)=h(S_{\rm F}, S_{\rm K})=S_{\rm K}=B,\; \half(A\dot+B)=\half(S_{\rm F}\dot+ S_{\rm K})=S_{\rm F}=A.
\]
 Then from \eqref{fhbauf1} and \eqref{harar1} we get $A_\infty=S_{\rm F}$ and $B_\infty=S_{\rm K}.$

Recall \cite[Theorem 4.3]{FW} that if $\cL_1$, $\cL_2$ are two subspaces in the Hilbert space $\cH$, then
\begin{equation}\label{proinht}
2(P_{\cL_1}:P_{\cL_2})=P_{\cL_1\cap\cL_2}.
\end{equation}
Let $\wt S_k$, $k=1,2$ be two extremal extensions of $S$. Then the corresponding operators $\wt Z_k$, $k=1,2$ in \eqref{krparm} for $\mathfrak C(\wt S_k)$ are fundamental symmetries in $\sN_0$. Therefore,  the operators $\half (I_{\sN_0}\pm \wt Z_k)$ are orthogonal projections in $\sN_0$.  Hence, the operators
\[
2\left(\half (I_{\sN_0}\pm \wt Z_1)\right):\left(\half(I_{\sN_0}\pm \wt Z_2)\right)=\half h(I_{\sN_0}\pm \wt Z_1,I_{\sN_0}\pm \wt Z_2)
\]
are orthogonal projections in $\sN_0$. It follows that the operators
\[
\wt Z_1^{(1)}=h(I_{\sN_0}+\wt Z_1,I_{\sN_0}+\wt Z_2)-I_{\sN_0},\; \wt Z_2^{(1)}=I_{\sN_0}-h(I_{\sN_0}-\wt Z_1,I_{\sN_0}-\wt Z_2)
\]
are fundamental symmetries in $\sN_0$.
Define two sequences
\[
\wt S_1^{(0)}:=\wt S_1,\;\wt S_2^{(0)}:=\wt S_2,\; \wt S_1^{(n)}=\half(\wt S_1^{(n-1)}\dot+\wt S_2^{(n-1)}),\;
\wt S_2^{(n)}=h(\wt S_1^{(n-1)}, \wt S_2^{(n-1)}),\; n\in\dN.
\]
Let us show that
$$\wt S_1^{(n)}=\wt S_1^{(1)},\;\wt S_2^{(n)}=\wt S_2^{(1)}\;n\ge 2.$$
Define also two sequences of operators in $\sN_0$:
\[
\begin{array}{l}
\wt Z_1^{(n)}=h(I_{\sN_0}+\wt Z_1^{(n-1)},I_{\sN_0}+\wt Z_2^{(n-1)})-I_{\sN_0},\\
 \wt Z_2^{(n)}=I_{\sN_0}-h(I_{\sN_0}-\wt Z_1^{(n-1)},I_{\sN_0}-\wt Z_2^{(n-1)}),\; n\ge 2.
 \end{array}
\]
By Theorem \ref{ext} the operators $\wt Z_1^{(n)}$ and $\wt Z_2^{(n)}$ correspond  for all $n\in\dN$ to $\wt Q_1^{(n)}=\mathfrak C(\wt S_1^{(n)})$, $\wt Q_2^{(n)}=\mathfrak C(\wt S_2^{(n)})$ in their representation in \eqref{krparm}, respectively.

Because $\wt S_2^{(1)}\le \wt S_1^{(1)}$, we get $\wt Z_2^{(1)}\ge \wt Z_1^{(1)}$. Hence
$$I_{\sN_0}+\wt Z_2^{(1)}\ge I_{\sN_0}+ \wt Z_1^{(1)},\;I_{\sN_0}-\wt Z_1^{(1)}\ge I_{\sN_0}- \wt Z_2^{(1)}.$$
Since the operators $\half(I_{\sN_0}\pm\wt Z_1^{(1)})$, $\half(I_{\sN_0}\pm\wt Z_2^{(1)})$ are orthogonal projections in $\sN_0$,
the equalities
\[
I_{\sN_0}+\wt Z_1^{(2)}=h(I_{\sN_0}+ \wt Z_1^{(1)},I_{\sN_0} + \wt Z_2^{(1)}),\;I_{\sN_0}-\wt Z_2^{(2)}=h(I_{\sN_0}- \wt Z_1^{(1)},I_{\sN_0}- \wt Z_2^{(1)})
\]
and equality \eqref{proinht}
imply that
\[
I_{\sN_0}+\wt Z_1^{(2)}=I_{\sN_0}+\wt Z_1^{(1)},\; I_{\sN_0}-\wt Z_2^{(2)}=I_{\sN_0}-\wt Z_2^{(1)}.
\]
Therefore $\wt Z_1^{(2)}=\wt Z_1^{(1)}$, $\wt Z_2^{(2)}=\wt Z_2^{(1)}$, and by induction
\[
\wt Z_1^{(n)}=\wt Z_1^{(1)},\;\wt Z_2^{(n)}=\wt Z_2^{(1)},\; \wt S_1^{(n)}=\wt S_1^{(1)},\;\wt S_2^{(n)}=\wt S_2^{(1)}\;\forall n\ge 2.
\]
Thus,
$
ah(\wt S_1,\wt S_2)=\left<\wt S_1^{(1)},\wt S_2^{(1)}\right>.
$
By Theorem \ref{ext} the operators $\wt S_1^{(1)},$ $\wt S_2^{(1)}$ are extremal nonnegative selfadjoint extensions of $S$.
\end{proof}
\begin{proposition}\label{jcnfy}
Consider in the Hilbert space $\cL_2(\dR_+)$ the following differential operators
\[
L_cf=-\cfrac{d^2f}{d x^2},\;\left\{\begin{array}{l}\dom L_c=\left\{f\in W_2^2(\dR_+),\; f'(0)=c f(0)\right\},\; c>0,\; c\ne \infty,\\[2mm]
\dom L_0=\left\{f\in W_2^2(\dR_+),\; f'(0)=0\right\},\\[2mm]
\dom L_\infty=\left\{f\in W_2^2(\dR_+),\; f(0)=0\right\},
\end{array}\right.
\]
where $W_2^2(\dR_+)$ is the Sobolev space.
Then
\[\begin{array}{l}
\half(L_c\dot+L_d)=L_{\half(c+d)},\;c_0(L_c,L_d)=L_{c_0(c,d)},\; h(L_c, L_d)=L_{h(c,d)}=L_{\frac{2 c d}{c+d}}, \;ah(L_c,L_d)=L_{\sqrt{cd}},\\[2mm]
\half(L_c\dot+L_\infty)=L_{\infty},\;  \;c_0(L_c,L_\infty)=L_{2c+1},\;
 h(L_c, L_\infty)=L_{2c},\;ah(L_c,L_\infty)=L_{\infty},\; c> 0\\[2mm]
\half(L_c\dot+L_0)=L_{c/2},\;c_0(L_c, L_0)=L_{\frac{c}{2+c}},\;
 h(L_c, L_0)=L_{0}, \;ah(L_c,L_0)=L_{0} ,\;c> 0\\[2mm]
\half(L_0\dot+L_\infty)=L_{\infty},\; c_0(L_0, L_\infty)=L_1,\;
 h(L_0, L_\infty)=L_{0}, \;ah(L_\infty,L_0)=\left<L_{\infty}, L_0\right>.
  \end{array}
\]
\end{proposition}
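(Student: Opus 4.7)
\smallskip\noindent\textbf{Proof plan.}
The plan is to identify every $L_c$ as a nonnegative selfadjoint extension of a common nonnegative symmetric operator with one-dimensional deficiency, to compute the associated Kre\u\i n parameter $z(c)\in[-1,1]$ explicitly, and then to reduce each of the four identities to a scalar identity in $z$ by invoking Theorem \ref{ext}.

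First I would take $S$ to be the closed nonnegative symmetric operator $Sf=-f''$ on $\dom S=\{f\in W_2^2(\dR_+):f(0)=f'(0)=0\}$. A standard computation shows that $S$ has deficiency indices $(1,1)$ with deficiency subspace at $-1$ spanned by $e(x):=e^{-x}$. Integration by parts identifies the Friedrichs extension as $L_\infty$ (the closure of the form $(Sf,f)=\int_0^\infty|f'|^2\,dx$ starting from $\dom S$ is $\int_0^\infty|f'|^2\,dx$ on $\{f\in W_2^1(\dR_+):f(0)=0\}$) and the Kre\u\i n extension as $L_0$; the full family of nonnegative selfadjoint extensions of $S$ is $\{L_c:c\in[0,\infty]\}$. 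Explicitly solving $-u''+u=g$ with $u'(0)=cu(0)$, $u\in L_2(\dR_+)$, yields the Green function $G_c(x,y)=\tfrac12 e^{-|x-y|}+\tfrac{1-c}{2(1+c)}e^{-(x+y)}$, and hence the rank-one identity
\[
(L_c+I)^{-1}=(L_\infty+I)^{-1}+\tfrac{1}{1+c}(\,\cdot\,,e)\,e,\qquad c\in[0,\infty].
\]
Consequently $T_c:=\mathfrak C(L_c)=T_\infty+\tfrac{2}{1+c}(\,\cdot\,,e)\,e$, and $Q_M-Q_\mu=T_0-T_\infty=2(\,\cdot\,,e)\,e$ is the orthogonal projection onto the one-dimensional subspace $\sN=\sN_0=\span\{e\}$. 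Comparison with \eqref{krparm} then identifies the Kre\u\i n parameter of $L_c$ as the scalar $z(c):=(1-c)/(1+c)$, i.e.\ the numerical Cayley transform of $c$.

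Next, the arithmetic-mean identity follows from a direct form computation: for $c,d\in[0,\infty)$, $\cD[L_c]\cap\cD[L_d]=W_2^1(\dR_+)$ and $\tfrac12(L_c\dot+L_d)[f]=\int_0^\infty|f'|^2\,dx+\tfrac{c+d}{2}|f(0)|^2$, which is the form of $L_{(c+d)/2}$; for $d=\infty$ the intersection domain shrinks to $\{f(0)=0\}$ and the form becomes that of $L_\infty$. For the harmonic mean and for $c_0$ I would invoke Theorem \ref{ext}(2), which in our one-dimensional situation reduces the operations on $\wt Z$ to scalar operations on $z(c),z(d)\in[-1,1]$. Using $1+z(c)=2/(1+c)$, $1-z(c)=2c/(1+c)$ together with $h(a,b)=2ab/(a+b)$, elementary algebra gives
\[
h(1+z_c,1+z_d)-1=z\!\bigl(\tfrac{c+d}{2}\bigr),\quad 1-h(1-z_c,1-z_d)=z(h(c,d)),\quad \tfrac{z_c+z_d}{2}=z(c_0(c,d)),
\]
which yield the harmonic and $c_0$ identities (and reconfirm the arithmetic one); the boundary cases $c$ or $d\in\{0,\infty\}$ follow by continuity of the scalar expressions.

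Finally, for the arithmetic-harmonic mean I would observe that the iteration \eqref{fhbauf1} applied to $A_0=L_c$, $B_0=L_d$ produces $A_n=L_{c_n}$, $B_n=L_{d_n}$, where $(c_n,d_n)$ is the classical scalar arithmetic-harmonic iteration. For $c,d\in(0,\infty)$ both converge to the geometric mean $\sqrt{cd}$, and the norm-continuous dependence of $(L_k+I)^{-1}$ on $k\in[0,\infty]$ (visible from the rank-one formula above) converts this into strong resolvent convergence $L_{c_n},L_{d_n}\to L_{\sqrt{cd}}$, giving $ah(L_c,L_d)=L_{\sqrt{cd}}$. For $c=\infty$, $d\in(0,\infty)$, the iteration yields $c_n=\infty$ and $d_n=2^n d\to\infty$, so both tend to $L_\infty$; for $c\in(0,\infty)$, $d=0$, $d_n=0$ and $c_n=c/2^n\to 0$, so both tend to $L_0$; and for $c=0$, $d=\infty$, the iteration is stationary at $(L_\infty,L_0)$, so $ah(L_\infty,L_0)=\langle L_\infty,L_0\rangle$, consistently with Proposition \ref{extrah}. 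The main obstacle is bookkeeping rather than difficulty: once $z(c)=(1-c)/(1+c)$ has been identified and Theorem \ref{ext} is invoked, every claim reduces to a scalar identity plus a standard continuity argument.
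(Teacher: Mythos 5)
Your proposal is correct and follows essentially the same route as the paper: realize all $L_c$ as nonnegative selfadjoint extensions of $Sf=-f''$ with the one-dimensional deficiency space spanned by $e^{-x}$, identify the Kre\u{\i}n parameter $\wt z_c=\frac{1-c}{1+c}$ from the rank-one resolvent difference, invoke Theorem \ref{ext} to reduce the arithmetic, harmonic and $c_0$ identities to scalar computations, and obtain $ah(L_c,L_d)=L_{\sqrt{cd}}$ from the scalar arithmetic--harmonic iteration combined with (norm, hence strong resolvent) continuity of $(L_k+I)^{-1}$ in $k$, with the extremal pair $\left<L_\infty,L_0\right>$ handled via Proposition \ref{extrah}. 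Your explicit Green-function formula and the direct quadratic-form verification of $\half(L_c\dot+L_d)=L_{(c+d)/2}$ are only minor variants of the paper's computation of the resolvents on the deficiency vector.
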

\begin{proof}
The operators $L_c$ are nonnegative selfadjoint  extensions of the nonnegative symmetric operator
\[
Sf=-\cfrac{d^2f}{d x^2},\;\dom S=\left\{f\in W_2^2(\dR_+),\; f(0)=f'(0)=0\right\}.
\]
The operator $S$ has one-dimensional deficiency subspaces. In particular,
\[
\sN=\{\xi \exp(-x),\; \xi\in\dC\}
\]
is the deficiency subspace corresponding to $\lambda=-1$.

One can show that
\[
(L_c+I)^{-1}\exp(-x)=\half\exp(-x)\left(x+\cfrac{1}{c+1}\right),\; x\in\dR_+.
\]
Then
\begin{equation}\label{hfpyjcn}
\begin{array}{l}
\left((L_c+I)^{-1}-(L_\infty+I)^{-1}\right)\exp(-x)=\cfrac{\exp(-x)}{2(c+1)},\\[3mm]
\left((L_0+I)^{-1}-(L_\infty+I)^{-1}\right)\exp(-x)=\frac{1}{2}\exp(-x).
\end{array}
\end{equation}
The operators $L_\infty$ and $L_0$ are the Friedrichs and the Kre\u{\i}n extension of $S$, respectively. Hence as it is shown above
\[
\half(L_0\dot+L_\infty)=L_{\infty},\;
 h(L_0, L_\infty)=L_{0}, \;ah(L_\infty,L_0)=\left<L_{\infty}, L_0\right>.
\]
Set
\[
\wt Q_c=\mathfrak C(L_c),\;  Q_\mu=\mathfrak C(L_\infty),\; Q_M=\mathfrak C(L_0).
\]
From \eqref{rtkbtr} we have
\[
\wt Q_c-Q_\mu=2\left((L_c+I)^{-1}-(L_\infty+I)^{-1}\right),\;Q_M-Q_\mu=2\left((L_0+I)^{-1}-(L_\infty+I)^{-1}\right)
\]
and $\ran (\wt Q_c-Q_\mu)=\ran(Q_M-Q_\mu)=\sN$.
Now from \eqref{krparm} it follows that
\[
2\left((L_c+I)^{-1}-(L_\infty+I)^{-1}\right)=\left(1+\wt z_c\right)\left((L_0+I)^{-1}-(L_\infty+I)^{-1}\right).
\]
Using \eqref{hfpyjcn} we obtain
$$\wt z_c=\cfrac{1-c}{1+c},\; z_0=1, \; z_\infty=-1.$$
Hence
\[
h(1+\wt z_d,1+\wt z_d)-1=\cfrac{1-\half(c+d)}{1+\half(c+d)}, \;1-h(1-\wt z_d,1-\wt z_d)=\cfrac{1-\cfrac{2cd}{c+d}}{1+\cfrac{2cd}{c+d}}
\]
Applying Theorem \ref{ext} and \eqref{chtlytt}, \eqref{chtlyttt}, we get
\[
\begin{array}{l}
\half(L_c\dot+L_d)=L_{\half(c+d)},\; c_0(L_c, L_d)=L_{c_0(c,d)},\; h(L_c, L_d)=L_{h(c,d)}=L_{\frac{2 c d}{c+d}},\\
\half(L_c\dot+L_\infty)=L_{\infty},\;c_0(L_c, L_\infty)=L_{2c+1},\;h(L_c, L_\infty)=L_{2c},\\
\half(L_c\dot+L_0)=L_{c/2},\;c_0(L_c L_0)=L_{\frac{c}{2+c}},\; h(L_c, L_0)=L_{0}.
\end{array}
\]
Set
\[
A_1=\half(L_c\dot+L_d),\; B_1=h(L_c, L_d),\ldots, A_n=\half(A_{n-1}\dot+B_{n-1}),\;B_n=h(A_{n-1}, B_{n-1}),\; n\ge 2,
\]
\[\begin{array}{l}
c_1=\half(c+d),\; d_1=h(c,d)=\cfrac{2cd}{c+d},\ldots, \\
c_n=\half(c_{n-1}+d_{n-1}),\; d_n=h(c_{n-1},d_{n-1})=\cfrac{2c_{n-1}d_{n-1}}{c_{n-1}+d_{n-1}},\; n\ge 2.
\end{array}
\]
\[
\wt z_{c_n}=\frac{1-c_n}{1+c_n},\; \wt z_{d_n}=\frac{1-d_n}{1+d_n},\; n\in\dN.
\]
Then
\[
A_n=L_{c_n},\;B_n=L_{d_n}\;\forall n\in\dN.
\]
Due to Theorem \ref{ext} we have the equalities for the Cayley transforms
\[
\begin{array}{l}
\mathfrak C(A_n)=\half(Q_\mu+Q_M)+\half \wt z_{c_n}(Q_M-Q_\mu),\\[3mm]
\mathfrak C(B_n)=\half(Q_\mu+Q_M)+\half \wt z_{d_n}(Q_M-Q_\mu),\; n\in\dN.
\end{array}
\]
From
\[
\lim\limits_{n\to\infty} c_n=\lim\limits_{n\to\infty}d_n=\sqrt{cd},
\]
it follows that
\[
{\rm s}-\lim\limits_{n\to \infty}\mathfrak C(A_n)={\rm s}-\lim\limits_{n\to \infty}\mathfrak C(B_n)=\half(Q_\mu+Q_M)+\frac{1-\sqrt{cd}}{1+\sqrt{cd}}\,\cfrac{Q_M-Q_\mu}{2}.
\]
Therefore
\[
{\rm s-R}-\lim\limits_{n\to\infty}L_{c_n}={\rm s-R}-\lim\limits_{n\to\infty}L_{d_n}=L_{\sqrt{cd}}.
\]
Thus, $ah(L_c, L_d)=L_{\sqrt{cd}}$ for the case $\left\{c>0,d\in[0,+\infty]\right\}$.
\end{proof}
Observe that the equality
$$\dim \ran \left((S_{\rm K}+I)^{-1}-(S_{\rm F}+I)^{-1}\right)=\dim \left(\cD[S_{\rm K}]/\cD[S_{\rm F}]\right)$$
takes place \cite{Kr}. Besides if
\begin{equation}\label{hfvthy}
\dim \ran \left((S_{\rm K}+I)^{-1}-(S_{\rm F}+I)^{-1}\right)=1,
\end{equation}
then for an arbitrary nonnegative selfadjoint extension $\wt S$ of $S$ from \eqref{rtkbtr} and \eqref{krparm} it follows that
\begin{equation}\label{resajh}
\left(\wt S+I\right)^{-1}=\half\left((S_{\rm K}+I)^{-1}+(S_{\rm F}+I)^{-1}\right)+\cfrac{\wt z}{2}\left((S_{\rm K}+I)^{-1}-(S_{\rm F}+I)^{-1}\right),
\end{equation}
for some $\wt z\in[-1,1].$
\begin{theorem}\label{yfltdct}
Let $S$ be a nonnegative symmetric l.r. such that \eqref{hfvthy} holds.
Suppose that $\wt S_1$ and $\wt S_2$ are two different non-extremal nonnegative selfadjoint extensions of $S.$
Then the arithmetic-harmonic mean $ah(\wt S_1,\wt S_2)$ is singleton. Moreover,
if the numbers $\wt z_1, \wt z_2\in [-1,1]$ correspond to $\wt S_1$ and $\wt S_2$ in the resolvent formula \eqref{resajh}, then
\begin{multline}\label{utjvnh}
\left(ah(\wt S_1,\wt S_2)+I\right)^{-1}=\half\left((S_{\rm K}+I)^{-1}+(S_{\rm F}+I)^{-1}\right)\\
+\cfrac{\wt w}{2}\left((S_{\rm K}+I)^{-1}-(S_{\rm F}+I)^{-1}\right),
\end{multline}
where
\begin{equation}\label{ghtl11}
\wt w=\cfrac{\sqrt{(1+\wt z_1)(1+\wt z_2)}-\sqrt{(1-\wt z_1)(1-\wt z_2)}}{\sqrt{(1+\wt z_1)(1+\wt z_2)}+\sqrt{(1-\wt z_1)(1-\wt z_2)}}.
\end{equation}
In particular,
$ah(S_{\rm F},\wt S)=S_{\rm F}$, when $\wt S\ne S_{\rm K}$ and $ah(S_{\rm K},\wt S)=S_{\rm K}$, when $\wt S\ne S_{\rm F}.$
\begin{proof} Clearly, $\left<\wt S_1,\wt S_2\right>\ne \left<\wt S_{\rm F},\wt S_{\rm K}\right>.$
Define the iterations
\[
\wt S_1^{(0)}:=\wt S_1,\;\wt S_2^{(0)}:=\wt S_2,\; \wt S_1^{(n)}=\half(\wt S_1^{(n-1)}\dot+\wt S_2^{(n-1)}),\;
\wt S_2^{(n)}=h(\wt S_1^{(n-1)}, S_2^{(n-1)}),\; n\in\dN.
\]
Then from \eqref{resajh} and Theorem \ref{ext}
\[
\left(\wt S_j^{(n)}+I\right)^{-1}=\half\left((S_{\rm K}+I)^{-1}+(S_{\rm F}+I)^{-1}\right)+\cfrac{ \wt z_j^{(n)}}{2}\left((S_{\rm K}+I)^{-1}-(S_{\rm F}+I)^{-1}\right),\;
j=1,2,\; n\in\dN,
\]
where $\wt z_1^{(0)}=\wt z_1,$ $\wt z_2^{(0)}=\wt z_2$,
\begin{equation}\label{rekur}
\left\{\begin{array}{l}1+\wt z_1^{(n)}=h(1+\wt z_1^{(n-1)},1+\wt z_2^{(n-1)})=\cfrac{2(1+\wt z_1^{(n-1)})(1+\wt z_2^{(n-1)})}{2+\wt z_1^{(n-1)}+\wt z_2^{(n-1)}},\\
1-\wt z_2^{(n)}=h(1-\wt z_1^{(n-1)},1-\wt z_2^{(n-1)})=\cfrac{2(1-\wt z_1^{(n-1)})(1-\wt z_2^{(n-1)})}{2-\wt z_1^{(n-1)}-\wt z_2^{(n-1)}}.
\end{array}
\right.
\end{equation}
Besides, $\{\wt z_1^{(n)}\}$ and  $\{\wt z_2^{(n)}\}$ are  non-decreasing and non-increasing sequences, respectively, and $\wt z_1^{(n)}\le \wt z_2^{(n)}$ $\forall n\in\dN$.
Hence the sequences $\{\wt z_1^{(n)}\}$ and  $\{\wt z_2^{(n)}\}$ converge. Set
\[
\wt w_j=\lim\limits_{n\to \infty}\wt z_j^{(n)}, \; j=1,2.
\]
Note that because $\left<\wt S_1,\wt S_2\right>\ne \left<\wt S_{\rm F},\wt S_{\rm K}\right>$ we have $\wt z_1\wt z_2\ne -1$.

If $\wt z_1=-1$, $\wt z_2\ne 1$, then from \eqref{rekur} one has $\wt z_1^{(n)}=-1$ for all $n\in\dN$ and therefore $\wt w_1=\wt w_2=-1$.
If $ \wt z_2= 1$, $\wt z_1\ne-1$, then $\wt w_1=\wt w_2=1$.

Suppose $\wt z_1\ne -1,$ $\wt z_2\ne -1$. Then \eqref{rekur} yields $\wt w_1=\wt w_2:=\wt w$. Let us find $\wt w$.
Set
\[
a_n:=\mathfrak C(\wt z_1^{(n)})=\cfrac{1-\wt z_1^{(n)}}{1+\wt z_1^{(n)}},\; b_n:=\mathfrak C(\wt z_2^{(n)})=\cfrac{1-\wt z_2^{(n)}}{1+\wt z_n^{(n)}},\; n\in\dN_0.
\]
Then, using \eqref{EQ11} and \eqref{EQ22}, we obtain
\[
a_n=\half(a_{n-1}+b_{n-1}),\; b_n=h(a_{n-1}, b_{n-1})=\cfrac{2a_{n-1} b_{n-1}}{a_{n-1}+ b_{n-1}},\; n\in\dN_0.
\]
Hence,
\[
\lim\limits_{n\to\infty} a_n=\lim\limits_{n\to\infty} b_n=\sqrt{a_0 b_0}=\sqrt{\cfrac{1-\wt z_1}{1+\wt z_1}\,\cfrac{1-\wt z_2}{1+\wt z_2}}.
\]
Thus, we arrive at \eqref{utjvnh}--\eqref{ghtl11}.
\end{proof}

\end{theorem}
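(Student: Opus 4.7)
The plan is to reduce the entire problem to a scalar arithmetic--harmonic mean iteration by exploiting the one-dimensionality assumption \eqref{hfvthy}. Under this assumption, the operator interval $[Q_\mu, Q_M]$ of selfadjoint contractive extensions of $Q = {\mathfrak C}(S)$ is parametrized by a single scalar $\wt z \in [-1,1]$, so every nonnegative selfadjoint extension of $S$ has a resolvent of the form \eqref{resajh}. The core observation is that by Theorem \ref{ext}, the parameter $\wt z$ transforms functorially under arithmetic and harmonic means: if $\wt S_j$ corresponds to $\wt z_j$, then $\half(\wt S_1 \dot+ \wt S_2)$ and $h(\wt S_1, \wt S_2)$ correspond respectively to $h(1+\wt z_1,1+\wt z_2) - 1$ and $1 - h(1-\wt z_1,1-\wt z_2)$ (where these are now scalar harmonic means). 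Thus the pair of sequences $\{\wt S_1^{(n)}\}, \{\wt S_2^{(n)}\}$ collapses to a pair of scalar sequences $\{\wt z_1^{(n)}\}, \{\wt z_2^{(n)}\}$ in $[-1,1]$ satisfying \eqref{rekur}.

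Next I would verify the monotonicity $\wt z_1^{(n)} \nearrow$, $\wt z_2^{(n)} \searrow$ (a direct consequence of the inequality $h(A,B) \le \half(A\dot+B)$ specialized to the pair $I+\wt z_j^{(n)}$, together with $\wt S_1^{(n)} \ge \wt S_2^{(n)}$), so both scalar sequences converge. The heart of the matter is to show that the two limits coincide, at which point the singleton nature of $ah(\wt S_1, \wt S_2)$ follows immediately, and the common limit determines the resolvent formula \eqref{utjvnh}. For this, I would perform the Cayley-type substitution $a_n := (1-\wt z_1^{(n)})/(1+\wt z_1^{(n)})$, $b_n := (1-\wt z_2^{(n)})/(1+\wt z_2^{(n)})$, which converts the recursion \eqref{rekur} into the classical arithmetic--harmonic iteration $a_n = \half(a_{n-1}+b_{n-1})$, $b_n = 2a_{n-1}b_{n-1}/(a_{n-1}+b_{n-1})$ on positive reals. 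This classical iteration has the well-known common limit $\sqrt{a_0 b_0}$, so undoing the substitution yields $\wt w_1 = \wt w_2 = \wt w$ with $\wt w$ given by \eqref{ghtl11}.

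The main obstacle is the degenerate boundary behavior when one or both initial parameters lie on $\{-1, 1\}$, i.e., when $\wt S_j$ coincides with $S_{\rm F}$ or $S_{\rm K}$. In these cases the substitution $a_n = (1-\wt z)/(1+\wt z)$ sends the parameter to $0$ or $+\infty$ and the transformation is not well defined, so the classical argument must be replaced by a direct inspection of \eqref{rekur}. Concretely, if $\wt z_1 = -1$ (so $\wt S_1 = S_{\rm F}$) and $\wt z_2 \ne 1$, the first recursion freezes $\wt z_1^{(n)} \equiv -1$, and the second then forces $\wt z_2^{(n)} \to -1$; symmetrically for $\wt z_2 = 1$. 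The excluded pair $\wt z_1 = -1, \wt z_2 = 1$ is precisely $\{\wt S_1,\wt S_2\} = \{S_{\rm F}, S_{\rm K}\}$, which is already covered by Proposition \ref{extrah}. Collecting these cases gives the two special equalities $ah(S_{\rm F}, \wt S) = S_{\rm F}$ (for $\wt S \ne S_{\rm K}$) and $ah(S_{\rm K}, \wt S) = S_{\rm K}$ (for $\wt S \ne S_{\rm F}$), consistent with the formula \eqref{ghtl11} evaluated at $\wt z_1 = \pm 1$.

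Finally, I would justify the strong resolvent convergence ${\rm s\mbox{-}R\mbox{-}}\lim \wt S_j^{(n)} = ah(\wt S_1,\wt S_2)$ by combining the scalar convergence $\wt z_j^{(n)} \to \wt w$ with the explicit one-dimensional resolvent formula \eqref{resajh}: norm convergence of the scalar coefficient against the fixed rank-one operator $(S_{\rm K}+I)^{-1}-(S_{\rm F}+I)^{-1}$ yields operator norm (hence strong) convergence of the resolvents $(\wt S_j^{(n)}+I)^{-1}$, which is the same as strong resolvent convergence of the $\wt S_j^{(n)}$. This furnishes \eqref{utjvnh} and completes the argument.
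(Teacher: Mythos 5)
Your proposal is correct and follows essentially the same route as the paper: reduction to the scalar recursion \eqref{rekur} via Theorem \ref{ext} and the rank-one formula \eqref{resajh}, monotonicity and convergence of $\{\wt z_j^{(n)}\}$, separate treatment of the boundary cases $\wt z_1=-1$ or $\wt z_2=1$, and the substitution $a_n=(1-\wt z_1^{(n)})/(1+\wt z_1^{(n)})$, $b_n=(1-\wt z_2^{(n)})/(1+\wt z_2^{(n)})$ turning the iteration into the classical arithmetic--harmonic mean with limit $\sqrt{a_0b_0}$, which gives \eqref{ghtl11}. Your closing remark that scalar convergence against the fixed rank-one operator yields (norm, hence strong) resolvent convergence only makes explicit a step the paper leaves implicit.
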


\end{document}